\theoremstyle{plain}
\newtheorem{theorem}{Theorem}[section]
\newtheorem{lemma}[theorem]{Lemma}
\newtheorem{proposition}[theorem]{Proposition}
\newtheorem{corollary}[theorem]{Corollary}
\newtheorem{problem}[theorem]{Problem}
\theoremstyle{definition}
\newtheorem{definition}[theorem]{Definition}
\theoremstyle{remark}
\newtheorem*{remark}{Remark}
\newcommand{\RR}{\mathbb{R}}
\newcommand{\Lag}{L}
\renewcommand{\SS}{\mathbb{S}}
\newcommand{\id}{I}
\newcommand{\ball}[2]{\mathbf{B}_{\mkern-1mu#2\mkern-1mu}(#1)}
\newcommand{\eqsdp}{$(\overline{\mathit{SDP}})$\xspace}
\newcommand{\eqls}{$(\overline{\mathit{SDP}}_{\!\mathit{ls}\!})$\xspace}
\DeclareMathOperator{\rank}{rank}
\DeclareMathOperator{\codim}{codim}
\DeclareMathOperator{\image}{Im}
\DeclareMathOperator{\dist}{dist}
\DeclareMathOperator{\tube}{tube}
\DeclareMathOperator{\poly}{poly}
\DeclareMathOperator{\mineig}{\min\mkern1mueig}
\begin{document}

\title[Polynomial time guarantees for the Burer-Monteiro method]
{Polynomial time guarantees for the Burer-Monteiro method}

\author{Diego Cifuentes}
\address{Massachusetts Institute of Technology \\ Cambridge, MA, USA}
\email{diegcif@mit.edu}

\author{Ankur Moitra}
\address{Massachusetts Institute of Technology \\ Cambridge, MA, USA}
\email{moitra@mit.edu}

\keywords{Semidefinite programming, Burer-Monteiro, Low rank factorization}

\begin{abstract}
  The Burer-Monteiro method is one of the most widely used techniques for solving large-scale semidefinite programs (SDP).
  The basic idea is to solve a nonconvex program in $Y$,
  where $Y$ is an $n \!\times\! p$ matrix such that $X \!=\! Y Y^{T\!}$.
  In this paper, we show that this method can solve SDPs in polynomial time in a smoothed analysis setting.
  More precisely, we consider an SDP whose domain satisfies some compactness and smoothness assumptions,
  and slightly perturb the cost matrix and the constraints.
  We show that if $p \gtrsim \sqrt{2(1{+}\eta)m}$,
  where $m$ is the number of constraints and $\eta\!>\!0$ is any fixed constant,
  then the Burer-Monteiro method can solve SDPs to any desired accuracy in polynomial time, in the setting of smooth analysis.
  Our bound on~$p$ approaches the celebrated Barvinok-Pataki bound in the limit as $\eta$ goes to zero,
  beneath which it is known that the nonconvex program can be suboptimal.

  Previous analyses were unable to give polynomial time guarantees for the Burer-Monteiro method,
  since they either assumed that the criticality conditions are satisfied exactly,
  or ignored the nontrivial problem of computing an approximately feasible solution.
  We address the first problem through a novel connection with tubular neighborhoods of algebraic varieties.
  For the feasibility problem we consider a least squares formulation,
  and provide the first guarantees that do not rely on the restricted isometry property.
\end{abstract}

\maketitle

\section{Introduction} \label{sec:introduction}

Consider a \emph{semidefinite program} (SDP) in the space of $n{\times} n$ symmetric matrices~$\SS^n$, involving $m$ equality constraints:
\begin{equation} \label{eq:sdp}
  \tag{\textit{SDP}}
  \begin{gathered}
    \min_{X\in \SS^n}\; C\bullet X
    \quad\text{ such that }\quad
    \mathcal{A}(X) = b, \quad
    X \succeq 0,
  \end{gathered}
\end{equation}
where $C\!\in\! \SS^n$, $b\!\in\!\RR^m$ and $\mathcal{A}:\SS^n\!\to\!\RR^m$, $X \mapsto (A_1\!\bullet\! X,\dots, A_m\!\bullet\! X)$ is a linear map.
Though interior point methods can solve~\eqref{eq:sdp} in polynomial time,
they typically run into memory problems for large values of~$n$.
The Burer-Monteiro method~\cite{Burer2003,Burer2005} is one of the most widely used procedures for large scale problems.
Several papers have worked in understanding the practical success of this method, see e.g.,~\cite{Burer2005,Boumal2016,Boumal2018}.
Although several results have been shown,
they all fall short of showing that one can reach an approximately optimal solution of~\eqref{eq:sdp} in polynomial time.
In this paper we prove the first polynomial time guarantees for the Burer-Monteiro method,
under a compactness and smoothness assumption on the domain.

The Burer-Monteiro method consists in writing $X\!=\! Y Y^T$ for some $Y\in \RR^{n\times p}$,
and solving the following nonconvex optimization problem:
\begin{equation} \label{eq:bm}
  \tag{\textit{BM}}
  \begin{gathered}
    \min_{Y\in \mathfrak{M}}\;  C\bullet Y Y^T,
    \quad\text{ where }\quad
    \mathfrak{M} := \{ Y \in \RR^{n\times p} : \mathcal{A}(Y Y^T) = b \}.
  \end{gathered}
\end{equation}
Let $\tau(k) := \binom{k+1}{2}$ be the $k$-th triangular number.
It is known that problems~\eqref{eq:sdp} and~\eqref{eq:bm} have the same optimal value for any $p$ such that $\tau(p) \!\geq\! m$;
this is known as the \emph{Barvinok-Pataki bound}~\cite{Barvinok1995problems,Pataki1998}.
But due to nonconvexity, local optimization methods may converge to a critical point of~\eqref{eq:bm} which is not globally optimal~\cite{Waldspurger2018}.
In this paper we are mainly interested in 2nd-order critical points (abbreviated: 2-critical points).

Boumal et~al.~\cite{Boumal2016,Boumal2018} showed that~\eqref{eq:bm} has no spurious 2-critical points when $\tau(p) \!>\! m$,
assuming that the feasible set~$\mathfrak{M}$ is a smooth manifold and that the cost matrix~$C$ is \emph{generic}.
The result of Boumal et~al.\ gives a strong indication that,
for any $p$ above the Barvinok-Pataki bound,
a local optimization method for~\eqref{eq:bm} should lead to the global optimal of~\eqref{eq:sdp}.
However, there are two serious technical obstacles to derive polynomial time guarantees.

The first obstacle is that numerical algorithms must be terminated after finitely many iterations,
and hence the criticality conditions are not satisfied exactly.
This issue has been addressed by Pumir et~al.~\cite{Pumir2018},
though for values of~$p$ larger than the Barvinok-Pataki bound.
They introduce a smoothed analysis~\cite{Spielman2004} setting
in which the cost matrix is subject to a small random perturbation of magnitude~$\sigma$.
The purpose of this perturbation is to introduce genericity to the problem.
They then defined a notion of
\emph{exactly feasible approximately 2-critical} (EFAC) point,
and showed that any EFAC point of~\eqref{eq:bm} is also approximately optimal for~\eqref{eq:sdp} when
$\tau(p) \gtrsim \frac{9}{2}{m\log(\Omega(\!\sqrt{n}/\sigma))}$
with high probability.
Note that this bound gets worse when the perturbation magnitude~$\sigma$ decreases.

In this paper, we consider the same smoothed analysis setting (the cost $C$ is perturbed),
and we improve upon~\cite{Pumir2018} by matching the Barvinok-Pataki bound.
To do so, we provide a deterministic characterization of the spurious EFAC points of~\eqref{eq:bm}
in terms of tubular neighborhoods around algebraic varieties.
This deterministic characterization,
given in \Cref{thm:sufficient},
is of independent interest.
By using effective bounds for the volume of such tubular neighborhoods~\cite{Burgisser2008,Lotz2015,Gray2012,Basu2021}, we derive the following theorem.

\begin{theorem}[critical $\Rightarrow$ optimal $|$ Informal] \label{thm:informalQ1}
  Consider this setting:
  \begin{itemize}
    \item
      The rank~$p$ satisfies $\tau(p) > m$.
    \item
      Apply a perturbation of magnitude $\sigma$ to the cost matrix~$C$.
  \end{itemize}
  Then, with high probability, any EFAC point for~\eqref{eq:bm} with bounded norm is also approximately optimal for~\eqref{eq:sdp},
  provided that the criticality precision is sufficiently small.
  \emph{The precise statement appears in \Cref{thm:main}.}
\end{theorem}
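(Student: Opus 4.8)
The plan is to deduce the statement from a \emph{deterministic} structural result---the characterization in \Cref{thm:sufficient}, which locates every spurious EFAC point inside a tubular neighborhood of a fixed algebraic variety---together with a volume estimate for that tube under the random perturbation. The first step is to reduce global optimality to \emph{approximate dual feasibility of the slack}. Let $Y$ be an EFAC point with $\|Y\|_F\le B$ and criticality precision $\varepsilon$, and write $C':=C+\sigma G$. The first-order approximate-criticality condition produces an approximate Lagrange multiplier $\lambda\in\RR^{m}$ such that the slack $S:=C'-\mathcal A^{*}(\lambda)$ satisfies $\|SY\|_F\lesssim\varepsilon$; moreover $\|\lambda\|$ stays bounded because the smoothness assumption makes the constraint map nondegenerate on $\mathfrak M$, so the linear system defining $\lambda$ is well conditioned over $\{\|Y\|_F\le B\}$. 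The second-order condition gives $\langle S\dot Y,\dot Y\rangle\ge-\varepsilon$ for every unit $\dot Y\in T_Y\mathfrak M$. Assume, in addition, that $\mineig(S)\ge-\varepsilon'$. Then $C'\bullet YY^{T}=b^{T}\lambda+\langle SY,Y\rangle\le b^{T}\lambda+\|SY\|_F\|Y\|_F$, while for every feasible $X$ of~\eqref{eq:sdp} one has $C'\bullet X=b^{T}\lambda+S\bullet X\ge b^{T}\lambda+\mineig(S)\operatorname{tr}(X)\ge b^{T}\lambda-\varepsilon'R$, using that compactness of the feasible set bounds $\operatorname{tr}(X)\le R$ there; hence $\mathrm{OPT}\ge b^{T}\lambda-\varepsilon'R$ and
\begin{equation*}
  C'\bullet YY^{T}\;\le\;\mathrm{OPT}+\varepsilon'R+O(\varepsilon B),
\end{equation*}
so $Y$ is approximately optimal. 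Everything therefore reduces to showing that, for the perturbed cost, the slack at every bounded-norm EFAC point is almost positive semidefinite.

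The heart of the matter---and the step I expect to be the main obstacle---is this last reduction, i.e.\ the deterministic characterization \Cref{thm:sufficient}, which is also where the hypothesis $\tau(p)>m$ enters. I would write down the polynomial system describing a \emph{full-column-rank} spurious $2$-critical configuration: feasibility $\mathcal A(YY^{T})=b$, stationarity $SY=0$ with $S=C'-\mathcal A^{*}(\lambda)$, and a witness $(v,\mu)$ of indefiniteness of $S$ compatible with $2$-criticality; eliminating $Y,\lambda,v,\mu$ and projecting to the cost space $\SS^{n}$ produces a real algebraic variety $\Sigma\subseteq\SS^{n}$ of degree $\poly(n)$. The role of $\tau(p)>m$ is exactly to force $\codim_{\SS^{n}}\Sigma\ge1$: a full-rank $Y$ makes $S$ vanish on the $p$-dimensional space $\operatorname{col}(Y)$, i.e.\ satisfy $\tau(p)$ independent linear conditions, and modulo the at most $m$-dimensional family $\{\mathcal A^{*}(\lambda)\}$ this leaves no freedom for an indefinite $2$-critical-compatible slack once $\tau(p)>m$ (when $\tau(p)\le m$ the conclusion genuinely fails, cf.~\cite{Waldspurger2018}). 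Turning this into a usable statement requires an effective implicit-function / \L ojasiewicz-type inequality, made uniform over the compact set $\{\|Y\|_F\le B\}$: if $\dist(C',\Sigma)\ge\rho$, then any EFAC point with $\|Y\|_F\le B$ and criticality precision $\varepsilon$ sufficiently small (in terms of $\rho,n,B$) satisfies $\mineig(S)\ge-\varepsilon'$ for an $\varepsilon'$ that shrinks with $\varepsilon$. Obtaining this uniformity together with explicit polynomial dependence---essentially quantifying the transversality argument of Boumal et~al.---is the delicate part.

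Finally the probabilistic step. Since $\sigma G$ is a $\sigma$-scaled standard Gaussian in $\SS^{n}$, a space of dimension $\tau(n)$, the volume-of-tubes bounds of~\cite{Burgisser2008,Lotz2015,Gray2012} give
\begin{equation*}
  \Pr\bigl[\dist(C+\sigma G,\Sigma)<\rho\bigr]\;\le\;O\bigl((\rho/\sigma)^{\codim\Sigma}\,\deg(\Sigma)\,\poly(n)\bigr).
\end{equation*}
Because $\codim\Sigma\ge1$, first choosing $\rho$ small relative to $\sigma$ and then the criticality precision $\varepsilon$ small relative to $\rho$ makes this probability smaller than any prescribed failure probability while keeping $\varepsilon'R+O(\varepsilon B)$ below the target accuracy. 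On the complementary event the first two steps apply and every EFAC point of~\eqref{eq:bm} with bounded norm is approximately optimal for~\eqref{eq:sdp}, which is the assertion; the full bookkeeping of the parameters is what \Cref{thm:main} records.
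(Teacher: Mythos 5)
Your outer scaffolding is right---reduce approximate optimality to an approximate dual feasibility statement $S(\lambda)\succeq-\varepsilon_2 I$ (this is essentially \Cref{thm:approxoptimal}), and finish with a volume-of-tubes bound applied to a variety of bad cost matrices. But the central deterministic step is left as an acknowledged gap, and the mechanism you propose to fill it is not the one that works. You suggest eliminating $(Y,\lambda,v,\mu)$ from the polynomial system describing a spurious critical configuration to obtain a variety $\Sigma\subseteq\SS^n$ of degree $\poly(n)$, and then invoking an effective implicit-function or \L ojasiewicz-type inequality, uniform over $\{\|Y\|\le B\}$, to convert $\dist(C',\Sigma)\ge\rho$ into $\mineig(S)\ge-\varepsilon'$. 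Neither half of this is available with polynomial constants: elimination ideals of such systems do not come with $\poly(n)$ degree bounds for free, and \L ojasiewicz exponents are in general not polynomially controlled, which is precisely why earlier analyses could not give polynomial-time guarantees.

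The missing idea is \Cref{thm:firstpart}: a dichotomy on the smallest singular value $\sigma_p(Y)$. If $\sigma_p(Y)\le\gamma/\|\mathcal A\|$, take the unit vector $z$ with $\|Yz\|=\sigma_p(Y)$ and test the second-order condition on $U=uz^T$ for arbitrary unit $u$; this \emph{directly} certifies $u^TS(\lambda)u\ge-\varepsilon_2$, so the point is already approximately optimal and cannot be spurious. If instead $\sigma_p(Y)>\gamma/\|\mathcal A\|$, the first-order condition $\|S(\lambda)Y\|\le\varepsilon_1$ forces
\begin{equation*}
  \dist\bigl(S(\lambda),\,\SS^n_{n-p}\bigr)\;\le\;\|S(\lambda)Y\|/\sigma_p(Y)\;<\;\delta:=\varepsilon_1\|\mathcal A\|/\gamma,
\end{equation*}
so $C=S(\lambda)+\mathcal A^*(\lambda)$ lies in $\tube_\delta(\SS^n_{n-p})+\mathcal A^*(B_\lambda)$. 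No elimination or \L ojasiewicz inequality is needed: the variety is the \emph{explicit} determinantal variety $\SS^n_{n-p}$, with known codimension $\tau(p)$ and degree $n-p+1$, translated by the multiplier image. The paper then covers $B_\lambda$ by an $\epsilon$-net of size $(3\kappa/\delta)^m$ and applies the tube bound of \Cref{thm:tube} to each translate; the net cost is exactly why the final probability scales as $\delta^{\tau(p)-m}$ and why $\tau(p)>m$ is the right threshold---your ``$\codim\Sigma\ge1$'' hides this quantitative bookkeeping. (Two smaller points: the perturbation is uniform on a Frobenius ball, not Gaussian, and the bound $\|\lambda\|\le R_\lambda$ is \emph{assumed} in the definition of a spurious pair rather than derived from conditioning of the constraint map, which would require LICQ that \Cref{thm:main} does not impose.)
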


The second obstacle we need to overcome
is the efficient computation of EFAC points.
Feasibility is the main impediment.
Indeed, the set $\mathfrak{M}$ is defined by quadratic equations, and solving quadratics is NP-hard.
Hence, finding EFAC points is computationally intractable.
To address this issue,
we relax the feasibility requirement and define a notion of
\emph{approximately feasible approximately 2-critical} (AFAC) point.
We show that \Cref{thm:informalQ1} remains valid for AFAC points.
Importantly, AFAC points can be computed in polynomial time (see \Cref{thm:complexity}),
provided that an approximately feasible solution is known.
This leads to the following theorem.

\begin{theorem}[Polytime optimality $|$ Informal] \label{thm:informalQ2a}
  Consider this setting:
  \begin{itemize}
    \item
      The rank~$p$ satisfies $\tau(p) > (1{+}\eta)m$,
      for a fixed constant~$\eta>0$.
    \item
      Apply a perturbation of magnitude $\sigma$ to the cost matrix~$C$.
    \item
      Assume that $\mathfrak{M}$ is compact and smooth (LICQ holds).
    \item
      Let $Y_0$ be an approximately feasible point,
      i.e., $Y_0$ is close to~$\mathfrak{M}$.
    \item
      Solve~\eqref{eq:bm} using a constrained optimization method with 2nd-order guarantees (e.g.,~\Cref{thm:complexity}) initialized at~$Y_0$.
  \end{itemize}
  Then, after $\poly(n,\sigma^{-1})$ iterations,
  the algorithm produces with high probability a point $Y$ such that
  $Y Y^T$ is approximately optimal for~\eqref{eq:sdp}.
  \emph{The precise statement appears in \Cref{thm:complete}.}
\end{theorem}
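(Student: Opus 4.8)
The statement is essentially a composition of the three ingredients already assembled, together with careful bookkeeping of the precision parameters, so the plan is modular. First I would reduce to \emph{AFAC} points: as the text notes, the conclusion of \Cref{thm:informalQ1} (precisely, \Cref{thm:main}) remains valid when ``EFAC'' is replaced by ``AFAC'', provided the feasibility residual and the second-order criticality measure are taken small enough as functions of $n$ and $\sigma$. Hence it suffices to produce, in $\poly(n,\sigma^{-1})$ iterations, a point $Y$ that is AFAC with tolerance $\varepsilon$ and has $\|Y\|$ bounded by the absolute constant required there, and then invoke that theorem to conclude that $YY^T$ is approximately optimal for~\eqref{eq:sdp}.

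Second, I would run the constrained optimization method of \Cref{thm:complexity} starting from~$Y_0$. This method is engineered to drive both $\|\mathcal{A}(YY^T)-b\|$ and the second-order criticality measure below a prescribed tolerance~$\varepsilon$ in $\poly(1/\varepsilon)$ iterations, times a $\poly(n)$ factor and polynomials in problem-dependent constants that are $O(1)$ under the compactness and LICQ assumptions on~$\mathfrak{M}$. Because $\mathfrak{M}$ is compact and the method monotonically decreases a merit function that is coercive on a fixed neighborhood of~$\mathfrak{M}$ (within which the iterates remain, having started at the approximately feasible point~$Y_0$), the whole trajectory — and in particular the output~$Y$ — stays in a bounded set, so the ``bounded norm'' hypothesis of \Cref{thm:main} is automatically met.

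Third — and this is the only genuinely delicate point — I would calibrate~$\varepsilon$ using the tubular-neighborhood volume bounds. \Cref{thm:sufficient} identifies the set of spurious bounded-norm AFAC points of tolerance~$\varepsilon$ with a tube of radius $O(\varepsilon)$ around an algebraic variety whose codimension is controlled by $\tau(p)-m$; by hypothesis $\tau(p)-m > \eta m$ with $\eta>0$ fixed. The effective bounds of~\cite{Burgisser2008,Lotz2015,Gray2012} then give that the measure of perturbed cost matrices for which such a spurious point exists is at most $\poly(n)\,(\varepsilon/\sigma)^{\Omega(\eta m)}$ — polynomially small once $\varepsilon \le \sigma/\poly(n)$, with no $\log(1/\sigma)$ loss of the kind incurred in~\cite{Pumir2018}. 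Fixing $\varepsilon = \sigma/\poly(n)$ therefore makes the failure probability $o(1)$, and Step~2 then terminates after $\poly(1/\varepsilon) = \poly(n,\sigma^{-1})$ iterations, which yields the theorem. (For the precise version, \Cref{thm:complete}, one additionally supplies~$Y_0$ itself by minimizing the least-squares feasibility residual $\|\mathcal{A}(YY^T)-b\|^2$ and arguing — again via a non-degeneracy/tube argument rather than RIP — that an approximately critical point of this objective is approximately feasible.)

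\textbf{Main obstacle.}
The crux is Step~3: showing that a merely \emph{inverse-polynomial} criticality tolerance suffices for the optimality conclusion to hold with high probability. This is exactly where the strict inequality $\tau(p) > (1{+}\eta)m$, rather than the bare Barvinok–Pataki bound $\tau(p) > m$, is essential — it endows the spurious variety with enough codimension that the volume of its tube decays like a \emph{fixed positive power} of $\varepsilon/\sigma$, which is what converts ``the perturbation is small'' into ``$\varepsilon$ may be taken polynomially small'' and hence ``polynomially many iterations suffice''. A secondary obstacle is the bookkeeping that couples the feasibility and criticality tolerances in the AFAC definition so that the output guarantees of \Cref{thm:complexity} line up exactly with the hypotheses of the AFAC version of \Cref{thm:main}; this requires tracking how the constants in \Cref{thm:main} depend on the Lipschitz/conditioning data of~$\mathfrak{M}$, all of which stay $O(1)$ under compactness and LICQ.
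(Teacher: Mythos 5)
Your plan follows the same route as the paper's proof of \Cref{thm:complete}: run the constrained solver of \Cref{thm:complexity} from the approximately feasible start $Y_0$ to obtain an AFAC pair $(Y,\lambda)$ of bounded norm, invoke \Cref{thm:sufficient} to place the cost matrices admitting a spurious bounded-norm AFAC pair inside a tube around $\SS^n_{n-p}+\mathcal{A}^*(B_\lambda)$, and then use the tube-volume estimate of \Cref{thm:tube} to show that the perturbed $C$ avoids this tube with high probability. The role you assign to the hypothesis $\tau(p)>(1+\eta)m$ --- supplying enough codimension so that an inverse-polynomial criticality tolerance $\varepsilon$ suffices --- is exactly the role it plays in \Cref{thm:main2} and \Cref{thm:complete}.

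There is, however, a bookkeeping error in your Step 3 that would break the proof if carried out literally. You claim the measure of bad cost matrices is ``at most $\poly(n)\,(\varepsilon/\sigma)^{\Omega(\eta m)}$, polynomially small once $\varepsilon\le\sigma/\poly(n)$.'' That is not the shape of the bound one actually gets. Combining the $\epsilon$-net over the multiplier ball $B_\lambda$ (of size $(3\kappa/\delta)^m$) with the tube-volume bound for $\SS^n_{n-p}$ (codimension $\tau(p)$, degree $n-p+1$) gives
\begin{align*}
\Pr[\,C\in\mathscr{C}_{\boldsymbol\varepsilon,\gamma}\,]\;\le\;8e\,\delta^{\tau(p)-m}\,(3\kappa)^m\,\bigl(2en^3/\sigma\bigr)^{\tau(p)},
\end{align*}
which is \Cref{thm:main}. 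The prefactor $(3\kappa)^m(2en^3/\sigma)^{\tau(p)}$ is \emph{not} $\poly(n)$; in particular it carries a $\sigma^{-\tau(p)}$ dependence (and $\tau(p)\ge m$), so taking $\delta\sim\varepsilon\sim\sigma/\poly(n)$ leaves an uncancelled $\sigma^{-m}$ in the bound and the failure probability does not go to zero as $\sigma\to 0$. The fix is exactly what \Cref{thm:main2} and \Cref{thm:complete} do: calibrate
$\delta\lesssim (1/3\kappa)^{1/\eta}(\sigma/2en^3)^{1+1/\eta}$,
i.e.\ $\varepsilon\sim(\sigma/n^3)^{1+1/\eta}$, so that the exponent $\tau(p)-m\ge\eta m$ on $\delta$ exactly compensates the $(n^3/\sigma)^{\tau(p)}$ factor. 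Since $\eta$ is a fixed constant this is still inverse-polynomial in $(n,\sigma^{-1})$, and the resulting iteration count $O(\epsilon^{-6})$ remains $\poly(n,\sigma^{-1})$, so your conclusion stands once the exponent on $\sigma/n^3$ is corrected from $1$ to $1+1/\eta$.
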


The above theorem applies to SDPs with compact and smooth domains
for which a feasible solution is known.
Observe that it requires a slightly larger bound for~$p$,
compared to \Cref{thm:informalQ1}.
This is needed to ensure that the criticality precision remains polynomially bounded.
Also note that the perturbation magnitude~$\sigma$ appears in the complexity of the algorithm (as is usual in smooth analysis),
but does not appear in the bound for~$p$.

In order to fully address the computation of AFAC points,
we need to find an approximately feasible solution of~\eqref{eq:sdp}.
To do so,
we consider the {least squares} problem
\begin{equation}\label{eq:ls}
  \tag{${\mathit{SDP}}_{\!\mathit{ls}\!}$}
  \min_{X \in \SS^n} \; \|\mathcal{A}(X)-b\|^2
  \quad\text{ such that }\quad
  X \succeq 0,
\end{equation}
and the corresponding Burer-Monteiro problem
\begin{equation}\label{eq:ls-bm}
  \tag{${\mathit{BM}}_{\!\mathit{ls}\!}$}
  \min_{Y \in \RR^{n\times p}} \; \|\mathcal{A}(Y Y^T)-b\|^2.
\end{equation}
We can analyze problem~\eqref{eq:ls-bm} using similar techniques as for~\eqref{eq:bm}.
We consider a smoothed analysis setting in which the constraint map~$\mathcal{A}$ is subject to a small perturbation.
We will show in \Cref{thm:ls-main} that, for $\tau(p) \!>\! m$, any approximately critical point of~\eqref{eq:ls-bm} is approximately optimal for~\eqref{eq:ls}.
This leads us to the following theorem.

\begin{theorem}[Polytime feasibility $|$ Informal] \label{thm:informalQ2b}
  Consider this setting:
  \begin{itemize}
    \item
      The rank~$p$ satisfies $\tau(p) > (1{+}\eta)m$,
      for a fixed constant~$\eta>0$.
    \item
      Apply a perturbation of magnitude $\sigma$ to the constraint map~$\mathcal{A}$.
    \item
      Assume that the perturbed set $\mathfrak{M}$ is nonempty and compact.
    \item
      Solve~\eqref{eq:ls-bm} using an unconstrained optimization method with 2nd-order guarantees (e.g.,~\cite{Cartis2011,Cartis2012}).
  \end{itemize}
  Then, after $\poly(n,\sigma^{-1})$ iterations,
  the algorithm produces with high probability a point $Y$ such that
  $Y Y^T$ is approximately optimal for~\eqref{eq:ls}.
  \emph{The precise statement appears in \Cref{thm:ls-complete}.}
\end{theorem}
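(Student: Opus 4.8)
The plan is to compose three ingredients. First, since the perturbed set $\mathfrak{M}$ is assumed nonempty, every point of $\mathfrak{M}$ is a global minimizer of \eqref{eq:ls-bm} with objective value $0$, so the optimal value of \eqref{eq:ls} is $0$; consequently a point $Y$ for which $YY^T$ is ``approximately optimal for \eqref{eq:ls}'' is nothing but a point with small residual $\|\mathcal{A}(YY^T)-b\|$, i.e.\ $YY^T$ approximately feasible for \eqref{eq:sdp}. Second, \Cref{thm:ls-main} guarantees (under the $\sigma$-perturbation of $\mathcal{A}$, with high probability, and already for $\tau(p)>m$) that every sufficiently accurate approximately critical point of \eqref{eq:ls-bm} of bounded norm yields $YY^T$ approximately optimal for \eqref{eq:ls}, with an explicit bound on the optimality gap in terms of the criticality precision $\epsilon$, the dimension $n$, the perturbation scale $\sigma$, and the slack $\tau(p)-m$. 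Third, any unconstrained optimization method with second-order guarantees (cubic regularization or trust region, \cite{Cartis2011,Cartis2012}) applied to the smooth objective $Y\mapsto\|\mathcal{A}(YY^T)-b\|^2$ produces, in a number of iterations that is polynomial in $\epsilon^{-1}$, in the initial objective value, and in the gradient- and Hessian-Lipschitz constants along the iterate path, a point that is $\epsilon$-approximately $2$-critical while never increasing the objective. Chaining these: run the solver until it reaches such a point $Y$, apply \Cref{thm:ls-main} to deduce that $YY^T$ is approximately feasible, and choose $\epsilon$ small enough that the guaranteed accuracy matches the target.

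To turn ``polynomial in these auxiliary quantities'' into $\poly(n,\sigma^{-1})$, two things must be controlled. The first is the criticality precision: by \Cref{thm:ls-main}, reaching a target feasibility accuracy forces $\epsilon^{-1}$ to be bounded by a quantity whose implicit constants come from the volume-of-tube estimates (\cite{Burgisser2008,Lotz2015,Gray2012}) behind \Cref{thm:ls-main}, and these degrade as $\tau(p)-m\to 0$. The fixed slack $\tau(p)>(1{+}\eta)m$ is exactly what keeps $\epsilon^{-1}$ polynomially bounded, with all $\eta$-dependence absorbed into constants --- which is why this theorem needs a slightly larger $p$ than \Cref{thm:informalQ1}. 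The second is the Lipschitz constants and the initial objective value entering the solver's complexity: the objective is a quartic in $Y$, so its gradient and Hessian are Lipschitz only on bounded sets, and one must certify that the iterates remain in a bounded region of polynomial radius. Here compactness of $\mathfrak{M}$ is used essentially, since the raw sublevel sets $\{Y:\|\mathcal{A}(YY^T)-b\|\le r\}$ of the least-squares objective need not be bounded in general; one instead controls the region visited by the descent iterates using a bound on $\mathfrak{M}$ (which is part of the compactness hypothesis), so that the relevant Lipschitz constants and the initial objective value are $\poly(n,\sigma^{-1})$ and the composed iteration count is $\poly(n,\sigma^{-1})$. The precise statement in \Cref{thm:ls-complete} makes this dependence explicit.

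Finally, the ``with high probability'' clause is inherited directly from \Cref{thm:ls-main}: the only randomness is in the perturbation of $\mathcal{A}$, and the event on which \Cref{thm:ls-main} holds imposes no conflict with the assumed nonemptiness and compactness of the realized $\mathfrak{M}$. The step I expect to be the main obstacle is the quantitative boundedness just described --- converting the qualitative hypothesis ``$\mathfrak{M}$ is compact'' into a genuinely polynomial bound on the region visited by the algorithm, and hence on the Lipschitz constants, since without it the second-order solver carries no polynomial iteration guarantee. A secondary source of care is the bookkeeping needed to check that every parameter --- $n$, $\sigma^{-1}$, the region's diameter, and the criticality precision furnished by \Cref{thm:ls-main} together with the $\eta$-slack --- composes into a bound that is truly polynomial in $n$ and $\sigma^{-1}$.
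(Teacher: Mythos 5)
Your proposal is correct and follows essentially the same route as the paper: run a second-order unconstrained solver (\Cref{thm:complexityunconstrained}) on the least-squares objective, invoke \Cref{thm:ls-main2} with the $\eta$-slack to keep the required criticality precision $\poly(n,\sigma^{-1})$, and use the zero optimal value of \eqref{eq:ls} to convert approximate optimality into approximate feasibility. The boundedness issue you flag as the main obstacle is resolved in the precise statement (\Cref{thm:ls-complete}) exactly as you anticipate: the paper assumes the sublevel set $\mathfrak{M}_\alpha$ containing the initial point is compact, so the descent iterates stay in it and the Lipschitz constants are controlled.
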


\Cref{thm:informalQ2a,thm:informalQ2b} together provide polynomial time guarantees for the Burer-Monteiro method,
assuming that both $C,\mathcal{A}$ are slightly perturbed
and that the domain is compact and smooth.
Note that in certain applications a feasible point of the SDP is known,
in which case \Cref{thm:informalQ2a} alone is enough
(perturbing $\mathcal{A}$ is not needed).

We point out that problem \eqref{eq:ls} is a matrix sensing problem which is of interest in its own right, independent of its connection to SDP feasibility.
The guarantees from \Cref{thm:informalQ2b} apply even if the optimal value of \eqref{eq:ls} is strictly positive (i.e., there is no $X {\succeq} 0$ with $\mathcal{A}(X) {=} b$).
There are earlier works \cite{Bhojanapalli2016,Li2017,Ge2017} proving global optimality guarantees for the nonconvex problem \eqref{eq:ls-bm},
but they all rely on the restricted isometry property (RIP).
To the best of our knowledge, \Cref{thm:informalQ2b} provides the first global guarantees for \eqref{eq:ls-bm} that do not rely on~RIP.

The structure of this paper is as follows.
In \Cref{s:complexity} we introduce the notion of AFAC points in nonlinear programming,
and we discuss how to compute them in polynomial time.
In \Cref{s:optimality} we prove \Cref{thm:informalQ1}.
In \Cref{s:feasibility} we analyze the least squares problem~\eqref{eq:ls}.
In \Cref{s:complete} we put together the results from the paper,
and show \Cref{thm:informalQ2a,thm:informalQ2b}.
We conclude with some experimental results in \Cref{s:experiments}.

\subsection*{Related work}

The Burer-Monteiro method applied to problems~\eqref{eq:sdp} and~\eqref{eq:ls} has attracted much research in past years.
Several papers have tried to explain the practical success of the method,
as we elaborate now.

For problem~\eqref{eq:sdp}, early work by Burer and Monteiro~\cite{Burer2005} and by Journ\'ee et al.~\cite{Journee2010} gave strong indications that \eqref{eq:bm} has no spurious critical points above the Barvinok-Pataki bound.
A formal proof was given recently by Boumal et al.~\cite{Boumal2016,Bhojanapalli2018}.
Subsequent work by Bhojanapalli et al.~\cite{Bhojanapalli2018} and Pumir et al.~\cite{Pumir2018} investigated the case of approximately critical points in a smoothed analysis setting.
The work in~\cite{Bhojanapalli2018} did not focus on~\eqref{eq:sdp}, but in a penalized version.
The Barvinok-Pataki bound was recently shown to be optimal up to lower order terms for general SDPs~\cite{Waldspurger2018}.
However for structured families of SDPs a smaller rank might suffice~\cite{Bandeira2016,Mei2017}.
As discussed above, previous work has not yet shown polynomial time guarantees for problem~\eqref{eq:bm}.

Problem~\eqref{eq:ls} has been well studied in the matrix sensing community.
Bhojanapalli et al.~\cite{Bhojanapalli2016} showed that~\eqref{eq:ls-bm} has no spurious local minima under the RIP condition,
and they also provided polynomial time guarantees.
Similar results have been derived later, e.g.,~\cite{Li2017,Ge2017}.
Note that RIP is a very strong assumption about the condition number of the linear map~$\mathcal{A}$,
particularly since the RIP constant needs to be small~\cite{Zhang2018}.
In contrast, our result in \Cref{thm:informalQ2b} simply assumes a small perturbation around a worst-case instance~${\mathcal{A}}$.
This is a nondegeneracy condition, much weaker than RIP.

\section{Critical points in nonlinear programming}\label{s:complexity}

In this section we revisit the notion of critical points in unconstrained and constrained optimization.
We also discuss notions of approximately critical points,
and review methods with finite-time complexity guarantees.

\subsection{Unconstrained case}
Consider the optimization problem
\begin{equation} \label{eq:unconstrained}
  \tag{$P_{\rm un}$}
  \min_{y\in \RR^n}\; f(y),
\end{equation}
with $f:\RR^n \!\to\! \RR$ twice continuously differentiable.
A vector $y\in \RR^n$ is a \emph{2nd-order critical} point for~\eqref{eq:unconstrained},
abbreviated 2-critical,
if it satisfies:
\begin{gather*}
  \nabla f(y) = 0,
  \qquad\qquad
  \nabla^2 f(y) \succeq 0.
\end{gather*}
In the unconstrained case any local minimum of $f$ is also a critical point.

Practical optimization algorithms cannot obtain a solution satisfying the above equations exactly.
Hence, we consider a relaxation of these conditions.

\begin{definition}
  Given $\varepsilon_1,\varepsilon_2\in \RR_+$,
  we say that $y$ is $(\varepsilon_1,\varepsilon_2)$-\emph{approximately 2-critical} (AC) for \eqref{eq:unconstrained} if:
  \begin{gather} \label{eq:conditionsunconstrainedapprox}
    \|\nabla f(y)\| \leq \varepsilon_1,
    \qquad\qquad
    \nabla^2 f(y) \succeq -\varepsilon_2 \id_n.
  \end{gather}
\end{definition}

Several algorithms for unconstrained optimization with provable convergence guarantees are known.
Recent work has focused on deriving algorithms with finite-time guarantees.
In particular,
the trust region method computes an $(\varepsilon_1,\varepsilon_2)$-AC point in
$O(\max\{\varepsilon_1^{-2}\varepsilon_2^{-1},\varepsilon_2^{-3}\})$ iterations~\cite{Cartis2012},
and the adaptive regularization with cubics (ARC) method takes
$O(\max\{\varepsilon_1^{-2},\varepsilon_2^{-3}\})$ iterations~\cite{Cartis2011}.
We formally state the result for the ARC method.

\begin{theorem}[{\cite{Cartis2011}}] \label{thm:complexityunconstrained}
  Assume that there exists $\alpha>0$ such that:
  \begin{itemize}
    \item A point $y_0$ with $f(y_0)\leq \alpha$ is known.
    \item $f, \nabla f, \nabla^2 f$ are uniformly bounded and Lipschitz continuous on the lower set
      $ \{ y : f(y)\leq \alpha\} $.
  \end{itemize}
  The ARC method initialized at~$y_0$
  requires $O(\max\{\varepsilon_1^{-2},\varepsilon_2^{-3}\})$ iterations to produce
  an $(\varepsilon_1,\varepsilon_2)$-AC point $y$.
  Furthermore, each iteration requires $O(1)$ evaluations of $f$ and its derivatives.
\end{theorem}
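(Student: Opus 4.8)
The plan is to follow the adaptive-regularization analysis of Cartis, Gould and Toint, adapting the bookkeeping to the two-sided accuracy requirement in~\eqref{eq:conditionsunconstrainedapprox}. Recall that at iteration $k$ the ARC method forms the cubic model $m_k(s) = f(y_k) + \nabla f(y_k)^T s + \frac12 s^T \nabla^2 f(y_k)\, s + \frac{\sigma_k}{3}\|s\|^3$, computes an approximate minimizer $s_k$ obeying the standard termination rule (the model gradient at $s_k$ is small relative to $\|s_k\|$, and for the second-order variant the model curvature along $s_k$ is controlled), and accepts the trial point $y_k + s_k$ when $\rho_k := (f(y_k) - f(y_k+s_k))/(f(y_k)-m_k(s_k))$ exceeds a fixed threshold, increasing $\sigma_k$ after an unsuccessful step and decreasing it after a successful one. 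Since $f$ does not increase along accepted iterates and $f(y_0)\le\alpha$, every iterate lies in the level set $\mathcal{L} := \{y : f(y)\le\alpha\}$, so the uniform bounds and Lipschitz constants assumed in the statement hold along the whole run.

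First I would record a lower bound on the model decrease, $f(y_k) - m_k(s_k)\ge c_1\,\sigma_k\|s_k\|^3$, which follows directly from the subproblem termination rule. The key step is then to bound $\sigma_k$ from above uniformly: using Lipschitz continuity of $\nabla^2 f$ on $\mathcal{L}$ and Taylor's theorem, $|f(y_k+s_k) - m_k(s_k)|$ is $O(\|s_k\|^3)$ with constant depending only on the Hessian Lipschitz constant, so once $\sigma_k$ exceeds a fixed multiple of that constant the iteration must be successful; with the $\sigma$-update rule this gives $\sigma_k\le\sigma_{\max}$ for all $k$. I expect this to be the main obstacle — it is where the compactness-type hypotheses are genuinely needed, and it rests on the (easy but essential) observation that $\mathcal{L}$ is invariant under the algorithm.

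Next I would lower bound the step length at any iterate that is not yet $(\varepsilon_1,\varepsilon_2)$-AC, splitting into two cases. If $\|\nabla f(y_k)\|>\varepsilon_1$, the stationarity of $m_k$ at $s_k$ together with $\sigma_k\le\sigma_{\max}$ forces a Cauchy-type decrease, so that a successful step reduces $f$ by at least $c\,\varepsilon_1^{2}$. If instead $\nabla^2 f(y_k)$ has an eigenvalue below $-\varepsilon_2$, the second-order termination condition yields $\|s_k\|\gtrsim\varepsilon_2/\sigma_{\max}$, whence a successful step reduces $f$ by at least $c\,\varepsilon_2^{3}$. In either case a successful step before termination decreases $f$ by at least $c\,\min\{\varepsilon_1^{2},\varepsilon_2^{3}\}$.

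Finally, since $f$ is bounded below on $\mathcal{L}$, telescoping these decreases caps the number of successful iterations at $O(\max\{\varepsilon_1^{-2},\varepsilon_2^{-3}\})$; a standard counting argument (on unsuccessful steps $\sigma_k$ grows geometrically yet never exceeds $\sigma_{\max}$) bounds the unsuccessful iterations by a constant times the successful ones, so the total iteration count satisfies the same bound. For the per-iteration cost, each iteration evaluates $f$ once to compute $\rho_k$ and — only when the step is accepted — evaluates $\nabla f$ and $\nabla^2 f$ (or the Hessian-vector products needed for the subproblem) once to build the next model; the subproblem solve itself uses only model data. Hence each iteration uses $O(1)$ evaluations of $f$ and its derivatives, which completes the argument.
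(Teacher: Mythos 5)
The paper does not prove this theorem: it is cited directly from Cartis, Gould, and Toint (\cite{Cartis2011}), so the comparison is against that reference rather than an in-paper argument. Your reconstruction — level-set invariance, the model-decrease bound $f(y_k)-m_k(s_k)\gtrsim\sigma_k\|s_k\|^3$, the uniform bound on $\sigma_k$ via Hessian Lipschitz continuity, the case split giving per-step decrease $\gtrsim\min\{\varepsilon_1^2,\varepsilon_2^3\}$, and the telescoping plus unsuccessful-step counting — is essentially the standard ARC complexity proof from that reference, and it is correct; the only slight imprecision is that the $\varepsilon_1^2$ decrease comes from the Cauchy-point condition imposed in the subproblem termination rule (requiring $m_k(s_k)\le m_k(s_k^C)$) rather than from stationarity of $m_k$ at $s_k$, but the conclusion is the same.
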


\subsection{Constrained case}
Consider the nonlinear program
\begin{equation}\label{eq:nonlinear}
  \tag{$P_{\rm con}$}
  \min_{y\in \mathfrak{M}}\; f(y),
  \quad \text{ where }\quad
  \mathfrak{M} := \{ y \in \RR^m : h(y)=0 \},
\end{equation}
with $f:\RR^n \!\to\! \RR$, $h:\RR^n \!\to\! \RR^m$ twice continuously differentiable.
The Lagrangian function is $\Lag(y,\lambda) \!=\! f(y) \!+\! \lambda {\cdot} h(y)$.
A vector $y\!\in\!\RR^n$ is a \emph{2-critical point} if there are multipliers $\lambda\!\in\!\RR^m$ such that
\begin{gather*}
  h(y) = 0,
  \qquad\qquad
  \nabla_{\!y} \Lag(y,\lambda) = 0,
  \\
  u^T \nabla^2_{\!yy} \Lag(y,\lambda) u \geq 0,
  \quad\forall\, u \text{ such that }
  \nabla h(y) u = 0.
\end{gather*}
In the constrained case, for a local minima to be a critical point we need some regularity conditions.
One such condition is the \emph{linear independence constraint qualification} (LICQ),
that states that $\nabla h(y)$ is full rank.
This is equivalent to $\mathfrak{M}$ being smooth at~$y$
for the case of complete intersections (i.e., $\codim \mathfrak{M} = m$).

We now consider a relaxation of the criticality conditions.

\begin{definition}
  Given $\boldsymbol\varepsilon=(\varepsilon_0,\varepsilon_1,\varepsilon_2)\in \RR_+^3$, $\gamma\in\RR_+$,
  we say that $y$ is $(\boldsymbol\varepsilon,\gamma)$-\emph{approximately feasible approximately 2-critical} (AFAC) for \eqref{eq:nonlinear}
  if there exists $\lambda\in\RR^m$ such that:
  \begin{subequations}\label{eq:conditionsKKTapprox}
    \begin{gather}
      \label{eq:firstorderapprox}
      \|h(y)\| \leq \varepsilon_0,
      \qquad\qquad
      \|\nabla_{\!y} \Lag(y,\lambda)\| \leq \varepsilon_1,
      \\
      \label{eq:secondorderapprox}
      u^T \nabla^2_{\!yy} \Lag(y,\lambda) u \geq -\varepsilon_2,
      \quad\forall\, u \text{ of unit norm such that }
      \|\nabla h(y) u\| \leq \gamma.
    \end{gather}
  \end{subequations}
\end{definition}

Our subsequent analysis requires a bound on the Lagrange multipliers~$\lambda$.
We can obtain such a bound through a quantitative version of LICQ.

\begin{definition}\label{def:licqp}
  For $\varrho>0$, we say that \emph{$\varrho$-LICQ} holds at~$y$ if
  the smallest singular value of $\nabla h(y)$ is at least~$\varrho$.
\end{definition}

\begin{lemma} \label{thm:lambda}
  Let $(y,\lambda)$ be such that
  $\|\nabla_{\!y} \Lag(y,\lambda)\| \!\leq\! \varepsilon_1$.
  If $\varrho$-LICQ holds at~$y$,
  then $\|\lambda\| \leq \varrho^{-1}(\varepsilon_1 {+} \|\nabla f(y)\|)$.
\end{lemma}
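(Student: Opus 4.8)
The plan is to use the first-order stationarity condition $\nabla_{\!y}\Lag(y,\lambda)=\nabla f(y)+\nabla h(y)^T\lambda$ together with the lower bound on the singular values of $\nabla h(y)$ to invert the linear system for $\lambda$. Writing $g := \nabla_{\!y}\Lag(y,\lambda)$, we have $\nabla h(y)^T\lambda = g - \nabla f(y)$, so $\lambda$ lies in the preimage of a vector of norm at most $\varepsilon_1 + \|\nabla f(y)\|$ under the linear map $M^T$, where $M := \nabla h(y)\in\RR^{m\times n}$.

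First I would note that $\varrho$-LICQ says $\sigma_{\min}(M)\geq\varrho>0$; in particular $M$ has full row rank $m$, so $MM^T\in\SS^m$ is invertible with $\mineig(MM^T)=\sigma_{\min}(M)^2\geq\varrho^2$. Applying $M$ to the identity $M^T\lambda = g-\nabla f(y)$ gives $MM^T\lambda = M\bigl(g-\nabla f(y)\bigr)$, hence $\lambda = (MM^T)^{-1}M\bigl(g-\nabla f(y)\bigr)$. Then I would bound the operator norm: $\|(MM^T)^{-1}M\|_{\mathrm{op}} = \|M^T(MM^T)^{-1}\|_{\mathrm{op}}$ equals the largest singular value of the pseudoinverse $M^{+}$, which is $1/\sigma_{\min}(M)\leq\varrho^{-1}$. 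Combining, $\|\lambda\| \leq \varrho^{-1}\,\|g-\nabla f(y)\| \leq \varrho^{-1}\bigl(\|g\| + \|\nabla f(y)\|\bigr) \leq \varrho^{-1}\bigl(\varepsilon_1 + \|\nabla f(y)\|\bigr)$, using $\|g\|=\|\nabla_{\!y}\Lag(y,\lambda)\|\leq\varepsilon_1$ and the triangle inequality. This is exactly the claimed bound.

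There is essentially no hard part here — the only thing to be careful about is that without LICQ the multiplier $\lambda$ in the first-order approximate condition need not be unique, so one should be explicit that the bound holds for \emph{the given} $\lambda$ appearing in the hypothesis, and that the argument above does not require $\lambda$ to be the minimum-norm solution: the computation $\lambda=(MM^T)^{-1}M(g-\nabla f(y))$ is forced once $M^T\lambda = g-\nabla f(y)$ holds and $M$ has full row rank, since then $M^T$ is injective and the preimage is a single point. Alternatively, and perhaps cleaner for exposition, I would avoid the pseudoinverse and argue directly: $\varrho\|\lambda\| \leq \|M^T\lambda\| = \|\,\nabla_{\!y}\Lag(y,\lambda) - \nabla f(y)\,\| \leq \varepsilon_1 + \|\nabla f(y)\|$, where the first inequality is just $\|M^T\lambda\|\geq\sigma_{\min}(M^T)\|\lambda\|=\sigma_{\min}(M)\|\lambda\|\geq\varrho\|\lambda\|$. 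Dividing by $\varrho$ finishes the proof.
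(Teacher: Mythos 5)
Your proposal is correct and follows essentially the same route as the paper, which also solves $J^T\lambda = \nabla_{\!y}\Lag(y,\lambda)-\nabla f(y)$ via the pseudoinverse of $J=\nabla h(y)$ and bounds $\|\lambda\|$ by $\varrho^{-1}$ times the right-hand side's norm. Your closing one-line variant ($\varrho\|\lambda\|\le\|J^T\lambda\|$) is a clean equivalent of the same estimate.
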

\begin{proof}
  Let $J := \nabla h(y)$.
  Since
  $ \nabla L(y,\lambda) =  \nabla f(y) {+} J^T \lambda $,
  and $J$ is full rank,
  then
  $ \lambda = (J^\dagger)^T (\nabla L(y,\lambda) {-} \nabla f(y)) $,
  where $J^\dagger$ is the pseudo-inverse of $J$.
  Hence
  $
    \|\lambda\|
    \leq \varrho^{-1} (\varepsilon_1 {+} \|\nabla f(y)\|).
    \qedhere
  $
\end{proof}

Several local optimization methods for~\eqref{eq:nonlinear} with provable convergence guarantees are known.
In particular, augmented Lagrangians~\cite{Andreani2010} and trust-region methods~\cite[\S15.4]{Conn2000} converge to 2-critical points.
Recent work has focused on finding algorithms with finite-time guarantees.
The complexity of computing approximately 1-critical points was studied in, e.g.,~\cite{Birgin2016,Cartis2013,Cartis2014,Curtis2018}.

As for approximately 2-critical points, we are only aware of~\cite{Cartis2018,Sahin2019}.
But both papers use a different 2nd-order condition,
which is not easy to translate into our setting.
Nonetheless, in \Cref{thm:complexity} below
we show that AFAC points can be computed in polynomial time.
The proof of this theorem is in \Cref{s:outer},
and relies on a variant of the method from~\cite{Cartis2018}.
To the best of our knowledge,
this is the first polynomial time bound for computing 2-critical points that works with the standard notions of criticality.

\begin{theorem} \label{thm:complexity}
  Assume that there exist
  $\beta,\varrho \in \RR_+$
  such that:
  \begin{itemize}
    \item
      A point $y_0$ in the set
      $ \mathfrak{M}_\beta := \{ y : \|h(y)\| {\leq} \beta\}$
      is known.
    \item
      $f, \nabla f, \nabla^2 f$ and $\nabla h_i, \nabla^2 h_i$ for $i\in[m]$ are uniformly bounded and Lipschitz continuous on
      $ \mathfrak{M}_\beta$.
    \item
      $\varrho$-LICQ holds at all $y \in \mathfrak{M}_\beta$.
  \end{itemize}
  Let $\boldsymbol\varepsilon = (\varepsilon_0,\varepsilon_1,\varepsilon_2)\in \RR_+^3$,
  $\gamma \in \RR_+$
  be such that
  \begin{align}\label{eq:epsilonineqs}
    \varepsilon_0 \leq \beta,
    \quad
    \varepsilon_1 \leq 1,
    \quad
    \varepsilon_1^2 \leq \tfrac{1}{16} R_\lambda^{-1}\, \varepsilon_0\, \varepsilon_2,
    \quad
    \gamma^2 \leq \tfrac{1}{16} R_\lambda^{-3}\, \varepsilon_0\, \varepsilon_2,
  \end{align}
  where
  $R_\lambda \!:=\! 2 {+} 2\, \varrho^{-1} L_f$,
  $L_f \!:=\! \max_{y} \|\nabla f(y)\|$.
  There is an algorithm that,
  when initialized at~$y_0$,
  requires
  $O(\max\{\varepsilon_0^{-2}\, \varepsilon_1^{-2},\, \varepsilon_0^{-3}\, \varepsilon_2^{-3}\})$
  evaluations of $f, h$ and their derivatives to produce an
  $(\boldsymbol\varepsilon,\gamma)$-AFAC pair $(y,\lambda)$,
  with $\|\lambda\|\leq R_\lambda$.
\end{theorem}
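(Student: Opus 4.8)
The plan is to reduce the problem to an interplay between an inner solver that handles the nonconvex Burer–Monteiro-type unconstrained subproblem and an outer loop that drives infeasibility down, following the quadratic-penalty (or augmented-Lagrangian) philosophy used in~\cite{Cartis2018}, but tracking the second-order information carefully so that the final guarantee is stated in terms of the \emph{standard} AFAC conditions rather than a nonstandard one.

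First I would set up the penalty subproblem. For a penalty parameter $\mu>0$, consider $\Phi_\mu(y) := f(y) + \tfrac{\mu}{2}\|h(y)\|^2$, which is unconstrained. I would invoke \Cref{thm:complexityunconstrained} (the ARC method) to produce, in $O(\max\{\delta_1^{-2},\delta_2^{-3}\})$ iterations, a point $y$ that is $(\delta_1,\delta_2)$-AC for $\Phi_\mu$, where $\delta_1,\delta_2$ are internal tolerances to be chosen. Here one must check the hypotheses of \Cref{thm:complexityunconstrained}: since $y_0\in\mathfrak{M}_\beta$ gives $\Phi_\mu(y_0)\le f(y_0)+\tfrac{\mu}{2}\beta^2$, and $f,h$ and their first two derivatives are uniformly bounded and Lipschitz on $\mathfrak{M}_\beta$, one gets the required boundedness/Lipschitz properties of $\Phi_\mu$ on the lower level set $\{\Phi_\mu\le\alpha\}$ \emph{provided} that this level set stays inside $\mathfrak{M}_\beta$ — which it does once $\mu$ is large enough relative to the range of $f$, because then $\tfrac{\mu}{2}\|h(y)\|^2\le\alpha-\min f$ forces $\|h(y)\|$ small. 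This gives the bootstrap that keeps the iterates feasible-ish throughout.

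Next I would translate the AC conditions for $\Phi_\mu$ into AFAC conditions for~\eqref{eq:nonlinear}. Writing $\lambda := \mu\, h(y)$, we have $\nabla\Phi_\mu(y) = \nabla f(y) + \nabla h(y)^T\lambda = \nabla_y L(y,\lambda)$, so $\|\nabla_y L(y,\lambda)\|\le\delta_1$ directly. For feasibility, $\|h(y)\| = \|\lambda\|/\mu$, and by \Cref{thm:lambda} with $\varrho$-LICQ we get $\|\lambda\|\le\varrho^{-1}(\delta_1+L_f)\le R_\lambda/2\le R_\lambda$ once $\delta_1\le1$, hence $\|h(y)\|\le R_\lambda/\mu$; choosing $\mu\ge R_\lambda/\varepsilon_0$ yields $\|h(y)\|\le\varepsilon_0\le\beta$. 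The Hessian is the delicate part: $\nabla^2\Phi_\mu(y) = \nabla^2_{yy}L(y,\lambda) + \mu\,\nabla h(y)^T\nabla h(y) + \mu\sum_i h_i(y)\nabla^2 h_i(y)$. The AC condition says this is $\succeq -\delta_2 I$. On a unit vector $u$ with $\|\nabla h(y)u\|\le\gamma$, the term $\mu\|\nabla h(y)u\|^2\le\mu\gamma^2$, and the term $\mu\sum_i h_i\nabla^2 h_i$ is bounded by $\mu\|h(y)\|\cdot(\sqrt{m}\,\sup\|\nabla^2 h_i\|)\le R_\lambda\cdot\sqrt{m}\,\sup\|\nabla^2 h_i\|$, i.e.\ $O(1)$ — wait, more carefully $\mu\|h(y)\|\le R_\lambda$ so this is genuinely bounded independent of $\mu$. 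Therefore $u^T\nabla^2_{yy}L\,u \ge -\delta_2 - \mu\gamma^2 - C_h$ for a constant $C_h$, which is \emph{not} automatically $\ge -\varepsilon_2$. This is the main obstacle: the $\mu\gamma^2$ term and the curvature-of-constraint term $C_h$ must be absorbed, which forces $\gamma^2\lesssim \varepsilon_2/\mu \sim R_\lambda^{-1}\varepsilon_0\varepsilon_2$ and also seems to need $\delta_2$ small — but a constant $C_h$ independent of $\mu$ cannot be killed by shrinking $\delta_2$.

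To resolve this I expect the correct fix is the augmented-Lagrangian / shifted-penalty variant rather than the plain quadratic penalty: the offending term $\mu\sum_i h_i\nabla^2 h_i$ is exactly the piece that the true Lagrangian Hessian $\nabla^2_{yy}L(y,\mu h(y))$ already contains, so it should be grouped \emph{with} $\nabla^2_{yy}L$, not treated as an error — i.e., one measures second-order criticality of the subproblem after subtracting off the curvature that belongs to $L$, which is precisely the ``nonstandard'' condition of~\cite{Cartis2018}, and the content of this theorem is that one extra careful step converts it back. Concretely, the inner solver should target a point where $\nabla^2 f + \mu\nabla h^T\nabla h \succeq -\delta_2 I$ restricted to the approximate null space, and then one argues $\nabla^2_{yy}L(y,\lambda)\,u\cdot u \ge -\delta_2 - \mu\|\nabla h(y)u\|^2 \ge -\delta_2-\mu\gamma^2$ with no leftover constant; choosing $\delta_2 = \varepsilon_2/2$ and $\mu\gamma^2\le\varepsilon_2/2$ (so $\gamma^2\le\varepsilon_2/(2\mu)$, consistent with~\eqref{eq:epsilonineqs} up to the $R_\lambda$ powers) closes it. Finally I would assemble the iteration count: with $\mu\sim R_\lambda/\varepsilon_0$, $\delta_1\sim\varepsilon_1$, $\delta_2\sim\varepsilon_2$ (and verifying the cross-conditions $\varepsilon_1^2\lesssim R_\lambda^{-1}\varepsilon_0\varepsilon_2$ are what make a single inner solve with these tolerances legitimate — they ensure the subproblem gradient tolerance $\delta_1$ is compatible with the feasibility level $\varepsilon_0$ and the curvature tolerance $\varepsilon_2$, so no outer loop over increasing $\mu$ is actually needed), \Cref{thm:complexityunconstrained} gives $O(\max\{\varepsilon_1^{-2},\varepsilon_2^{-3}\})$ \emph{inner} iterations, but each evaluation of $\Phi_\mu$ and its derivatives costs a factor involving $\mu\sim\varepsilon_0^{-1}$ in the relevant Lipschitz constants, which after rescaling the ARC analysis yields the stated $O(\max\{\varepsilon_0^{-2}\varepsilon_1^{-2},\varepsilon_0^{-3}\varepsilon_2^{-3}\})$ bound on evaluations of $f,h$ and their derivatives. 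The bound $\|\lambda\|\le R_\lambda$ is then exactly \Cref{thm:lambda} applied at the output point.
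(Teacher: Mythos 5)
Your plan is genuinely different from the paper's: you propose a single pass of ARC on the quadratic penalty $\Phi_\mu(y) = f(y) + \tfrac{\mu}{2}\|h(y)\|^2$ with a fixed $\mu \sim R_\lambda/\varepsilon_0$ and multiplier $\lambda = \mu h(y)$, whereas the paper runs the two-phase target-following scheme of Cartis et al.\ (\Cref{alg:cartis}): a Phase~I unconstrained solve of $\nu(y)=\|h(y)\|^2$ followed by a sequence of unconstrained solves of $\mu(t_k,y)=(f(y)-t_k)^2+\|h(y)\|^2$ with decreasing targets $t_k$, and multiplier $\lambda=(f(y)-t)^{-1}h(y)$. (Incidentally, your first Hessian expansion double-counts the constraint-curvature term; the correct identity is simply $\nabla^2\Phi_\mu(y)=\nabla^2_{yy}L(y,\mu h(y))+\mu\,\nabla h(y)^T\nabla h(y)$, so no augmented-Lagrangian modification is needed to make that piece ``belong'' to $L$ — you end up noticing this yourself.)

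However, there is a real gap that the paper's two-phase structure is specifically designed to close and that your single-shot penalty does not. All of your estimates (the $\|\lambda\|\le R_\lambda$ bound via $\varrho$-LICQ and \Cref{thm:lambda}, and the bounded Lipschitz constants needed for \Cref{thm:complexityunconstrained}) are valid only on $\mathfrak{M}_\beta$, yet the ARC level set $\{y: \Phi_\mu(y)\le\Phi_\mu(y_0)\}$ need not be contained in $\mathfrak{M}_\beta$. Your bootstrap argument — ``once $\mu$ is large enough, $\tfrac{\mu}{2}\|h(y)\|^2\le\alpha-\min f$ forces $\|h(y)\|$ small'' — is circular: with $\alpha=\Phi_\mu(y_0)\approx f(y_0)+\tfrac{\mu}{2}\beta^2$ you only get $\|h(y)\|^2\le\beta^2+2(f(y_0)-\inf f)/\mu$, which strictly exceeds $\beta^2$; moreover the $\inf f$ here must be taken over the level set itself, which is unknown and on which $f$ need not be bounded below (the hypothesis only controls $f$ on $\mathfrak{M}_\beta$). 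So you cannot invoke LICQ at the iterates, hence cannot bound $\lambda=\mu h(y)$, hence cannot conclude $\|h(y)\|\le\varepsilon_0$. The paper avoids this chicken-and-egg entirely: Phase~I runs ARC on $\nu$, whose iterates monotonically decrease $\nu$ and therefore stay in $\mathfrak{M}_\beta$; once $\nu(y_1)\le(\delta\epsilon_0)^2$, the merit value $\mu(t_k,y_k)$ is kept at most $\epsilon_0^2$ throughout Phase~II (eq.~\eqref{eq:property0} and~\eqref{eq:propertyA}), so every inner iterate lies in $\mathfrak{M}_{\epsilon_0}\subset\mathfrak{M}_\beta$. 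Without some analogue of Phase~I (or an outer loop with increasing $\mu$ initialized from a nearly feasible point), the single-penalty argument does not go through. Your iteration-count accounting (``each evaluation costs a factor $\mu\sim\varepsilon_0^{-1}$ in the Lipschitz constants'') would also need to be made precise, but that is secondary to the level-set/LICQ issue.
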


\begin{remark}
  \Cref{thm:complexity} assumes that $\varrho$-LICQ holds everywhere in $\mathfrak{M}_\beta$.
  This is a strong assumption, that can only be guaranteed when $\beta$ is very small.
  Hence the initial point $y_0$ must be approximately feasible.
\end{remark}

\section{Optimality of critical points}\label{s:optimality}

In this section we consider a smoothed analysis setting in which the constraint variables $\mathcal{A},b$ are fixed,
and the cost matrix~$C$ is subject to a small random perturbation.
We will show that problem~\eqref{eq:bm} has no \emph{spurious} approximately critical (AFAC) points with high probability.
This means that any AFAC point of~\eqref{eq:bm} is approximately optimal for~\eqref{eq:sdp}.
We will restrict our attention to AFAC pairs~$(Y,\lambda)$ of bounded norm.
Hence, we assume that $\|Y\| \!\leq\! R_Y$ and $\|\lambda\| \!\leq\! R_\lambda$ for some fixed constants~$R_Y,R_\lambda$.

A crucial step toward our theorem is a geometric characterization of the spurious AFAC points in terms of tubes around algebraic varieties.
We then take advantage of known effective bounds for the volume of such tubes~\cite{Lotz2015,Basu2021}.

From now on, we use the Frobenius norm for all matrices.

\subsection{Spurious approximately critical points}

We proceed to describe the optimality conditions of problems~\eqref{eq:sdp} and~\eqref{eq:bm}.
Consider first the convex problem~\eqref{eq:sdp}.
Its conic dual is
\begin{align*}
  \max_{\lambda\in \RR^m}\; b^T \lambda
  \quad\text{ such that }\quad
  S(\lambda) \succeq 0,
\end{align*}
where
\begin{align*}
  S(\lambda) \;:=\; C - \mathcal{A}^*(\lambda) \;\in\; \SS^n\;\;
  \quad&\text{ is the \emph{slack} matrix,}
  \\
  \mathcal{A}^*:\RR^m \!\to\! \SS^n,\quad
  \lambda \!\mapsto\!\sum\nolimits_i \lambda_i A_i
  \quad&\text{ is the adjoint of }\mathcal{A}.
\end{align*}
The optimality conditions for the dual pair of SDPs are:
\begin{gather*}
  \mathcal{A}(X) = b,
  \quad
  X \succeq 0,
  \quad
  S(\lambda) \succeq 0,
  \quad
  S(\lambda)  X = 0.
\end{gather*}
The first three conditions correspond to primal and dual feasibility,
and the last one is complementary slackness.
We define an approximately optimal solution as a relaxation of the above conditions.

\begin{definition}
  Let $\boldsymbol\varepsilon\!=\!(\varepsilon_0,\varepsilon_1,\varepsilon_2)\!\in\! \RR_+^3$.
  We say that $X\!\in\!\SS^n$ is $\boldsymbol\varepsilon$-\emph{approximately optimal} for~\eqref{eq:sdp} if there is $\lambda\in\RR^m$ such that:
  \begin{gather} \label{eq:approxoptimal}
    \|\mathcal{A}(X) - b\| \leq \varepsilon_0,
    \quad
    \|S(\lambda)  X\| \leq \varepsilon_1,
    \quad
    X \succeq 0,
    \quad
    S(\lambda) \succeq -\varepsilon_2\, \id_n.
  \end{gather}
\end{definition}

It is known that an $\boldsymbol\varepsilon$-approximately optimal solution is at distance $O(\|\boldsymbol\varepsilon\|)$ from an optimal solution under nondegeneracy assumptions~\cite{Sturm2000}.
We can also give a simple bound on the optimality gap.

\begin{lemma} \label{thm:approxoptimal}
  If $(\bar X,\bar\lambda)$ is $\boldsymbol\varepsilon$-approximately optimal for~\eqref{eq:sdp} then
  \begin{align*}
    C\bullet \bar X \,\leq\, C\bullet X +\varepsilon_0 \|\bar\lambda\| + \varepsilon_1\sqrt{n} + \varepsilon_2 \|X\| \sqrt{n}
    \qquad\forall\text{ feasible } X.
  \end{align*}
\end{lemma}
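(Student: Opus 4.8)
The plan is to bound the optimality gap $C\bullet\bar X - C\bullet X$ directly from the approximate-optimality conditions, following the standard weak-duality argument but keeping track of the error terms. First I would introduce the slack matrix $S := S(\bar\lambda) = C - \mathcal{A}^*(\bar\lambda)$ and rewrite the cost using $C = S + \mathcal{A}^*(\bar\lambda)$, so that for any matrix $Z$ we have $C\bullet Z = S\bullet Z + \mathcal{A}^*(\bar\lambda)\bullet Z = S\bullet Z + \bar\lambda\cdot\mathcal{A}(Z)$. Applying this to $Z = \bar X$ and to $Z = X$ and subtracting gives
\begin{align*}
  C\bullet\bar X - C\bullet X = S\bullet\bar X - S\bullet X + \bar\lambda\cdot(\mathcal{A}(\bar X) - \mathcal{A}(X)).
\end{align*}

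Next I would bound each of the three resulting terms. For $S\bullet\bar X$: by complementary-slackness approximation $\|S\bar X\|\leq\varepsilon_1$, and since $S\bullet\bar X = \operatorname{tr}(S\bar X)$, Cauchy--Schwarz (or the fact that $|\operatorname{tr} M|\leq\sqrt n\,\|M\|$ in Frobenius norm for an $n\times n$ matrix) gives $S\bullet\bar X \leq \varepsilon_1\sqrt n$. For $-S\bullet X$: since $X$ is feasible we have $X\succeq 0$, and $S\succeq-\varepsilon_2 I_n$ means $S + \varepsilon_2 I_n\succeq 0$; the inner product of two PSD matrices is nonnegative, so $(S+\varepsilon_2 I_n)\bullet X \geq 0$, i.e. $-S\bullet X \leq \varepsilon_2\,(I_n\bullet X) = \varepsilon_2\operatorname{tr}(X)\leq\varepsilon_2\sqrt n\,\|X\|$ (again using $\operatorname{tr} X = I_n\bullet X \leq \sqrt n\|X\|$, valid since $X\succeq 0$ so $\operatorname{tr} X = \|X\|_* \le \sqrt{n}\|X\|_F$). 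For the multiplier term: $\mathcal{A}(X) = b$ by feasibility, so $\mathcal{A}(\bar X) - \mathcal{A}(X) = \mathcal{A}(\bar X) - b$, whose norm is at most $\varepsilon_0$; then $\bar\lambda\cdot(\mathcal{A}(\bar X)-b)\leq\|\bar\lambda\|\,\varepsilon_0$ by Cauchy--Schwarz. Summing the three bounds yields exactly $C\bullet\bar X \leq C\bullet X + \varepsilon_0\|\bar\lambda\| + \varepsilon_1\sqrt n + \varepsilon_2\|X\|\sqrt n$.

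There is no real obstacle here; the only mild subtlety is being careful about which norm is used when passing between $\operatorname{tr}$ and Frobenius norm, and making sure the trace-versus-nuclear-norm step only invokes $X\succeq 0$ (so that $\operatorname{tr} X$ equals the sum of eigenvalues, hence $\leq\sqrt n$ times the Frobenius norm). Since the paper has fixed the Frobenius norm as the default matrix norm, all the Cauchy--Schwarz applications are with respect to the trace inner product $M\bullet N = \operatorname{tr}(M^T N)$, and $|M\bullet N|\leq\|M\|\,\|N\|$ is immediate. I would present the three term-by-term estimates as displayed inequalities and then combine, which is a short and routine write-up.
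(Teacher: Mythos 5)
Your proof is correct and follows essentially the same route as the paper's: both decompose $C = S(\bar\lambda) + \mathcal{A}^*(\bar\lambda)$ and bound the three resulting error terms using, respectively, the approximate complementary slackness, the approximate dual feasibility $S(\bar\lambda)\succeq -\varepsilon_2 I_n$ (paired with $X\succeq 0$), and the approximate primal feasibility of $\bar X$. The only cosmetic difference is that the paper chains the three inequalities in sequence whereas you subtract $C\bullet X$ from $C\bullet\bar X$ first and bound each term separately, which amounts to the same manipulations.
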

\begin{proof}
  The lemma follows from the following equations:
  \begin{gather*}
    C\bullet X
    \,=\, \bar\lambda \cdot \mathcal{A}(X) + S(\bar\lambda)\bullet X
    \,\geq\, \bar\lambda \cdot b -(\varepsilon_2 \id_n)\bullet X
    \,=\, \bar\lambda \cdot b - \varepsilon_2 \|X\| \sqrt{n},
    \\
    \bar\lambda \cdot b
    \,\geq\, \bar\lambda \cdot \mathcal{A}(\bar X) - \|\bar\lambda\|\, \|b {-} \mathcal{A}(\bar X)\|
    \,\geq\, \bar\lambda \cdot \mathcal{A}(\bar X) - \varepsilon_0 \|\bar\lambda\|,
    \\
    \bar\lambda \cdot \mathcal{A}(\bar X)
    \,\geq\, \bar\lambda \cdot \mathcal{A}(\bar X) + S(\bar\lambda) \bullet X - \|S(\bar\lambda) X\|_*
    \,\geq\, C \bullet \bar X - \varepsilon_1 \sqrt{n}.
    \qedhere
  \end{gather*}
\end{proof}

We proceed to problem~\eqref{eq:bm}.
This is a special instance of the nonlinear program~\eqref{eq:nonlinear} with
$f(Y) = C\!\bullet\! Y Y^T$ and $h(Y) = \mathcal{A}(Y Y^T)\!-\!b$.
The Lagrangian function is
$\Lag(y,\lambda) = S(\lambda) \!\bullet\! Y Y^T \!+ b^T \lambda$.
The criticality conditions for~\eqref{eq:bm} are obtained by specializing~\eqref{eq:conditionsKKTapprox}.
\begin{definition}
Let $\boldsymbol\varepsilon \!=\! (\varepsilon_0,\varepsilon_1,\varepsilon_2) \!\in\! \RR_+^3$, $\gamma\!\in\!\RR_+$.
We say that $Y\!\in\!\RR^{n\times p}$ is $(\boldsymbol\varepsilon,\gamma)$-\emph{approximately feasible approximately 2-critical} (AFAC) for~\eqref{eq:bm} if there is $\lambda\!\in\!\RR^m$ such that:
\begin{subequations}\label{eq:approxcritical}
\begin{gather}
  \label{eq:conditionfirst}
  \|\mathcal{A}(Y  Y^T) - b\| \leq \varepsilon_0,
  \qquad
  \|S(\lambda)  Y \| \leq \varepsilon_1,
  \\
  \label{eq:conditionsecond}
  S(\lambda)\bullet U U^T \!\geq\! -\varepsilon_2,
  \quad\forall\, U\!\in\! \RR^{n\times p} \text{ with } \|U\|\!=\!1,\,
  \|\mathcal{A}(U  Y^T)\| \!\leq\! \gamma.
\end{gather}
\end{subequations}
\end{definition}

\begin{remark}
  By fixing $\varepsilon_0\!=\!0$ we get a notion of exactly feasible approximately 2-critical (EFAC) point,
  which was used in~\cite{Pumir2018,Bhojanapalli2018}.
\end{remark}

We are ready to formalize the concept of spurious critical points.

\begin{definition}
  Let $R_Y, R_\lambda \!\in\! \RR_+$ be fixed constants.
  For arbitrary $\boldsymbol\varepsilon \!=\! (\varepsilon_0,\varepsilon_1,\varepsilon_2) \!\in\! \RR_+^3$, $\gamma\!\in\!\RR_+$,
  we say that a pair $(Y,\lambda)$ is \emph{spurious} $(\boldsymbol\varepsilon,\gamma)$-AFAC if:
  \begin{itemize}
    \item
      $(Y, \lambda)$ is $(\boldsymbol\varepsilon,\gamma)$-AFAC for~\eqref{eq:bm}.
    \item
      $(Y Y^T\!, \lambda)$ is not $\boldsymbol\varepsilon'$-approx.\ optimal for~\eqref{eq:sdp},
      with
      $\boldsymbol\varepsilon' \!:=\! (\varepsilon_0,  R_Y \varepsilon_1, \varepsilon_2)$.
    \item $\|Y\|\leq R_Y$ and $\|\lambda\|\leq R_\lambda$.
  \end{itemize}
  A pair $(Y,\lambda)$ is spurious \emph{exactly} critical if the above holds for $\boldsymbol\varepsilon=0, \gamma=0$.
\end{definition}

\subsection{Statement of the theorem}

We present the main result of this section.
Let $\mathcal{A},b$ be fixed,
and let $C$ be obtained from a random perturbation of magnitude~$\sigma$ around some fixed~$\bar C$.
Concretely, $C$ is \emph{uniformly} distributed on the Frobenius ball $\ball{\bar C}{\sigma}\subset\SS^n$ of radius~$\sigma$ centered at~$\bar C$.
Consider the set $\mathscr{C}_{\boldsymbol\varepsilon,\gamma} \subset \SS^n$,
consisting of all cost matrices for which there is a spurious AFAC point:
\begin{align*}
  \mathscr{C}_{\boldsymbol\varepsilon,\gamma}
  \;\;:=\;\; \left\{ C \in \SS^n \,:
    \exists\, (Y,\lambda) \text{ an spurious $(\boldsymbol\varepsilon,\gamma)$-AFAC pair}
  \right\}.
\end{align*}
We show that if $\tau(p)>m$,
then the matrix $C$ avoids the ``bad set''
$\mathscr{C}_{\boldsymbol\varepsilon,\gamma}$
with high probability.
More precisely, the probability
$\Pr[ C \in \mathscr{C}_{\boldsymbol\varepsilon,\gamma} ]$
is vanishingly small as the ratio $\varepsilon_1/\gamma$ goes to zero.

\begin{theorem}[critical $\Rightarrow$ optimal] \label{thm:main}
  Let $p$ such that $\tau(p) {>} m$.
  Let $\boldsymbol\varepsilon \!\in\! \RR^3_+$, $\gamma \!\in\! \RR_+$.
  Let $C$ be uniformly distributed on the Frobenius ball~$\ball{\bar C}{\sigma}$.
  Then
  \begin{align*}
    \Pr[ C \in \mathscr{C}_{\boldsymbol\varepsilon,\gamma} ]
    \;\leq\;
    4 e \,  \delta^{\tau(p)-m} \,(3\kappa)^m\,(4 n^3/\sigma)^{\tau(p)},
  \end{align*}
  where $\delta := \varepsilon_1 \|\mathcal{A}\| / \gamma$
  and $\kappa := R_\lambda \|\mathcal{A}\|$,
  provided that
  $\delta < \sigma/4 n^3$.
\end{theorem}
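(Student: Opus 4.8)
The plan is to bound the probability that the random cost matrix $C$ lands in the bad set $\mathscr{C}_{\boldsymbol\varepsilon,\gamma}$ by covering that set with a controlled-volume neighborhood of an algebraic variety and invoking the tube-volume bounds of~\cite{Lotz2015}. The first step is the deterministic characterization, which I expect to be stated as \Cref{thm:sufficient}: if $(Y,\lambda)$ is a spurious $(\boldsymbol\varepsilon,\gamma)$-AFAC pair with $\|Y\|\le R_Y$, $\|\lambda\|\le R_\lambda$, then the slack $S(\lambda) = C - \mathcal{A}^*(\lambda)$ must, after a change of variable that quotients out the freedom in $Y$, be within distance $O(\delta)$ (in Frobenius norm, suitably normalized by $\|\mathcal{A}\|$) of a fixed real algebraic variety $\mathcal{V}$. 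Intuitively, the condition $\|S(\lambda)Y\|\le\varepsilon_1$ forces $S(\lambda)$ to nearly annihilate the column span of $Y$, while the second-order condition forbids $S(\lambda)$ from having a strictly negative direction transverse to the constraints; when $\tau(p)>m$, these together would force $S(\lambda)\succeq 0$ and $S(\lambda)Y=0$ exactly in the unperturbed case, i.e. genuine optimality — so spuriousness means $S(\lambda)$ sits in the $\delta$-tube of the variety of matrices that are PSD and drop rank by at least $p$, but is not actually PSD. This variety has degree and dimension bounded by $\poly(n)$ and codimension exactly $\tau(p)-m$ after accounting for the $m$ degrees of freedom in $\lambda$, which is where the exponent $\tau(p)-m$ in the statement comes from.

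The second step is to translate "$C$ yields a spurious pair" into "$C$ lies in a union, over $\lambda$ in a ball of radius $R_\lambda$, of $\delta$-tubes $\mathcal{A}^*(\lambda) + \tube(\mathcal{V},c\delta)$". Since this is a union over a continuum of $\lambda$, I would discretize: take a $\delta$-net $\Lambda_0$ of the ball $\ball{0}{R_\lambda}\subset\RR^m$ of size at most $(3R_\lambda/\delta)^m$ — this is the source of the $(3\kappa)^m$ factor once the $\|\mathcal{A}\|$ scaling is folded in — and observe that for $\lambda$ within $\delta$ of a net point $\lambda_0$, the tube around $\mathcal{A}^*(\lambda)$ is contained in a slightly fattened tube around $\mathcal{A}^*(\lambda_0)$, at the cost of replacing $c\delta$ by $c'\delta$. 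Hence
\begin{align*}
  \mathscr{C}_{\boldsymbol\varepsilon,\gamma}
  \;\subseteq\;
  \bigcup_{\lambda_0\in\Lambda_0}\Bigl(\mathcal{A}^*(\lambda_0) + \tube(\mathcal{V}, c'\delta/\|\mathcal{A}\|)\Bigr).
\end{align*}
The third step is the volume estimate: the probability that the uniform point $C\in\ball{\bar C}{\sigma}$ falls in any single translated tube is the ratio of $\vol(\tube(\mathcal{V},c'\delta/\|\mathcal{A}\|)\cap\ball{\bar C}{\sigma})$ to $\vol(\ball{\bar C}{\sigma})$; applying the effective tube-volume bound (a Weyl-type bound, valid in the regime $\delta < \sigma/4en^3$ which guarantees the tube radius is below the reach so the bound is not vacuous) gives a factor of order $(\text{const}\cdot n^3\delta/\sigma)^{\tau(p)-m}$ per net point, times a lower-order factor absorbed into the $8e$ and the remaining powers of $(2en^3/\sigma)$. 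Multiplying by $|\Lambda_0|$ and collecting constants yields exactly the claimed bound $8e\,\delta^{\tau(p)-m}(3\kappa)^m(2en^3/\sigma)^{\tau(p)}$.

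The main obstacle is the deterministic characterization in the first step — proving that a spurious \emph{approximately} critical pair forces the slack matrix into a genuine polynomial $\delta$-tube, with honest control of the constants $c$, the degree and dimension of $\mathcal{V}$, and the dependence on $R_Y$. The delicate point is handling the second-order condition~\eqref{eq:conditionsecond}, which only quantifies over $U$ with $\|\mathcal{A}(UY^T)\|\le\gamma$ rather than over the exact tangent space $\{\nabla h(Y)U = 0\}$; one must show that relaxing the constraint from $0$ to $\gamma$ only perturbs the conclusion by $O(\varepsilon_1/\gamma) = O(\delta)$, which is precisely why $\delta$ is defined as the ratio $\varepsilon_1\|\mathcal{A}\|/\gamma$ and not just $\varepsilon_1$. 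A secondary technical nuisance is that $\mathcal{V}$ is naturally defined in the quotient by the orthogonal group acting on $Y$, so one has to argue that the relevant object — the slack matrix $S(\lambda)$, which is invariant under this action — still lies in a tube of the expected codimension; this is where the bound $\tau(p)>m$ is used, ensuring the variety of "rank-deficient PSD slacks compatible with some $\lambda$" has positive codimension $\tau(p)-m$ and the probability genuinely decays.
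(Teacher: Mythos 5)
Your overall strategy matches the paper's: a deterministic inclusion of $\mathscr{C}_{\boldsymbol\varepsilon,\gamma}$ into a tube around a variety, an $\epsilon$-net over $\lambda$, and the tube-volume bound of~\cite{Lotz2015}. But the heart of the argument --- the deterministic characterization you flag as the main obstacle --- is misconceived in a few ways that would derail a careful writeup.

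First, the variety you describe (``PSD matrices that drop rank by at least $p$'', with a quotient by the orthogonal group acting on $Y$) is not the right object, and neither the positivity condition nor the group action enters the paper's argument. The correct variety is simply the determinantal variety $\SS^n_{n-p}=\{X:\rank X\le n-p\}$, with codimension $\tau(p)$ and degree $n-p+1$, and the slack $S(\lambda)$ lives directly in~$\SS^n$. The mechanism that puts $S(\lambda)$ into the $\delta$-tube of $\SS^n_{n-p}$ is a dichotomy on the smallest singular value $\sigma_p(Y)$ (\Cref{thm:firstpart}): either $\sigma_p(Y)\le\gamma/\|\mathcal{A}\|$, in which case one builds test directions $U=uz^T$ with $\|\mathcal{A}(UY^T)\|\le\gamma$ and the second-order condition forces $S(\lambda)\succeq -\varepsilon_2 I$, contradicting spuriousness; or $\sigma_p(Y)>\gamma/\|\mathcal{A}\|$, in which case
\[
\dist\bigl(S(\lambda),\SS^n_{n-p}\bigr)\,=\,\Bigl(\sum_{i=1}^p\sigma^2_{n-i+1}(S(\lambda))\Bigr)^{1/2}\le\frac{\|S(\lambda)Y\|}{\sigma_p(Y)}<\delta.
\]
Your heuristic that ``relaxing $0$ to $\gamma$ costs $O(\varepsilon_1/\gamma)$'' is morally right, but without this specific $\sigma_p$ dichotomy you have no handle on the first branch, and without the singular-value distance formula the second branch does not produce a clean tube inclusion.

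Second, your codimension bookkeeping is internally inconsistent, even though the errors happen to cancel. You assert that each net point contributes a factor $(n^3\delta/\sigma)^{\tau(p)-m}$ and that the net contributes $(3\kappa)^m$ with no $\delta$; in fact, since each net point produces a translated copy of $\SS^n_{n-p}$ of codimension $\tau(p)$ (not $\tau(p)-m$), the per-point tube bound gives $(2en^3\delta/\sigma)^{\tau(p)}$, and the net --- which must have spacing $\delta/\|\mathcal{A}\|$ in $\lambda$-space so that $\mathcal{A}^*$ maps it to a $\delta$-net in $\SS^n$ --- has size $(3\kappa/\delta)^m$. It is only the product of $\delta^{\tau(p)}$ and $\delta^{-m}$ that yields $\delta^{\tau(p)-m}$; no single variety of codimension $\tau(p)-m$ appears in the argument. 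As written, your accounting would require a justified volume bound for tubes around a non-compact, non-translation-invariant sum variety $\SS^n_{n-p}+\image\mathcal{A}^*$, which you do not supply and which the paper deliberately avoids.
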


The following corollary shows that when the stronger condition $\tau(p) > (1{+}\eta)m$ holds,
where $\eta$ is a fixed constant,
then we can derive a high probability bound while maintaining $\delta$ polynomially bounded.

\begin{corollary} \label{thm:main2}
  Consider the setup from \Cref{thm:main}.
  Assume that
  \begin{gather*}
  \tau(p) \geq (1{+}\eta)m + \eta\, t,
  \qquad
  \delta \leq (1/3\kappa)^{1/\eta}(\sigma/4 n^3)^{1{+}1/\eta},
  \end{gather*}
  for some arbitrary constants $\eta, t>0$.
  Then
  \begin{align*}
    \Pr[ C \in \mathscr{C}_{\boldsymbol\varepsilon,\gamma} ]
    \;\leq\;
    4 e \,  (\sigma/12 \kappa n^3)^{t}.
  \end{align*}
\end{corollary}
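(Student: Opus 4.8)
The plan is to substitute the hypotheses into the bound of \Cref{thm:main} and verify that the resulting expression collapses to $8e\,(\sigma/6\kappa e n^3)^t$. First I would check that the side condition $\delta < \sigma/4en^3$ required by \Cref{thm:main} is implied by the assumed bound on $\delta$; since $(1/3\kappa)^{1/\eta}(\sigma/2en^3)^{1+1/\eta} \le \sigma/2en^3 < \sigma/4en^3$ holds whenever $\kappa, \sigma/en^3$ are in a reasonable range (in particular $3\kappa \ge 1$ and $\sigma/2en^3 \le 1$, which one may assume or absorb into constants), the hypothesis of \Cref{thm:main} is satisfied and we may apply it.

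Next I would rewrite the bound $8e\,\delta^{\tau(p)-m}(3\kappa)^m(2en^3/\sigma)^{\tau(p)}$ by grouping exponents. Write $\tau(p) - m = \eta m + \eta t + (\text{nonneg slack})$ using the hypothesis $\tau(p) \ge (1+\eta)m + \eta t$. The key algebraic manoeuvre is to split the power of $\delta$: allocate an exponent of $m$ worth of $\delta$-factors against the $(3\kappa)^m$ term, an exponent of $\tau(p)$ worth against the $(2en^3/\sigma)^{\tau(p)}$ term, and keep what remains. Concretely, since $\delta \le (1/3\kappa)^{1/\eta}(\sigma/2en^3)^{1+1/\eta}$, we have $\delta^{\eta} \le (1/3\kappa)(\sigma/2en^3)^{1+\eta}$, equivalently $\delta^{\eta}\,(3\kappa)\,(2en^3/\sigma)^{1+\eta} \le 1$. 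Raising to the $m$-th power gives $\delta^{\eta m}(3\kappa)^m (2en^3/\sigma)^{(1+\eta)m} \le 1$. Separately, from $\delta^{\eta} \le (\sigma/2en^3)$ (a weaker consequence of the same bound, valid when $(1/3\kappa)(\sigma/2en^3)^{\eta}\le 1$) we get, after an analogous manipulation tuned to produce the factor $6\kappa$ rather than $2$, a bound of the form $\delta^{\eta t}\cdot(\text{stuff})^t \le (\sigma/6\kappa e n^3)^{t}$. Collecting: the term $(3\kappa)^m (2en^3/\sigma)^{\tau(p)}$ in \Cref{thm:main} is matched by $\delta^{\eta m}$ against the $m$-part and by $\delta^{\,\eta t}$ plus the remaining slack of $\delta^{\tau(p)-m-\eta m-\eta t}$ (which is $\le 1$ since $\delta<1$ and the exponent is $\ge 0$) against the rest, leaving exactly the factor $(\sigma/6\kappa en^3)^t$ and the prefactor $8e$.

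The main obstacle is purely bookkeeping: one must choose the split of the $\delta$-exponent so that both the $m$-indexed product and the $t$-indexed product are simultaneously $\le 1$ (resp.\ $\le (\sigma/6\kappa en^3)^t$), and this forces the precise shape of the hypothesis $\delta \le (1/3\kappa)^{1/\eta}(\sigma/2en^3)^{1+1/\eta}$ — the exponent $1+1/\eta$ is exactly what makes $\delta^{\eta}(3\kappa)(2en^3/\sigma)^{1+\eta}\le 1$, and the extra factor of $3$ in $6\kappa$ versus $2$ (and the loss of an $n^3$) is the room one spends converting $(2en^3/\sigma)^{\tau(p)}$ into a clean $(\sigma/6\kappa en^3)^t$ after absorbing the $(3\kappa)^m$. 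I would carry out the exponent arithmetic carefully to confirm no stray factors of $n$, $\kappa$, or $e$ survive, and note that the nonnegative slack $\tau(p)-m-\eta m - \eta t \ge 0$ together with $\delta < 1$ is what lets us discard the leftover $\delta$-power.
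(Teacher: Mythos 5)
Your proposal is correct in substance and follows the only natural route; the paper itself offers no details (its proof reads ``It is a straightforward manipulation''), and your exponent bookkeeping is exactly that manipulation: from $\delta^{\eta}\leq(1/3\kappa)(\sigma/2en^3)^{1+\eta}$ one absorbs $\delta^{\eta m}$ into $(3\kappa)^m(2en^3/\sigma)^{(1+\eta)m}$, and the leftover $[\delta\cdot 2en^3/\sigma]^{\tau(p)-(1+\eta)m}\leq(\sigma/6\kappa en^3)^{t}$ since $\delta\cdot 2en^3/\sigma\leq(\sigma/6\kappa en^3)^{1/\eta}$ and $\tau(p)-(1+\eta)m\geq\eta t$. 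Two small slips: your side-condition check asserts $\sigma/2en^3<\sigma/4en^3$, which is backwards --- showing $\delta\leq\sigma/2en^3$ does not give $\delta<\sigma/4en^3$; you instead need $\delta\leq(\sigma/6\kappa en^3)^{1/\eta}\cdot\sigma/2en^3$ together with the (implicit, harmless) assumption that $\sigma/6\kappa en^3$ is bounded away from $1$. Also, there is no ``loss of an $n^3$'': the identity $\sigma/6\kappa en^3=(1/3\kappa)\cdot(\sigma/2en^3)$ is exact, so the factor $6\kappa$ comes purely from $3\kappa\times 2$.
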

\begin{proof}
  It is a straightforward manipulation.
\end{proof}

\subsection{Tubes around varieties}

Our proof of \Cref{thm:main} relies on a geometric characterization of the set
$\mathscr{C}_{\boldsymbol\varepsilon,\gamma}$.
Such characterization is known for the case $\boldsymbol\varepsilon=0,\gamma=0$,
corresponding to \emph{exactly} critical points.
It was shown in~\cite{Boumal2018}, see also~\cite{Cifuentes2019}, that
the existence of a spurious exactly critical point implies that
$C$ lies in a certain algebraic variety of~$\SS^n$, as follows:
\begin{align*}
  \exists \text{ ({spurious} exactly critical point) }
  \quad\implies\quad
  C \,\in\, \SS^n_{n-p}+\image \mathcal{A}^*,
\end{align*}
where
$ \SS^n_{n-p} \, := \, \{ X: \rank X \leq n{-}p\} $
is a variety of bounded rank matrices,
and $\image \mathcal{A}^*$ is the linear space spanned by $A_1,\dots,A_m$.
Hence, we have that
\[\mathscr{C}_{0,0} \;\subset\; \SS^n_{n-p} + \image \mathcal{A}^*.\]
When $\tau(p) \!>\nobreak\! m$, the variety $\SS^n_{n-p} {+} \image \mathcal{A}^*$ is properly contained in~$\SS^n$.
It follows that $\mathscr{C}_{0,0}$ has measure zero
and hence, for generic~$C$, there are no spurious exactly critical points.

We show below that for approximately critical points the situation is analogous,
except that we need to consider a \emph{tubular} neighborhood around the variety~$\SS^n_{n-p}\!+\!\image\mathcal{A}^*$.

\begin{definition}
  For a set $\mathcal{W}\subset \SS^n$ and a positive number $\delta\in \RR_+$,
  we define
  \begin{align*}
    \tube_\delta\mathcal{W}
    \,:=\, \{ X \in \SS^n: \|X \!-\! W \| \leq \delta \text{ for some } W\in \mathcal{W} \}.
  \end{align*}
\end{definition}

\begin{proposition}\label{thm:sufficient}
  Let
  $\delta := {\varepsilon_1 \|\mathcal{A}\|}/{\gamma}$
  and
  $B_\lambda := \{\lambda\!\in\!\RR^m: \|\lambda\| \!\leq\! R_\lambda \}$.
  Then
  \begin{gather*}
    \mathscr{C}_{\boldsymbol\varepsilon,\gamma}
    \;\subset\; \tube_\delta(\SS^n_{n-p}) + \mathcal{A}^*(B_\lambda)
    \;=\; \tube_\delta(\SS^n_{n-p} \!+\! \mathcal{A}^*(B_\lambda) ).
  \end{gather*}
\end{proposition}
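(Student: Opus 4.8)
The statement claims that any cost matrix $C$ admitting a spurious $(\boldsymbol\varepsilon,\gamma)$-AFAC pair lies in the tube of radius $\delta = \varepsilon_1\|\mathcal{A}\|/\gamma$ around $\SS^n_{n-p} + \mathcal{A}^*(B_\lambda)$. I would mimic the exact-critical argument from~\cite{Boumal2018,Cifuentes2019} but track the slack carefully to see where the tube radius comes from. So let $(Y,\lambda)$ be a spurious $(\boldsymbol\varepsilon,\gamma)$-AFAC pair with $\|Y\|\le R_Y$, $\|\lambda\|\le R_\lambda$, and let $S := S(\lambda) = C - \mathcal{A}^*(\lambda)$ be the slack. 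The goal is to produce a matrix $\widehat{S}$ with $\rank\widehat{S}\le n-p$ and $\|\widehat{S}-S\|\le\delta$; then $C = S + \mathcal{A}^*(\lambda) = \widehat{S} + (S-\widehat{S}) + \mathcal{A}^*(\lambda)$ exhibits $C$ as a point within distance $\delta$ of $\SS^n_{n-p}+\mathcal{A}^*(B_\lambda)$ (using $\lambda\in B_\lambda$), which is exactly the claimed containment. The final equality $\tube_\delta(\mathcal W) + V = \tube_\delta(\mathcal W + V)$ for a linear subspace $V$ is routine (translation invariance of the tube under the Minkowski sum with any set, really), so the content is the inclusion in the middle term.

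\textbf{Key steps.} First I would record the exact-critical mechanism: if $SY=0$ and $S\bullet UU^T\ge 0$ for all $U$ in the tangent directions, then the fact that $Y$ is rank-$p$ (generically) together with the second-order condition forces $S\succeq 0$ on a complementary subspace and $SY=0$ forces $\rank S\le n-p$; equivalently, $Y$ spans (at least) $p$ dimensions of $\ker S$. Now in the approximate setting I only have $\|SY\|\le\varepsilon_1$ and the relaxed second-order inequality $S\bullet UU^T\ge-\varepsilon_2$ for $\|U\|=1$ with $\|\mathcal{A}(UY^T)\|\le\gamma$. The plan is to argue that $S$ is \emph{approximately} negative-semidefinite-free on the large subspace: pick any unit vector $v$ in the span of the columns of $Y$ — more precisely, work with $U$ of the form $U = v\,w^T/\|v\,w^T\|$ for suitable $w$, or directly with the column space — and show the constraint $\|\mathcal{A}(UY^T)\|\le\gamma$ is satisfiable while $S\bullet UU^T$ picks out $v^TSv$ up to scaling. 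The quantitative link is: $\|\mathcal{A}(UY^T)\|\le\|\mathcal{A}\|\,\|UY^T\|\le\|\mathcal{A}\|\,\|U\|\,\|Y\|$, so choosing $U$ with $\|U\|$ small enough relative to $\gamma/(\|\mathcal{A}\|\|Y\|)$ keeps us inside the feasible cone of directions; meanwhile $\|SY\|\le\varepsilon_1$ says $Y^TSY$ is small, namely $\|Y^TSY\|\le\varepsilon_1\|Y\|$. Combining, on the $p$-dimensional column space of $Y$ the matrix $S$ is bounded below by something like $-\varepsilon_1\|\mathcal{A}\|/\gamma = -\delta$ in the PSD order after appropriate normalization, i.e. $P_Y\, S\, P_Y \succeq -\delta\, P_Y$ where $P_Y$ projects onto $\mathrm{col}(Y)$; and from $\|SY\|\le\varepsilon_1$ we also get the off-block pieces $P_Y S (I-P_Y)$ are controlled. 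The cleanest route is probably: set $\widehat S := S - P_Y S - S P_Y + P_Y S P_Y = (I-P_Y)S(I-P_Y)$, which automatically has rank $\le n-p$ since its range avoids $\mathrm{col}(Y)$, and then bound $\|\widehat S - S\| = \|P_Y S + S P_Y - P_Y S P_Y\|$ using $\|S P_Y\| = \|S Y Y^\dagger\|\le\|SY\|\,\|Y^\dagger\|$ — but this reintroduces a dependence on the smallest singular value of $Y$, which is not assumed bounded. So instead I expect the right choice is to use the direction-feasibility slack $\gamma$ to absorb the conditioning of $Y$: the second-order condition with its $\gamma$-tolerance is precisely what lets us avoid needing $Y$ to be well-conditioned, which is why the tube radius is $\varepsilon_1\|\mathcal{A}\|/\gamma$ and not $\varepsilon_1/\sigma_{\min}(Y)$.

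\textbf{The main obstacle.} The delicate point is exactly this interplay: translating $\|SY\|\le\varepsilon_1$ and the $\gamma$-relaxed second-order condition into a genuine low-rank matrix $\widehat S$ close to $S$, \emph{without} assuming $Y$ is well-conditioned. I expect the argument to go: either $Y$ has a reasonably-conditioned $p$-dimensional column space, in which case $\|SP_Y\|$ is directly controlled by $\varepsilon_1$ times a bound coming from the feasible-direction radius $\gamma$; or $Y$ is near-degenerate, but then we replace $\mathrm{col}(Y)$ by a nearby honest $p$-dimensional subspace $V$ on which the second-order inequality still certifies $S\succeq-\delta$ (because degenerate directions of $Y$ produce $UY^T$ of small norm, automatically satisfying $\|\mathcal{A}(UY^T)\|\le\gamma$, so \emph{more} directions are available, not fewer). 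Either way one lands on a $p$-dimensional subspace $V$ with $V^TSV\succeq-\delta\,V^TV$ and with the cross terms $\le\delta$; then $\widehat S$ is the compression of $S$ to $V^\perp$, which has rank $\le n-p$, and a short spectral estimate gives $\|S-\widehat S\|\le\delta$. Making the case split clean, and getting the constant in front of $\delta$ to be exactly $1$ (the statement has no slack in the radius), is where I'd spend the real effort; the rest — the reduction to the tube containment and the Minkowski-sum identity — is bookkeeping.
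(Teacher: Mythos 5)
There is a genuine gap: you never use the fact that the pair is \emph{spurious}, i.e.\ that $Y Y^T$ is \emph{not} $\boldsymbol\varepsilon'$-approximately optimal, and that hypothesis is the linchpin of the paper's argument. The paper's proof is three lines once one invokes the contrapositive of \Cref{thm:firstpart}: if $\sigma_p(Y) \le \gamma/\|\mathcal{A}\|$ then $Y Y^T$ would be approximately optimal, so spuriousness forces $\sigma_p(Y) > \gamma/\|\mathcal{A}\|$. Thus $Y$ \emph{is} well-conditioned, and the conditioning issue you identify as the ``main obstacle'' simply does not arise; one directly gets
$\dist(S,\SS^n_{n-p}) \le \|S Y\|/\sigma_p(Y) < \varepsilon_1\|\mathcal{A}\|/\gamma = \delta$,
and $C = S + \mathcal{A}^*(\lambda)$ lands in the tube. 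You correctly sense that degenerate directions of $Y$ make \emph{more} directions $U$ feasible in the second-order condition, but you draw the wrong conclusion from it: in the near-degenerate case those extra directions certify $S(\lambda) \succeq -\varepsilon_2 \id_n$ approximately, i.e.\ \emph{optimality} of $Y Y^T$ (this is exactly \Cref{thm:firstpart}), not closeness of $S$ to a rank-$(n-p)$ matrix. So your proposed case split would fail in that branch --- and it is precisely the branch that spuriousness rules out.

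A second, smaller error: your closing claim that a subspace $V$ with $V^T S V \succeq -\delta\, V^T V$ and small cross terms yields $\|S - \widehat S\| \le \delta$ for the compression $\widehat S = (I - P_V) S (I - P_V)$ is not right. A one-sided lower bound on the quadratic form does not control $\|S P_V\|$; to bound the distance to $\SS^n_{n-p}$ you need the $p$ smallest singular values of $S$ to be small, which requires an upper bound of the form $\|S v\| \le \delta$ on a $p$-dimensional subspace. That upper bound comes from $\|S Y\| \le \varepsilon_1$ \emph{divided by} $\sigma_p(Y)$ --- which again is why the lower bound on $\sigma_p(Y)$, supplied only by spuriousness, is indispensable. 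Your treatment of the Minkowski-sum identity at the end is fine.
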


The next lemma is the key ingredient for \Cref{thm:sufficient}.
It shows that if $Y$ is a spurious AFAC point, then its smallest singular value $\sigma_p(Y)$ is bounded from below.
This is a generalization of a previous result by Burer and Monteiro~\cite{Burer2005} about exactly critical points.
An analogue of this lemma is found in \cite[Lem.3.2]{Pumir2018} for the case of EFAC points.

\begin{lemma} \label{thm:firstpart}
  Let $Y$ be an $(\boldsymbol\varepsilon,\gamma)$-AFAC point of~\eqref{eq:bm}.
  If $\sigma_p(Y) \leq {\gamma}/{\|\mathcal{A}\|}$,
  then $Y Y^{T}$ is $\boldsymbol\varepsilon'$-approximately optimal for~\eqref{eq:sdp},
  with $\boldsymbol\varepsilon' := (\varepsilon_0,  R_Y \varepsilon_1, \varepsilon_2)$.
\end{lemma}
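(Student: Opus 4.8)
The goal is to show that if $Y$ is AFAC and its smallest singular value is small, then $YY^T$ is approximately optimal. The key insight is that when $\sigma_p(Y) \le \gamma/\|\mathcal{A}\|$, there is a unit vector $v$ in the (approximate) left-null space of $Y^T$, and one can use rank-one directions $U = uv^T$ to probe the second-order condition without violating the constraint $\|\mathcal{A}(UY^T)\|\le\gamma$.

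The plan is as follows. First I would set up the optimality conditions. From \eqref{eq:conditionfirst} we immediately get $\|\mathcal{A}(YY^T)-b\|\le\varepsilon_0$ and $\|S(\lambda)Y\|\le\varepsilon_1$; the first gives primal approximate feasibility directly. For the complementary slackness bound in \eqref{eq:approxoptimal}, I would observe $\|S(\lambda)\,YY^T\| \le \|S(\lambda)Y\|\,\|Y\| \le R_Y\varepsilon_1$ using $\|Y\|\le R_Y$, which yields the middle condition of \eqref{eq:approxoptimal} with the $R_Y\varepsilon_1$ factor exactly as claimed in $\boldsymbol\varepsilon'$. The only nontrivial part is the dual feasibility condition $S(\lambda)\succeq -\varepsilon_2\id_n$.

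For that, let $v\in\RR^n$ be a unit right-singular vector of $Y^T$ (equivalently left-singular vector of $Y$) corresponding to the smallest singular value $\sigma_p(Y)$, so that $\|v^TY\| = \|Y^Tv\| = \sigma_p(Y)$. Wait — more carefully, since $Y$ is $n\times p$ with $p\le n$, $Y$ has $p$ singular values and $\sigma_p(Y)$ is the smallest; I want a unit vector $v\in\RR^n$ with $\|Y^Tv\|=\sigma_p(Y)$, which is the left-singular vector for $\sigma_p$. For an arbitrary unit vector $u\in\RR^n$, consider the test matrix $U = u v^T \in\RR^{n\times p}$, which has $\|U\|_F = 1$. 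Then $UY^T = uv^TY^T = u(Yv)^T$, wait that's not right either — $v^TY^T = (Yv)^T$, so $UY^T = u(Yv)^T$. Hmm, but I want $Y^Tv$ small, so I should take $U = uv^T$ where actually I need to be careful which side. Let me just say: choose unit $v$ with $Yv$... no. The cleanest: the constraint in \eqref{eq:conditionsecond} involves $\mathcal{A}(UY^T)$, and $UY^T = (uv^T)Y^T = u(v^TY^T)$. The matrix $v^TY^T$ is a row vector $(Yv)^T$. So I actually want $Yv$ small, i.e. $v$ a right-singular vector of $Y$ for $\sigma_p$; then $\|UY^T\|_F = \|u\|\,\|Yv\| = \sigma_p(Y) \le \gamma/\|\mathcal{A}\|$, hence $\|\mathcal{A}(UY^T)\| \le \|\mathcal{A}\|\,\|UY^T\|_F \le \gamma$, so $U$ is admissible in \eqref{eq:conditionsecond}.

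Then \eqref{eq:conditionsecond} gives $S(\lambda)\bullet UU^T \ge -\varepsilon_2$. Compute $UU^T = uv^Tvu^T = \|v\|^2 uu^T = uu^T$, so $S(\lambda)\bullet UU^T = u^TS(\lambda)u$. Thus $u^TS(\lambda)u \ge -\varepsilon_2$ for every unit vector $u$, which is precisely $S(\lambda)\succeq -\varepsilon_2\id_n$. Assembling the four conditions of \eqref{eq:approxoptimal}, $YY^T$ is $\boldsymbol\varepsilon'$-approximately optimal with $\boldsymbol\varepsilon'=(\varepsilon_0, R_Y\varepsilon_1, \varepsilon_2)$, completing the proof. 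The main (and only) subtlety is getting the singular-vector bookkeeping right so that the rank-one test direction both has unit Frobenius norm and satisfies the constraint-tolerance inequality $\|\mathcal{A}(UY^T)\|\le\gamma$; everything else is a short norm estimate.
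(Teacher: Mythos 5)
Your proposal is correct and follows essentially the same route as the paper: the nontrivial step is exactly the rank-one test direction $U=uz^T$ with $z\in\RR^p$ the right-singular vector of $Y$ achieving $\sigma_p(Y)$, which makes $U$ admissible in \eqref{eq:conditionsecond} and turns $S(\lambda)\bullet UU^T$ into $u^TS(\lambda)u$. The only blemish is the back-and-forth over whether the singular vector lives in $\RR^n$ or $\RR^p$ — the version you settle on (a unit $v\in\RR^p$ with $\|Yv\|=\sigma_p(Y)$) is the right one and matches the paper.
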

\begin{proof}
  Let $(Y,\lambda)$ satisfy~\eqref{eq:approxcritical}, and let us show that $(Y Y^T, \lambda)$ satisfies~\eqref{eq:approxoptimal}.
  The first three conditions in~\eqref{eq:approxoptimal} are easy to check.
  We proceed to show the last one: $S(\lambda) \!\succeq\! -\varepsilon_2 \id_n$.
  Given a unit vector $u \!\in\! \RR^n$, we need to show that $u^T S(\lambda) u \!\geq\! -\varepsilon_2$.
  There is a unit vector $z\!\in\! \RR^p$ such that $\|Y z\| = \sigma_p(Y)$.
  The matrix $U \!:=\! u z^T$ satisfies $\|U\|\!=\!1$ and
  \begin{align*}
    \|U Y^T\| \leq \|u\| \|Yz\| = \sigma_p(Y) \leq \gamma/\|\mathcal{A}\|.
  \end{align*}
  Then $\|\mathcal{A}(U Y^T)\| \!\leq\! \gamma$,
  so by~\eqref{eq:conditionsecond} we have
  \[
    -\varepsilon_2
    \,\leq\, S(\lambda)\!\bullet\! U U^T
    \,=\, \|z\|^2 (u^T S(\lambda) u)
    \,=\, u^T S(\lambda) u.
    \qedhere
  \]
\end{proof}

\begin{proof}[Proof of \Cref{thm:sufficient}]
  Let $C \in \mathscr{C}_{\boldsymbol\varepsilon,\gamma}$,
  so there is a spurious $(\boldsymbol\varepsilon,\gamma)$-AFAC pair $(Y,\lambda)$.
  By \Cref{thm:firstpart}, we must have $\sigma_p(Y)\!>\!\gamma/\|\mathcal{A}\|$.
  Let $S := S(\lambda)$,
  and note that $\|S\, Y\| \!\leq\! \varepsilon_1 \!=\! \gamma\, \delta/\|\mathcal{A}\| $ by~\eqref{eq:conditionfirst}.
  Then
  \begin{align}\label{eq:distS}
    \dist(S,\,\SS^n_{n-p})
    \,=\, \sqrt{ \textstyle\sum_{i=1}^p \sigma^2_{n-i+1}(S) }
    \,\leq\, \|S\, Y\| / \sigma_p(Y)
    \,<\, \delta,
  \end{align}
  and hence
  $
    C \,=\,
    S(\lambda) \!+\! \mathcal{A}^*(\lambda)
    \,\in\, \tube_\delta( \SS^n_{n-p}) \!+\! \mathcal{A}^*(B_\lambda).
  $
\end{proof}

By \Cref{thm:sufficient},
the set of ``bad'' cost matrices $\mathscr{C}_{\boldsymbol\varepsilon,\gamma}$ is contained in a tube around a variety.
To prove \Cref{thm:main}, we need an upper bound on the probability mass of this tube.
The computation of integrals over tubes has a long history in differential geometry~\cite{Gray2012}.
Effective bounds for these integrals were shown in~\cite{Burgisser2008,Lotz2015,Basu2021};
they were used for smooth analysis in the first reference.
We will use the following recent bound.

\begin{theorem}[{\cite[Thm.1.1]{Basu2021}}] \label{thm:tube}
  Let $V\subset\RR^k$ be a real variety of codimension~$c$
  defined by polynomials of degree at most~$D$.
  Let $x$ be uniformly distributed on the Euclidean ball $\ball{\bar x}{\sigma}\subset \RR^k$.
  Then 
  \begin{align}\label{eq:tube}
    \Pr[ x \in \tube_\delta(V) ]
    \,\leq\,
    4 \left(\frac{4 k D \delta}{\sigma}\right)^{\!c\!}
    \left(1 + \frac{(4D{+}1)\delta}{\sigma}\right)^{\!k-c\!}
    \,\leq\, {4 e} \left(\frac{4 k D \delta}{\sigma}\right)^{\!c\!},
  \end{align}
  where in the second inequality we require that
  $\sigma/\delta \geq (4D{+}1)(k{-}c)$.
\end{theorem}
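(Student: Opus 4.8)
The plan is to derive the first inequality of \eqref{eq:tube} from the tube-volume estimates of \cite{Burgisser2008,Lotz2015,Gray2012}, and then to obtain the second inequality by an elementary estimate of the resulting sum.

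For the first inequality I would begin with a normalization: translating by $-\bar x$ and rescaling by $\sigma^{-1}$, we may assume $\bar x=0$ and $\sigma=1$ at the cost of replacing $\delta$ by $\delta/\sigma$, since this substitution sends $V$ to a variety cut out by polynomials of the same degree $D$ and the same codimension $c$. In this form $\Pr[x\in\tube_\delta(V)]$ equals the ratio of the volume of the $(\delta/\sigma)$-neighborhood of the normalized variety inside the unit ball to $\vol(\ball{0}{1})$, which is exactly the quantity bounded in Lotz's volume-of-tubes theorem. The ingredients there are a refined B\'ezout estimate bounding the geometric degree of $V$ by $D^{c}$, the classical Weyl tube formula \cite{Gray2012}, and an integral-geometric argument as in \cite{Burgisser2008}; putting these together yields $\Pr[x\in\tube_\delta(V)]\le 4\sum_{i=c}^{k}\binom{k}{i}(2D\delta/\sigma)^{i}(1+\delta/\sigma)^{k-i}$. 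If the cited statement is phrased on a sphere rather than on a ball, I would first pass to a sphere in one higher dimension by homogenizing/coning, which does not change the estimate.

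For the second inequality, set $a:=2D\delta/\sigma$, $b:=\delta/\sigma$ and $t_i:=\binom{k}{i}a^{i}(1+b)^{k-i}$, so that the sum is $\sum_{i=c}^{k}t_i$. For every $i\ge c$ the ratio of consecutive terms is $t_{i+1}/t_i=\frac{k-i}{i+1}\cdot\frac{a}{1+b}\le\frac{k}{c}\,a$, and the hypothesis $4ekD\delta/(c\sigma)\le1$ says precisely that $\frac{k}{c}\,a\le\frac{1}{2e}<\frac12$; hence $\sum_{i=c}^{k}t_i\le t_c\sum_{j\ge0}(2e)^{-j}=\frac{2e}{2e-1}\,t_c\le 2\,t_c$. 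Finally $\binom{k}{c}\le(ek/c)^{c}$, and the hypothesis $k\delta/\sigma\le1$ gives $(1+b)^{k-c}\le(1+b)^{k}\le e^{bk}\le e$, so $t_c\le e\,(ek/c)^{c}a^{c}=e\,(2ekD\delta/(c\sigma))^{c}$; multiplying by $4$ gives the claimed bound $8e\,(2ekD\delta/(c\sigma))^{c}$.

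The main obstacle is the first inequality: the bounds in \cite{Lotz2015} are phrased in the intrinsic language of Weyl's formula and are sometimes stated only for smooth varieties or only on spheres, so the real work is in reducing our Euclidean, ball-based, singularity-allowed situation to that form and tracking the explicit constants $2D$ and $1+\delta/\sigma$. Once those bounds are in hand, the passage to the second, cleaner inequality is routine under the stated smallness conditions on $\delta/\sigma$.
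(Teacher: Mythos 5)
Your argument for the second inequality is correct and essentially the same as the paper's, just packaged differently: the paper bounds $(1+\delta/\sigma)^{k-i}\le (1+\delta/\sigma)^k\le e$ uniformly (using $k\delta/\sigma\le 1$), applies $\binom{k}{i}\le (ek/i)^i\le (ek/c)^i$, and then sums a geometric series in $y:=2ekD\delta/c\sigma\le 1/2$ to get $4e\cdot y^c/(1-y)\le 8e\,y^c$; you instead bound the ratio $t_{i+1}/t_i$ and sum a geometric series starting at $t_c$, then bound $t_c$ at the end. Both routes are elementary and give the same constant.

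The gap is in the first inequality. You essentially write ``this is exactly the quantity bounded in Lotz's theorem'' and then flag, correctly, that the cited bound may not apply verbatim; but you never supply the missing step, and the missing step is not merely a matter of normalizing the ball or ``tracking constants.'' The concrete issue is that the tube bound in \cite{Lotz2015} (with the explicit factor $(2D\delta/\sigma)^i(1+\delta/\sigma)^{k-i}$) is stated for varieties that are \emph{complete intersections}, i.e.\ cut out by exactly $c=\codim V$ polynomials. A general real variety $V=V(f)$ with $f\in\RR[x]^m$ and $m>c$ need not be of this form, and one cannot simply discard extra equations without potentially enlarging $V$ in an uncontrolled way. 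The paper's fix is Bertini's theorem: for a generic $A\in\RR^{c\times m}$, the system $f'=Af\in\RR[x]^c$ defines a complete intersection $V(f')$ of the same codimension $c$ and degree $\le D$, and $V\subset V(f')$, so $\tube_\delta(V)\subset\tube_\delta(V(f'))$ and the complete-intersection bound transfers monotonically. Without this reduction the claimed bound is not justified for general $V$. (Your suggested alternative of homogenizing to a cone on a sphere would also change the degree and does not obviously resolve the complete-intersection issue, so it is not a substitute for the Bertini step.)
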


\begin{proof}[Proof of \Cref{thm:main}]
  Let $B_{\lambda}\subset\RR^m$ be the of radius~$R_\lambda$ centered at zero.
  Consider an $\epsilon$-net $\mathcal{N}$ of~$B_{\lambda}$,
  where $\epsilon := \delta/\|\mathcal{A}\|$.
  It is known that $(3 R_\lambda/\epsilon)^m = (3 \kappa/\delta)^m$ points suffices for the $\epsilon$-net.
  Observe that
  \begin{subequations}\label{eq:ballunion}
  \begin{gather}
    \mathcal{A}^*(B_{\lambda})
    \,\subset\,
    \mathcal{A}^*(\tube_\epsilon(\mathcal{N}))
    \,\subset\,
    \tube_\delta(\mathcal{A}^*(\mathcal{N})),
    \\
    \mathscr{C}_{\boldsymbol\varepsilon,\gamma}
    \,\subset \,
    \tube_\delta(\SS^n_{n-p}) \!+\! \mathcal{A}^*(B_{\lambda})
    \,\subset\,
    \tube_{2\delta}(\SS^n_{n-p}) \!+\!
    \mathcal{A}^*(\mathcal{N}).
  \end{gather}
  \end{subequations}
  Recall that $\SS^n_{n-p}$ is a variety of codimension~$\tau(p)$ defined by equations of degree $n{-}p{+}1$.
  For any $\ell\in \mathcal{N}$,
  \Cref{thm:tube} gives
  \begin{align*}
    \Pr[ C{-}\mathcal{A}^*(\ell) \in \tube_{2\delta}(\SS^n_{n\!-\!p}) ]
    \,<\,
    4 e \!\left(\!\frac{8 \tau(n) (n{-}p{+}1) \delta}{\sigma}\!\right)^{\!\!\tau(p)\!}
    \,<\,
    4 e \!\left(\!\frac{4 n^3 \delta}{\sigma}\!\right)^{\!\!\tau(p)\!}.
  \end{align*}
  Finally, the union bound gives
  \begin{align*}
    \Pr[ C \in \mathscr{C}_{\boldsymbol\varepsilon,\gamma} ]
    &\,\leq\,
    \#\mathcal{N} \cdot
    \Pr[ C \in \tube_{2\delta}(\SS^n_{n\!-\!p}){+}\mathcal{A}^*(\ell) ]
    \\
    &\,<\,
    \left(3\kappa/\delta\right)^m \cdot 4 e \left(4 n^3 \delta/ \sigma\right)^{\tau(p)}.
    \qedhere
  \end{align*}
\end{proof}

\section{Feasibility of critical points}\label{s:feasibility}

In this section we address the computation of a feasible solution for~\eqref{eq:sdp}.
To do so, we consider the least squares problem~\eqref{eq:ls}
and its Burer-Monteiro relaxation~\eqref{eq:ls-bm}.
Note that if~\eqref{eq:sdp} is feasible, then the optimal value of~\eqref{eq:ls} is zero.
Nevertheless, the results of this section apply to an arbitrary instance of~\eqref{eq:ls},
even if the optimal value is nonzero.

We consider a smoothed analysis setting in which $b$ is fixed,
and~$\mathcal{A}$ is subject to a small random perturbation.
We will show that problem~\eqref{eq:ls-bm} has no \emph{spurious} approximately critical (AC) points with high probability.
This means that any AC point of~\eqref{eq:ls-bm} is approximately optimal for~\eqref{eq:ls}.
We will restrict our attention to AC points~$Y$ of bounded norm.
Hence, we assume that $\|Y\| \!\leq\! R_Y$ for some fixed constant~$R_Y$.

\subsection{Spurious approximately critical points}
We proceed to describe the optimality conditions for~\eqref{eq:ls} and~\eqref{eq:ls-bm}.
Let $f(X) := \|\mathcal{A}(X){-}b\|^2$ be the least squares objective function.
The optimality conditions for the convex problem~\eqref{eq:ls} are:
\begin{align*}
  S(X) \, X = 0,
  \qquad
  X \succeq 0,
  \qquad
  S(X) \succeq 0,
\end{align*}
where $S(X)$ is the gradient of the objective function
\begin{align*}
  S(X) \,:=\, \nabla f(X)
  \,=\, 2 \mathcal{A}^*(\mathcal{A}(X) \!-\! b) \,\in\, \SS^n.
\end{align*}
We call $X$ approximately optimal if either
$f(X)$ is very close to zero,
or the above conditions are approximately satisfied.

\begin{definition}
  Let $\boldsymbol\varepsilon\!=\!(\varepsilon_0,\varepsilon_1,\varepsilon_2)\!\in\! \RR_+^3$.
  We say that $X\!\in\!\SS^n$ is $\boldsymbol\varepsilon$-\emph{approximately optimal} for~\eqref{eq:ls} if $X\succeq 0$ and either
  \begin{gather} \label{eq:ls-approxoptimal}
    \|\mathcal{A}(X){-}b\| \leq \varepsilon_0
    \quad\text{ or }\quad
    \left(\,\|S(X)  X\| \leq \varepsilon_1
      \,\text{ and }\,
    S(X) \succeq -\varepsilon_2\, \id_n\,\right).
  \end{gather}
\end{definition}

The following lemma bounds the optimality gap for the second case in~\eqref{eq:ls-approxoptimal}.

\begin{lemma} \label{thm:ls-approxoptimal}
  Let $\bar X\!\in\!\SS_+^n$ such that
  $\|S(\bar X) \bar X\| \leq \varepsilon_1$,
  $S(\bar X) \succeq -\varepsilon_2 \id_n$.
  Then
  \begin{align*}
    f(\bar X) \,\leq\, f(X) + \varepsilon_1\sqrt{n} + \varepsilon_2 \|X\| \sqrt{n}
    \qquad\forall\; X \in \SS^n_+.
  \end{align*}
\end{lemma}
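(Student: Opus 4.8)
The plan is to mirror the argument of \Cref{thm:approxoptimal}, which handled the analogous gap bound for the constrained problem~\eqref{eq:sdp}, but now with the simpler objective $f(X) = \|\mathcal{A}(X)-b\|^2$ whose gradient is exactly $S(X)$. The key structural fact I would exploit is that $f$ is convex and quadratic in~$X$, so it lies above its first-order Taylor expansion at~$\bar X$: for every $X$ we have $f(X) \geq f(\bar X) + S(\bar X)\bullet(X-\bar X)$. Rearranging, $f(\bar X) \leq f(X) - S(\bar X)\bullet X + S(\bar X)\bullet \bar X$.

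The next step is to bound the two inner-product terms using the hypotheses. For the term $S(\bar X)\bullet \bar X$: since $\|S(\bar X)\bar X\| \leq \varepsilon_1$ in Frobenius norm and $\bar X \succeq 0$, I would write $S(\bar X)\bullet \bar X = \mathrm{tr}(S(\bar X)\bar X) \leq \|S(\bar X)\bar X\|_*  \leq \sqrt{n}\,\|S(\bar X)\bar X\| \leq \varepsilon_1\sqrt n$, exactly as in the third displayed chain of the proof of \Cref{thm:approxoptimal}. For the term $-S(\bar X)\bullet X$: from $S(\bar X) \succeq -\varepsilon_2 \id_n$ and $X \succeq 0$ we get $S(\bar X)\bullet X \geq -\varepsilon_2\,\id_n\bullet X = -\varepsilon_2\,\mathrm{tr}(X) \geq -\varepsilon_2 \|X\|\sqrt n$, so $-S(\bar X)\bullet X \leq \varepsilon_2 \|X\|\sqrt n$. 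Combining the three displays yields $f(\bar X) \leq f(X) + \varepsilon_1\sqrt n + \varepsilon_2\|X\|\sqrt n$, which is the claim.

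I do not anticipate a serious obstacle here; the only points requiring a little care are (i) justifying the tangent-line inequality, which follows because $f$ is a convex quadratic — concretely $f(X) - f(\bar X) - S(\bar X)\bullet(X-\bar X) = \|\mathcal{A}(X-\bar X)\|^2 \geq 0$ — and (ii) the two norm inequalities $\mathrm{tr}(M) \leq \sqrt n \|M\|$ (for the nuclear-vs-Frobenius comparison on $S(\bar X)\bar X$) and $\mathrm{tr}(X) \leq \sqrt n \|X\|$ for $X \succeq 0$, both of which are the standard $\|\cdot\|_* \leq \sqrt n\,\|\cdot\|_F$ estimate in dimension~$n$. I would present it compactly as a single chain of displayed inequalities in the style already used for \Cref{thm:approxoptimal}, without a separate treatment of feasibility since~\eqref{eq:ls} has no linear constraints.
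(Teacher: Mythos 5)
Your proposal is correct and is essentially the paper's own argument: the paper's auxiliary function $L(X) = f(X) - S(\bar X)\bullet X$ together with the inequality $L(X)\geq L(\bar X)$ is exactly your first-order convexity (tangent-line) inequality in disguise, and the two trace bounds $S(\bar X)\bullet\bar X \leq \|S(\bar X)\bar X\|_* \leq \varepsilon_1\sqrt n$ and $S(\bar X)\bullet X \geq -\varepsilon_2\,\mathrm{tr}(X) \geq -\varepsilon_2\|X\|\sqrt n$ are identical to those in the paper. No gaps.
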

\begin{proof}
  Let $L(X) := f(X) - \bar S\bullet X$, with $\bar S:= S(\bar X)$.
  This is a convex function with $\nabla L(\bar X)=0$,
  so $\bar X$ is its global minimum.
  Note that
  \begin{gather*}
    f(X)
    \,=\, L(X) + \bar S\bullet X
    \,\geq\, L(X) - (\varepsilon_2 \id_n)\bullet X
    \,=\, L(X) - \varepsilon_2 \|X\| \sqrt{n},
    \\
    L(\bar X)
    \,=\, f(\bar X) - \bar S \bullet \bar X
    \,\geq\, f(\bar X) - \|\bar S \bar X\|_*
    \,\geq\, f(\bar X) - \varepsilon_1 \sqrt{n}.
  \end{gather*}
  Since $L(X) \geq L(\bar X)$, the result follows from the above equations.
\end{proof}

We proceed now to the Burer-Monteiro problem~\eqref{eq:ls-bm}.
This is a special instance of the unconstrained optimization problem~\eqref{eq:unconstrained}.
The criticality conditions for~\eqref{eq:ls-bm} are obtained by specializing~\eqref{eq:conditionsunconstrainedapprox}.

\begin{definition}
Let $(\varepsilon_1,\varepsilon_2) \!\in\! \RR_+^2$.
We say that $Y\!\in\!\RR^{n\times p}$ is $(\varepsilon_1,\varepsilon_2)$-\emph{approximately 2-critical} (AC) for~\eqref{eq:ls-bm} if
\begin{subequations}\label{eq:ls-approxcritical}
\begin{gather}
  \|S(Y Y^T)  Y \| \leq \varepsilon_1,
  \\
  \label{eq:ls-conditionsecond}
  S(Y Y^T) \bullet U U^T + 4 \|\mathcal{A}(U Y^T)\|^2
  \!\geq\! -\varepsilon_2,
  \quad\forall\, U\!\in\! \RR^{n\times p},\, \|U\|\!=\!1
\end{gather}
\end{subequations}
\end{definition}

We are ready to formalize the concept of spurious critical points.

\begin{definition}
  Let $R_Y$ be fixed.
  For an arbitrary $\boldsymbol\varepsilon \!=\! (\varepsilon_0,\varepsilon_1,\varepsilon_2) \!\in\! \RR_+^3$,
  we say that a point $Y$ is \emph{spurious} $\boldsymbol\varepsilon$-AC if:
  \begin{itemize}
    \item
      $Y$ is $(\varepsilon_1,\varepsilon_2)$-AC for~\eqref{eq:ls-bm}.
    \item
      $Y Y^T$ is not $\boldsymbol\varepsilon'$-approx.\ optimal for~\eqref{eq:ls},
      with
      $\boldsymbol\varepsilon' := (\varepsilon_0,R_Y \varepsilon_1, 5\varepsilon_2)$.
    \item $\|Y\|\leq R_Y$.
  \end{itemize}
  A pair $(Y,\lambda)$ is spurious \emph{exactly} critical if the above holds for $\boldsymbol\varepsilon=0$.
\end{definition}

\subsection{Statement of the theorem}

We assume here that $b\in\RR^m$ is fixed, and we vary the linear map~$\mathcal{A}$.
We identify $\mathcal{A}$ with the tuple of matrices $(A_1,\dots,A_m)$,
and hence view it as an element of the Euclidean space~$(\SS^n)^m$.
We assume that $\mathcal{A}$ is \emph{uniformly} distributed on the Euclidean ball $\ball{\bar{\mathcal{A}}}{\sigma}\subset (\SS^n)^m$ of radius~$\sigma$ centered at~$\bar{\mathcal{A}}$.
Consider the set $\mathscr{A}_{\boldsymbol\varepsilon} \subset (\SS^n)^m$,
consisting of all~$\mathcal{A}$ for which there is a spurious AC point:
\begin{align*}
  \mathscr{A}_{\boldsymbol\varepsilon}
  \;\;:=\;\; \left\{ \mathcal{A} \in (\SS^n)^m \,:
      \exists\, Y \text{ a spurious $\boldsymbol\varepsilon$-AC point}
  \right\}.
\end{align*}
We show that if $\tau(p)>m$,
then the probability
$\Pr[ \mathcal{A} \in \mathscr{A}_{\boldsymbol\varepsilon} ]$
is vanishingly small as the ratio $\varepsilon_1/\sqrt{\varepsilon_2}$ goes to zero.

\begin{theorem}[critical $\Rightarrow$ feasible] \label{thm:ls-main}
  Let $p$ such that $\tau(p) {>} m$,
  and $\boldsymbol\varepsilon \!\in\! \RR^3_+$.
  Let $\mathcal{A}$ be uniformly distributed on the Euclidean ball~$\ball{\bar{\mathcal{A}}}{\sigma}$.
  Then
  \begin{align*}
    \Pr[\mathcal{A}\in \mathscr{A}_{\boldsymbol\varepsilon}]
    \;\leq\;
    4 e \,  \delta^{\tau(p)-m} \,(3\kappa)^m\,(2 n^3 m/\sigma \varepsilon_0)^{\tau(p)},
  \end{align*}
  where $\delta := \varepsilon_1 R_A / \sqrt{\varepsilon_2}$,
  $\kappa := 2(R_A R_Y^2 \!+\! \|b\|)R_A$,
  and $R_A := \|\bar{\mathcal{A}}\|\!+\!\sigma$,
  provided that
  $\delta < \sigma\varepsilon_0/4 n^3 m$.
\end{theorem}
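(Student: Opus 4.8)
The plan is to mirror the structure used for \Cref{thm:main}: first establish a deterministic geometric characterization of the bad set $\mathscr{A}_{\boldsymbol\varepsilon}$ as (a subset of) a tube around a low-rank variety, then apply the volume bound of \Cref{thm:tube} together with an $\epsilon$-net over the multipliers. The first step is the analogue of \Cref{thm:firstpart} and \Cref{thm:sufficient}: I would show that if $Y$ is a spurious $\boldsymbol\varepsilon$-AC point of~\eqref{eq:ls-bm}, then its $p$-th singular value cannot be too small. Indeed, suppose $\sigma_p(Y)$ is small; take a unit vector $z\in\RR^p$ with $\|Yz\|=\sigma_p(Y)$, and for an arbitrary unit $u\in\RR^n$ set $U:=uz^T$. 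Then $\|\mathcal{A}(UY^T)\|\le\|\mathcal{A}\|\,\sigma_p(Y)$ is small, so the second-order condition~\eqref{eq:ls-conditionsecond} forces $u^TS(YY^T)u\ge -\varepsilon_2 - 4\|\mathcal{A}\|^2\sigma_p(Y)^2$; since $u$ is arbitrary this gives $S(YY^T)\succeq -(\varepsilon_2+\text{small})\id_n$, i.e. the second-order optimality clause of~\eqref{eq:ls-approxoptimal} with $5\varepsilon_2$ in place of $\varepsilon_2$ (the factor $5$ absorbing the extra $4\|\mathcal{A}\|^2\sigma_p(Y)^2$ term once $\sigma_p(Y)\le\sqrt{\varepsilon_2}/\|\mathcal{A}\|$). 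Together with $\|S(YY^T)Y\|\le\varepsilon_1\le R_Y\varepsilon_1$ this would make $YY^T$ $\boldsymbol\varepsilon'$-approximately optimal, contradicting spuriousness. Hence $\sigma_p(Y) > \sqrt{\varepsilon_2}/\|\mathcal{A}\|$.

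Next, with $\sigma_p(Y)$ bounded below, I would argue that $S(YY^T)$ is close to the variety $\SS^n_{n-p}$: exactly as in~\eqref{eq:distS}, $\dist(S(YY^T),\SS^n_{n-p}) \le \|S(YY^T)Y\|/\sigma_p(Y) < \varepsilon_1\|\mathcal{A}\|/\sqrt{\varepsilon_2} = \delta$. Now the key difference from \Cref{thm:main}: here $S(YY^T) = 2\mathcal{A}^*(\mathcal{A}(YY^T)-b)$ depends on $\mathcal{A}$ \emph{nonlinearly}, and it is $\mathcal{A}$ that is random, not $C$. So I would rewrite the event ``$\dist(2\mathcal{A}^*(\mathcal{A}(YY^T)-b),\SS^n_{n-p})<\delta$'' by letting $\lambda := 2(\mathcal{A}(YY^T)-b)\in\RR^m$ and noting that $\|\lambda\| \le 2(\|\mathcal{A}\|\|Y\|^2+\|b\|) \le 2(R_A R_Y^2+\|b\|) =: \kappa/R_A$, using $\|\mathcal{A}\|\le R_A$ on the ball $\ball{\bar{\mathcal{A}}}{\sigma}$. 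Then $\mathcal{A}^*(\lambda)=\sum_i\lambda_i A_i$ is linear in $\mathcal{A}$ for fixed $\lambda$, and the condition becomes: there exist a unit-spectrum direction and a bounded $\lambda$ such that $\mathcal{A}$ lies (after the linear map $\mathcal{A}\mapsto\mathcal{A}^*(\lambda)$) within $\delta$ of $\SS^n_{n-p}$. I would discretize $\lambda$ over an $\epsilon$-net $\mathcal{N}\subset B_\lambda\subset\RR^m$ of the ball of radius $\kappa/R_A$ with $\epsilon := \delta/R_A$ (so $(3\kappa/\delta)^m$ points suffice), and observe $\|\mathcal{A}^*(\lambda)-\mathcal{A}^*(\ell)\|\le R_A\|\lambda-\ell\|\le\delta$ when $\ell\in\mathcal{N}$ is the nearest net point, inflating the tube to radius $2\delta$. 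For each fixed $\ell$, the map $\mathcal{A}\mapsto 2\mathcal{A}^*(\ell)$ is linear from $(\SS^n)^m$ to $\SS^n$ — but \emph{not} norm-preserving or even injective in general, so I cannot directly invoke \Cref{thm:tube} on $(\SS^n)^m$. The cleanest route is to note that the preimage under a linear map of a tube of radius $2\delta$ around a variety $V$ of codimension $c$ and degree $D$ is contained in a tube around the preimage variety; alternatively, and more robustly, I would instead bound things coordinate by coordinate or use that only the ``$A_1$-slice'' (say) needs to vary — but the honest approach is to fix all but enough freedom and reduce to \Cref{thm:tube} in $\RR^k$ with $k=\tau(n)$, $c=\tau(p)$, $D=n-p+1$, exactly as in the proof of \Cref{thm:main}, obtaining a per-net-point bound of roughly $8e(2e n^3\delta/(\sigma\varepsilon_0))^{\tau(p)}$ — here the extra $\varepsilon_0$ in the denominator is the price of converting the perturbation of the nonlinear object $\mathcal{A}$ into an effective perturbation of $\mathcal{A}^*(\ell)$, via the case split in~\eqref{eq:ls-approxoptimal} that lets us assume $\|\mathcal{A}(YY^T)-b\|>\varepsilon_0$.

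Finally I would assemble the pieces with a union bound: $\Pr[\mathcal{A}\in\mathscr{A}_{\boldsymbol\varepsilon}] \le \#\mathcal{N}\cdot 8e(2en^3\delta/(\sigma\varepsilon_0))^{\tau(p)} \le (3\kappa/\delta)^m\cdot 8e(en^3/(\sigma\varepsilon_0))^{\tau(p)}\delta^{\tau(p)} = 8e\,\delta^{\tau(p)-m}(3\kappa)^m(en^3/\sigma\varepsilon_0)^{\tau(p)}$, after collecting constants, with the side condition $\delta<\sigma\varepsilon_0/4en^3$ coming from the hypothesis of \Cref{thm:tube}. I expect the main obstacle to be step two: the fact that $\mathcal{A}$ enters $S(YY^T)$ both through the linear slack piece $\mathcal{A}^*(\lambda)$ and through the point $\lambda=2(\mathcal{A}(YY^T)-b)$ at which it is evaluated. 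The trick is that once we condition on $Y$ (handled by the net over $\lambda$, since $\lambda$ encodes the only way $Y$ enters), the remaining randomness acts linearly, and the lower bound $\|\mathcal{A}(YY^T)-b\|>\varepsilon_0$ available in the non-trivial case of~\eqref{eq:ls-approxoptimal} is exactly what rescales the random perturbation of $\mathcal{A}$ into a perturbation of $\mathcal{A}^*(\lambda)$ of comparable magnitude — this is the source of the $\varepsilon_0$ factor that distinguishes this bound from \Cref{thm:main}.
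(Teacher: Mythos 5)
Your proposal is correct and follows essentially the same path as the paper: lower-bound $\sigma_p(Y)$ via the second-order condition, deduce $\mathcal{A}^*(\lambda) = S(YY^T) \in \tube_\delta(\SS^n_{n-p})$ with $\lambda := 2(\mathcal{A}(YY^T)-b)$ lying in the annulus $2\varepsilon_0 \leq \|\lambda\| \leq R_\lambda$ (the lower bound from spuriousness ruling out the first clause of approximate optimality), discretize $\lambda$ over an $\epsilon$-net, and apply the tube volume bound, with the $\varepsilon_0$ in the denominator entering exactly as you say. The one step you leave imprecise --- how to pull the tube condition in $\SS^n$ back to the ambient $(\SS^n)^m$ through the non-injective linear map $\phi_\ell : \mathcal{A} \mapsto \mathcal{A}^*(\ell)$ --- the paper resolves by observing that $\|\ell\|^{-1}\phi_\ell$ restricts to an isometry $(\ker\phi_\ell)^\perp \cong \SS^n$, so $\phi_\ell^{-1}(\tube_{2\delta}(\SS^n_{n-p})) = \tube_{2\delta/\|\ell\|}(V_\ell)$ where $V_\ell := \phi_\ell^{-1}(\SS^n_{n-p})$ is a cylinder of the same codimension $\tau(p)$ and degree $n-p+1$, to which \Cref{thm:tube} applies directly in $(\SS^n)^m$; this is cleaner than your ``fix a coordinate slice'' alternative, which would have to cope with the marginal of the uniform ball distribution no longer being uniform.
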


As before, we can derive a high probability bound with $\delta$ polynomially bounded when $\tau(p) > (1{+}\eta)m$.

\begin{corollary} \label{thm:ls-main2}
  Consider the setup from \Cref{thm:ls-main}.
  Assume that
  \begin{gather*}
  \tau(p) \geq (1{+}\eta)m + \eta\, t,
  \quad
  \delta \leq (1/3\kappa)^{1/\eta}(\rho\sigma^2/2 n^3 m)^{1{+}1/\eta},
  \quad
  \varepsilon_0 \geq \rho\, \sigma,
  \end{gather*}
  for some arbitrary constants $\eta, t, \rho >0$.
  Then
  \begin{align*}
    \Pr[ \mathcal{A} \in \mathscr{A}_{\boldsymbol\varepsilon} ]
    \;\leq\;
    4 e \,  (\rho\sigma^2/6 \kappa n^3 m)^{t}.
  \end{align*}
\end{corollary}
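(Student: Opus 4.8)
The plan is to obtain \Cref{thm:ls-main2} by purely algebraic manipulation of the probability bound in \Cref{thm:ls-main}; there is no conceptual content, and the only thing that needs care is the bookkeeping of which auxiliary quantities are $\le 1$ (so that replacing an exponent by a larger one is harmless) together with the dismissal of a vacuous corner case. First I would introduce abbreviations $v := 3\kappa$ and $q := \rho\sigma^2/(e^2 n^6)$, so that the hypothesis on $\delta$ reads $\delta \le v^{-1/\eta} q^{1+1/\eta}$ and the target inequality is $\Pr[\mathcal{A}\in\mathscr{A}_{\boldsymbol\varepsilon}] \le 8e\,(q/v)^t$. If $q/v > 1$ the right-hand side already exceeds $8e > 1 \ge \Pr[\cdot]$, so I may assume $q/v \le 1$ from now on; note that then $\delta \le v^{-1/\eta} q^{1+1/\eta} = (q/v)^{1/\eta}\, q \le q$.

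Next I would put myself into the hypotheses of \Cref{thm:ls-main}. Using $\varepsilon_0 \ge \rho\sigma$, I replace $e n^3/(\sigma\varepsilon_0)$ by the larger quantity $w := e n^3/(\rho\sigma^2)$; the identity that makes everything work is $q w = 1/(e n^3) \le 1$. I also need to verify the proviso $\delta < \sigma\varepsilon_0/(4 e n^3)$ of \Cref{thm:ls-main}: from $\delta \le q = \rho\sigma^2/(e^2 n^6)$ and $\sigma\varepsilon_0 \ge \rho\sigma^2$ this reduces to $\rho\sigma^2/(e^2 n^6) < \rho\sigma^2/(4 e n^3)$, i.e.\ $4 < e n^3$, which holds for $n \ge 2$ (the $n = 1$ case being degenerate).

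Then \Cref{thm:ls-main} gives $\Pr[\mathcal{A}\in\mathscr{A}_{\boldsymbol\varepsilon}] \le 8e\,\delta^{\tau(p)-m} v^m w^{\tau(p)}$, which I would regroup as
\[
  \Pr[\mathcal{A}\in\mathscr{A}_{\boldsymbol\varepsilon}] \;\le\; 8e\,(\delta w)^{\tau(p)-m}\,(v w)^m .
\]
From the hypothesis on $\delta$ and $q w \le 1$ we get $\delta w \le v^{-1/\eta} q^{1/\eta}(q w) \le (q/v)^{1/\eta}$; combining with $\tau(p) - m \ge \eta(m+t)$ and $q/v \le 1$ gives $(\delta w)^{\tau(p)-m} \le (q/v)^{(\tau(p)-m)/\eta} \le (q/v)^{m+t}$. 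Finally $(q/v)^m (v w)^m = (q w)^m \le 1$, so
\[
  \Pr[\mathcal{A}\in\mathscr{A}_{\boldsymbol\varepsilon}] \;\le\; 8e\,(q/v)^{m+t}(v w)^m \;=\; 8e\,(q/v)^t (q w)^m \;\le\; 8e\,(q/v)^t \;=\; 8e\,(\rho\sigma^2/(3\kappa e^2 n^6))^t ,
\]
which is the assertion. The main (and essentially only) pitfall to watch is that each of $v^{-1/\eta}$, $q/v$, $q w$ must be $\le 1$ exactly at the point where it is used in the chain above, and that whenever one of them fails to be so the target bound is already trivial; modulo this the whole argument is a straightforward manipulation, as claimed.
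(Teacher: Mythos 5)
Your proposal is correct and is exactly the ``straightforward manipulation'' the paper has in mind (the paper omits the proof of this corollary entirely, giving only a one-line remark for the analogous \Cref{thm:main2}): you substitute $\varepsilon_0\ge\rho\sigma$ into the bound of \Cref{thm:ls-main}, regroup, and use $qw=1/(en^3)\le 1$ together with $\tau(p)-m\ge\eta(m+t)$. The bookkeeping of the cases $q/v>1$ and $n=1$ is handled sensibly, so nothing is missing.
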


\subsection{Tubes around varieties}
As in \Cref{s:optimality}, our proof of \Cref{thm:main} relies on a geometric characterization of the spurious AC points.
First consider the simpler case of spurious \emph{exactly} critical points ($\boldsymbol\varepsilon=0$).
The following equation is a consequence of our analysis:
\begin{align*}
  \exists \text{ (spurious exactly critical point) }
  \;\;\implies\;\;
  \SS^n_{n-p} \cap \image \!\mathcal{A}^* \text{ is nontrivial}.
\end{align*}
This implies that the set $\mathscr{A}_{0}$
has measure zero when $\tau(p) \!>\! m$.
Hence, for generic~$\mathcal{A}$, there are no spurious exactly critical points.

The case $\boldsymbol\varepsilon > 0$ is similar,
but we need consider a tube around~$\SS^n_{n-p}$.

\begin{proposition}\label{thm:ls-sufficient}
  Let
  $\delta := {\varepsilon_1 R_A}/{\sqrt{\varepsilon_2}}$,
  $D_\lambda := \{\lambda\!\in\!\RR^m: 2\varepsilon_0 \!\leq\! \|\lambda\| \!\leq\! R_\lambda \}$,
  $R_\lambda := 2(R_A R_Y^2 \!+\! \|b\|)$.
  Then for $\mathcal{A} \in \mathscr{A}_{\boldsymbol\varepsilon}$ we have that
  \begin{gather*}
    \tube_\delta(\SS^n_{n-p}) \,\cap\, \mathcal{A}^*(D_\lambda)
    \,\neq\, \emptyset.
  \end{gather*}
\end{proposition}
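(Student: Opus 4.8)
The plan is to mimic the structure of the proof of \Cref{thm:sufficient}, adapting it to the unconstrained least-squares setting. I would start by unpacking what it means for $\mathcal{A} \in \mathscr{A}_{\boldsymbol\varepsilon}$: there is a spurious $\boldsymbol\varepsilon$-AC point $Y$, meaning $Y$ is $(\varepsilon_1,\varepsilon_2)$-AC for~\eqref{eq:ls-bm} with $\|Y\|\leq R_Y$, but $YY^T$ is not $\boldsymbol\varepsilon'$-approximately optimal for~\eqref{eq:ls} with $\boldsymbol\varepsilon' = (\varepsilon_0, R_Y\varepsilon_1, 5\varepsilon_2)$. Write $S := S(YY^T) = 2\mathcal{A}^*(\mathcal{A}(YY^T)-b)$ and $\lambda := 2(\mathcal{A}(YY^T)-b)$, so that $S = \mathcal{A}^*(\lambda)$; this is the analogue of the slack/multiplier decomposition, and it automatically gives membership in $\image\mathcal{A}^*$. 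The first task is to show $\|\lambda\|$ obeys the claimed bounds: the upper bound $\|\lambda\| \leq R_\lambda = 2(R_A R_Y^2 + \|b\|)$ follows from $\|\mathcal{A}(YY^T)\| \leq \|\mathcal{A}\|\,\|YY^T\| \leq R_A R_Y^2$ and the triangle inequality, and the lower bound $\|\lambda\| \geq 2\varepsilon_0$ must come from the failure of the first disjunct in~\eqref{eq:ls-approxoptimal}, namely $\|\mathcal{A}(YY^T)-b\| > \varepsilon_0$.

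The heart of the argument is the analogue of \Cref{thm:firstpart}: I would show that if $\sigma_p(Y)$ is large, then $YY^T$ satisfies the second disjunct of~\eqref{eq:ls-approxoptimal}, i.e.\ $\|S Y Y^T\| $ is small and $S \succeq -5\varepsilon_2 \id_n$ — contradicting spuriousness, hence forcing $\sigma_p(Y)$ to be small. For the spectral bound, given a unit vector $u \in \RR^n$ pick a unit $z\in\RR^p$ with $\|Yz\| = \sigma_p(Y)$ and set $U := uz^T$, so $\|U\|=1$ and $\|\mathcal{A}(UY^T)\| \leq \|\mathcal{A}\|\,\sigma_p(Y) \leq R_A\,\sigma_p(Y)$. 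Plugging into~\eqref{eq:ls-conditionsecond} gives $u^T S u + 4 R_A^2 \sigma_p(Y)^2 \geq S\bullet UU^T + 4\|\mathcal{A}(UY^T)\|^2 \geq -\varepsilon_2$; so if $\sigma_p(Y)^2 \leq \varepsilon_2/(R_A^2)$ we get $u^T S u \geq -5\varepsilon_2$ (the constant $5$ in $\boldsymbol\varepsilon'$ is exactly what makes this work: $1 + 4 = 5$). For the complementary-slackness bound, from $\|S Y\| = \|S(YY^T)Y\| \leq \varepsilon_1$ I get $\|S Y Y^T\| \leq \varepsilon_1 \|Y\| \leq R_Y\varepsilon_1$, which is the $\boldsymbol\varepsilon'$ tolerance. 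Thus a spurious point must have $\sigma_p(Y)^2 > \varepsilon_2/R_A^2$, i.e.\ $\sigma_p(Y) > \sqrt{\varepsilon_2}/R_A$.

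Finally I close the loop exactly as in~\eqref{eq:distS}: using $\|S Y\| \leq \varepsilon_1$ together with $\sigma_p(Y) > \sqrt{\varepsilon_2}/R_A$,
\begin{align*}
  \dist(S,\SS^n_{n-p}) = \sqrt{\textstyle\sum_{i=1}^p \sigma^2_{n-i+1}(S)} \leq \|S Y\|/\sigma_p(Y) < \varepsilon_1 R_A/\sqrt{\varepsilon_2} = \delta.
\end{align*}
Combined with $S = \mathcal{A}^*(\lambda)$ and $\lambda \in D_\lambda$ (from the two-sided norm bound established above), this shows $\mathcal{A}^*(\lambda) \in \tube_\delta(\SS^n_{n-p}) \cap \mathcal{A}^*(D_\lambda)$, so the intersection is nonempty. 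The main obstacle I anticipate is the bookkeeping around the two cases in the definition of approximately optimal for~\eqref{eq:ls} — one must be careful that it is precisely the failure of $\|\mathcal{A}(YY^T)-b\|\leq\varepsilon_0$ (and not some other clause) that yields the lower bound $\|\lambda\| \geq 2\varepsilon_0$, and that the tolerances $(\,R_Y\varepsilon_1,\,5\varepsilon_2)$ in $\boldsymbol\varepsilon'$ line up exactly with the $\|Y\|\leq R_Y$ scaling and the $1+4=5$ absorption of the quadratic penalty term, respectively; everything else is a direct transcription of the proofs of \Cref{thm:firstpart} and \Cref{thm:sufficient}.
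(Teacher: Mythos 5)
Your proposal is correct and follows essentially the same approach as the paper's proof: it invokes the analogue of the singular-value lemma (stated in the paper as Lemma~\ref{thm:ls-firstpart}) to force $\sigma_p(Y) > \sqrt{\varepsilon_2}/R_A$, combines this with $\|S(YY^T)Y\|\leq\varepsilon_1$ and~\eqref{eq:distS} to place $S(YY^T)=\mathcal{A}^*(\lambda)$ in $\tube_\delta(\SS^n_{n-p})$, and then verifies $\lambda\in D_\lambda$ by bounding $\|\lambda\|$ from above via $\|\mathcal{A}\|\leq R_A$, $\|Y\|\leq R_Y$ and from below via the failure of the first disjunct in~\eqref{eq:ls-approxoptimal}. (Incidentally, the paper's proof cites Lemma~\ref{thm:firstpart} where it should cite Lemma~\ref{thm:ls-firstpart}; your account correctly uses the least-squares version.)
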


The above proposition relies on an analogue of \Cref{thm:firstpart}.

\begin{lemma} \label{thm:ls-firstpart}
  Let $Y$ be an $(\varepsilon_1,\varepsilon_2)$-AC point of~\eqref{eq:ls-bm}.
  If $\sigma_p(Y) \leq \sqrt{\varepsilon_2}/R_A$,
  then $Y Y^{T}$ is $\boldsymbol\varepsilon'$-approximately optimal for~\eqref{eq:ls},
  with $\boldsymbol\varepsilon' := (0,R_Y \varepsilon_1, 5\varepsilon_2)$.
\end{lemma}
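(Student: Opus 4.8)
The plan is to mirror the argument of \Cref{thm:firstpart}, exploiting the hypothesis $\sigma_p(Y) \leq \sqrt{\varepsilon_2}/R_A$ to promote the 2nd-order condition~\eqref{eq:ls-conditionsecond} from a statement about all unit $U$ to a pointwise PSD-type bound on the slack matrix $S := S(YY^T)$. Set $X := YY^T$. The first-order piece $\|S(X)X\| = \|S(YY^T)Y \cdot Y^T\| \le \|S(YY^T)Y\|\,\|Y\| \le R_Y\varepsilon_1$ is immediate from the AC condition and $\|Y\|\le R_Y$, which already gives the ``$\varepsilon_1$'' slot of $\boldsymbol\varepsilon'$ (with the $R_Y$ factor). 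So the whole content is to establish $S(X) \succeq -5\varepsilon_2\, \id_n$.

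For the 2nd-order part, fix a unit vector $u\in\RR^n$; I want $u^T S(X) u \ge -5\varepsilon_2$. As in the proof of \Cref{thm:firstpart}, pick a unit $z\in\RR^p$ with $\|Yz\| = \sigma_p(Y)$ and set $U := uz^T$, so $\|U\| = 1$. Then $\|UY^T\| = \|u\|\,\|Yz\| = \sigma_p(Y) \le \sqrt{\varepsilon_2}/R_A$. Since $\mathcal{A}$ is the perturbed map and $R_A = \|\bar{\mathcal{A}}\| + \sigma \ge \|\mathcal{A}\|$, this gives $\|\mathcal{A}(UY^T)\| \le \|\mathcal{A}\|\,\|UY^T\| \le \sqrt{\varepsilon_2}$, hence $4\|\mathcal{A}(UY^T)\|^2 \le 4\varepsilon_2$. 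Now plug $U$ into~\eqref{eq:ls-conditionsecond}:
\[
  -\varepsilon_2 \;\le\; S(X)\bullet UU^T + 4\|\mathcal{A}(UY^T)\|^2 \;=\; \|z\|^2 (u^T S(X) u) + 4\|\mathcal{A}(UY^T)\|^2 \;\le\; u^T S(X)u + 4\varepsilon_2,
\]
using $UU^T = \|z\|^2 uu^T = uu^T$. Rearranging yields $u^T S(X)u \ge -5\varepsilon_2$, i.e. $S(X) \succeq -5\varepsilon_2\,\id_n$.

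It remains only to record that $X = YY^T \succeq 0$, which holds trivially, so $X$ satisfies the second alternative in~\eqref{eq:ls-approxoptimal} with parameters $(0, R_Y\varepsilon_1, 5\varepsilon_2)$; the ``$\varepsilon_0 = 0$'' slot is vacuous since we verified the second disjunct directly. The only point requiring a little care — and the closest thing to an obstacle — is matching the operator norm used in the AC condition with the quantity $R_A$ appearing in the hypothesis: one must note that in the smoothed setting $\mathcal{A}$ lies in $\ball{\bar{\mathcal{A}}}{\sigma}$, so $\|\mathcal{A}\| \le \|\bar{\mathcal{A}}\| + \sigma = R_A$, and also be consistent about whether $\|\mathcal{A}\|$ denotes the operator norm with respect to the Frobenius inner product (it does, per the convention fixed earlier in \Cref{s:optimality}). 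Everything else is a direct transcription of the proof of \Cref{thm:firstpart}, with the extra $4\|\mathcal{A}(UY^T)\|^2$ term in~\eqref{eq:ls-conditionsecond} absorbed into the slack, accounting for the jump from $\varepsilon_2$ to $5\varepsilon_2$.
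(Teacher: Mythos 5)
Your proof is correct and follows exactly the same approach as the paper's: construct $U = uz^T$ with $z$ the unit singular vector achieving $\sigma_p(Y)$, use the hypothesis on $\sigma_p(Y)$ to bound $\|\mathcal{A}(UY^T)\| \le \sqrt{\varepsilon_2}$, and absorb the $4\|\mathcal{A}(UY^T)\|^2$ term into $5\varepsilon_2$. You are in fact slightly more explicit than the paper (spelling out $\|\mathcal{A}\| \le R_A$ and the first-order bound $\|S(X)X\| \le R_Y\varepsilon_1$), but the argument is identical.
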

\begin{proof}
  Let $Y$ satisfy~\eqref{eq:ls-approxcritical}, and let us show that $Y Y^T$ satisfies~\eqref{eq:ls-approxoptimal}.
  The first-order condition is easy to check.
  We proceed to show that
  $u^T S(X) u \!\geq\! -\varepsilon_2'$
  for any unit vector $u \!\in\! \RR^n$.
  Let $z\!\in\! \RR^p$ be a unit vector such that $\|Y z\| = \sigma_p(Y) $.
  The matrix $U \!:=\! u z^T$ satisfies $\|U\|\!=\!1$ and $\|U Y^T\| \leq \|u\| \|Yz\| = \sigma_p(Y)$.
  Then $\|\mathcal{A}(U Y^T)\| \leq \sqrt{\varepsilon_2}$ and by \eqref{eq:ls-conditionsecond} we have
  \[
    u^T S(X) u
    % \,=\, \|z\|^2 (u^T S(X) u)
    = S(Y Y^T)\bullet U U^{T\!}
    \geq -\varepsilon_2 \!-\! 4 \|\mathcal{A}(U Y^{T\!})\|^2
    \geq -5\varepsilon_2.
    \qedhere
  \]
\end{proof}

\begin{proof}[Proof of \Cref{thm:ls-sufficient}]
  As $\mathcal{A} \in \mathscr{A}_{\boldsymbol\varepsilon}$,
  there is a spurious $\boldsymbol\varepsilon$-AC point~$Y$.
  By \Cref{thm:firstpart}, we must have $\sigma_p(Y)\!>\!\sqrt{\varepsilon_2}/R_A$.
  Note that $\|S(Y Y^T) Y\| \!\leq\! \varepsilon_1 $.
  Together with~\eqref{eq:distS},
  we conclude that $S(Y Y^T) \in \tube_\delta(\SS^n_{n-p})$.
  Let $\lambda:= 2(\mathcal{A}(Y Y^T)\!-\!b)$.
  Note that $\|\lambda\|>2\varepsilon_0$ and
  \begin{align*}
    \|\lambda\|
    = 2\, \| \mathcal{A}(Y Y^T)-b \|
    \leq 2(\|\mathcal{A}\| \|Y\|^2 + \|b\|)
    \leq R_\lambda.
  \end{align*}
  Then $\lambda \in D_\lambda$ and
  $\mathcal{A}^*(\lambda) = S(Y Y^T) \in \tube_\delta(\SS^n_{n-p})$.
\end{proof}

\begin{proof}[Proof of \Cref{thm:ls-main}]
  The result in \Cref{thm:ls-sufficient} can be expressed as:
  \begin{align*}
    \mathscr{A}_{\boldsymbol\varepsilon} \,\subset\,
    \{ \mathcal{A}\in (\SS^n)^m:
      0 \in \tube_\delta(\SS^n_{n-p}) \!+\!  \mathcal{A}^*(D_\lambda)
    \},
  \end{align*}
  which is closer to the formula in \Cref{thm:sufficient}.
  Consider an $\epsilon$-net $\mathcal{N}$ of~$D_\lambda$,
  where $\epsilon := \delta/ R_A$.
  It suffices to take
  $(3 R_\lambda/\epsilon)^m = (3 \kappa/\delta)^m$
  points for the $\epsilon$-net.
  A reasoning similar to~\eqref{eq:ballunion} gives
  \begin{align*}
    \mathscr{A}_{\boldsymbol\varepsilon}
    &\,\subset\,
    \{ \mathcal{A} \in (\SS^n)^m :
      0 \in \tube_{2\delta}(\SS^n_{n-p}) + \mathcal{A}^*(\mathcal{N}))
    \}
    \\
    &\,=\,
    \bigcup_{\ell \in \mathcal{N}}\,
    \{ \mathcal{A} \in (\SS^n)^m :
      \mathcal{A}^*(\ell)\in \tube_{2\delta}(\SS^n_{n-p}) )
    \}.
  \end{align*}
  Let $\ell \in \mathcal{N}$,
  and consider the linear map
  \begin{align*}
    \phi_\ell:(\SS^n)^m \to \SS^n, \quad
    \mathcal{A} \mapsto \mathcal{A}^*(\ell).
  \end{align*}
  This is a surjective map.
  Moreover, the scaled map $\tfrac{1}{\|\ell\|}\,\phi_\ell\,$ gives an isometry ${(\ker \phi_\ell)^\perp} \cong \SS^n$.
  It follows that
  \begin{align*}
    \phi_\ell(\mathcal{A}) \,\in\, \tube_{2\delta}(\SS^n_{n-p})
    \quad\iff\quad
    \mathcal{A} \,\in\, \tube_{2\delta/\|\ell\|}\left(\phi_\ell^{-1}(\SS^n_{n-p})\right).
  \end{align*}
  Since $\|\ell\| \geq 2\varepsilon_0$, we conclude that
  \begin{align*}
    \mathscr{A}_{\boldsymbol\varepsilon} \,\subset\, \bigcup_{\ell \in \mathcal{N}} \tube_{\delta/\varepsilon_0}(V_\ell),
    \quad\text{ with }\;
    V_\ell := \phi_\ell^{-1}(\SS^n_{n-p}).
  \end{align*}

  The final part of the proof is similar to the one in \Cref{thm:main}.
  The variety~$V_\ell$ is a cylinder over~$\SS^n_{n-p}$,
  so it has the same codimension~$\tau(p)$ and degree $n{-}p{+}1$ as~$\SS^n_{n-p}$,
  The ambient space is $(\SS^n)^m$, of dimension $\tau(n) m$.
  Using the union bound and \Cref{thm:tube}, we get
  \begin{align*}
    \Pr[\mathcal{A}\in \mathscr{A}_{\boldsymbol\varepsilon}]
    &\,<\,
    \# \mathcal{N} \cdot \Pr[\mathcal{A}\in \tube_{\delta/\varepsilon_0}(V_\ell)]
    \\
    &\,<\,
    \left({3\kappa}/{\delta}\right)^{m} \cdot
    4 e \left( {2 n^3 m \delta}/{\sigma \varepsilon_0}\right)^{\tau(p)}.
    \qedhere
  \end{align*}
\end{proof}

\section{Overall complexity estimates}\label{s:complete}

In this section we will derive polynomial time guarantees for the Burer-Monteiro method.
In particular, we will formalize and prove \Cref{thm:informalQ2a,thm:informalQ2b} from the introduction.

We introduce some notation that will be used throughout this section.
We consider two constants $\alpha \geq \beta > 0$ and associated sets $\mathfrak{M}_\alpha \supset \mathfrak{M}_\beta$,
where
$\mathfrak{M}_t := \{ Y: \|\mathcal{A}(Y Y^T){-}b\| \leq t\}$.
We assume that $\beta$ is small enough so that $\rho$-LICQ holds globally on $\mathfrak{M}_\beta$.
On the other hand, $\alpha > 0$ is sufficiently large so that a point $Y_0 \in \mathfrak{M}_\alpha$ is always known.
We further assume that $\mathfrak{M}_\alpha$ is compact.
This compactness assumption is satisfied, for instance,
if one of the constraints matrices $A_i$ is positive definite.

\subsection{Optimality}

We assume first that an approximately feasible solution~$Y_0$ is known.
Consider the following setting:
\begin{itemize}
  \item
    $p$ satisfies $\tau(p) \geq (1{+}\eta)m \!+\! \eta\, t$
    for some given constants $\eta,t \!\in\! \RR_+$.
  \item
    $\mathcal{A},b$ are fixed and $C$ is uniformly distributed on a ball $\ball{\bar C}{\sigma}$.
  \item $\exists\beta\in\RR_+$ such that:
    $\mathfrak{M}_\beta$ is compact,
    a point $Y_0\in \mathfrak{M}_\beta$ is known,
    and $\varrho$-LICQ holds on $\mathfrak{M}_\beta$.
  \item
    $R_Y, L_f \in \RR_+$ are constants that bound $\|Y\|$ and $\|C Y\|$,
    for $Y \!\in \mathfrak{M}_\beta$.
  \item
    \eqref{eq:bm} is solved with the method from \Cref{thm:complexity}
    initialized at~$Y_0$.
\end{itemize}

The next theorem shows that the Burer-Monteiro method solves~\eqref{eq:sdp} in polynomial time with high probability.

\begin{theorem}[Polytime optimality] \label{thm:complete}
  Let $\rho \!\in\! (0,1]$ arbitrary, and let
  \begin{gather*}
    \varepsilon_0 := \gamma := \epsilon,
    \quad
    \varepsilon_1 := \epsilon^2,
    \quad
    \varepsilon_2 := 16\, R_\lambda^3\, \epsilon,
    \;\;\text{ with }\;\;
    \epsilon :=
    K^{-1}\, \rho\, (\sigma/4 n^3)^{1+1/\eta},
  \end{gather*}
  where $R_\lambda$ and $K$ are the problem dependent constants
  \begin{gather*}
    R_\lambda := 2 + 2 \varrho^{-1} L_f,
    \quad
    K := \|\mathcal{A}\|\, (3 \kappa)^{1/\eta},
    \quad\text{ with }\quad
    \kappa := R_\lambda \|\mathcal{A}\|.
  \end{gather*}
  The algorithm from \Cref{thm:complexity} returns a pair $(Y,\lambda)$ after $O(\epsilon^{-6})$ function evaluations.
  With probability at least
  $1 \!-\! O(\sigma/n^3)^{t} $,
  the pair $(Y Y^T,\lambda)$ is
  $(\epsilon,\epsilon^2 R_Y,16 R_\lambda^3 \epsilon)$-approximately optimal for~\eqref{eq:sdp}.
\end{theorem}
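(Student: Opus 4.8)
The plan is to combine \Cref{thm:complexity} (polynomial-time computation of AFAC points) with \Cref{thm:main2} (spurious AFAC points are avoided with high probability) by checking that the chosen parameters $\boldsymbol\varepsilon,\gamma$ satisfy the hypotheses of both. First I would instantiate the nonlinear program \eqref{eq:nonlinear} with $f(Y)=C\bullet YY^T$ and $h(Y)=\mathcal{A}(YY^T)-b$, observing that on the compact set $\mathfrak{M}_\beta$ the functions $f,\nabla f,\nabla^2 f,\nabla h_i,\nabla^2 h_i$ are uniformly bounded and Lipschitz (by compactness and the polynomial nature of $f,h$), that $\varrho$-LICQ holds on $\mathfrak{M}_\beta$ by assumption, and that $Y_0\in\mathfrak{M}_\beta$; hence \Cref{thm:complexity} applies once we verify the inequalities \eqref{eq:epsilonineqs}.

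Next I would check \eqref{eq:epsilonineqs} for the stated choices $\varepsilon_0=\gamma=\epsilon$, $\varepsilon_1=\epsilon^2$, $\varepsilon_2=16R_\lambda^3\epsilon$. We need $\epsilon\le\beta$ (true for $\epsilon$ small enough, i.e.\ for $\sigma$ small — this is absorbed into the hypotheses), $\epsilon^2\le 1$, $\varepsilon_1^2=\epsilon^4\le\frac{1}{16}R_\lambda^{-1}\varepsilon_0\varepsilon_2=\frac{1}{16}R_\lambda^{-1}\cdot\epsilon\cdot 16R_\lambda^3\epsilon=R_\lambda^2\epsilon^2$, which reduces to $\epsilon^2\le R_\lambda^2$, true since $R_\lambda\ge 2$ and $\epsilon\le 1$; and $\gamma^2=\epsilon^2\le\frac{1}{16}R_\lambda^{-3}\varepsilon_0\varepsilon_2=\frac{1}{16}R_\lambda^{-3}\cdot\epsilon\cdot 16R_\lambda^3\epsilon=\epsilon^2$, which holds with equality. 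So \Cref{thm:complexity} produces an $(\boldsymbol\varepsilon,\gamma)$-AFAC pair $(Y,\lambda)$ with $\|\lambda\|\le R_\lambda$ after $O(\max\{\varepsilon_0^{-2}\varepsilon_1^{-2},\varepsilon_0^{-3}\varepsilon_2^{-3}\})=O(\max\{\epsilon^{-6},\epsilon^{-6}\})=O(\epsilon^{-6})$ evaluations, using $\varepsilon_2=\Theta(\epsilon)$. Also $\|Y\|\le R_Y$ follows since $Y\in\mathfrak{M}_\beta$ (note AFAC points produced stay feasible up to $\varepsilon_0\le\beta$, so within $\mathfrak{M}_\beta$).

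Then I would invoke \Cref{thm:main2} to rule out spuriousness. The key is the quantity $\delta=\varepsilon_1\|\mathcal{A}\|/\gamma=\epsilon^2\|\mathcal{A}\|/\epsilon=\epsilon\|\mathcal{A}\|$. I must verify $\delta\le(1/3\kappa)^{1/\eta}(\sigma/2en^3)^{1+1/\eta}$. Substituting $\epsilon=K^{-1}\rho(\sigma/2en^3)^{1+1/\eta}$ with $K=\|\mathcal{A}\|(3\kappa)^{1/\eta}$ gives $\delta=\|\mathcal{A}\|\cdot\|\mathcal{A}\|^{-1}(3\kappa)^{-1/\eta}\rho(\sigma/2en^3)^{1+1/\eta}=(3\kappa)^{-1/\eta}\rho(\sigma/2en^3)^{1+1/\eta}$, which is $\le(1/3\kappa)^{1/\eta}(\sigma/2en^3)^{1+1/\eta}$ since $\rho\le 1$; this also gives $\delta<\sigma/4en^3$ for small $\sigma$. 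Hence by \Cref{thm:main2} with the given $t$, $\Pr[C\in\mathscr{C}_{\boldsymbol\varepsilon,\gamma}]\le 8e(\sigma/6\kappa en^3)^t=O(\sigma/n^3)^t$. On the complementary event, $(Y,\lambda)$ being AFAC with $\|Y\|\le R_Y$, $\|\lambda\|\le R_\lambda$ forces — by definition of $\mathscr{C}_{\boldsymbol\varepsilon,\gamma}$ — that $(YY^T,\lambda)$ is $\boldsymbol\varepsilon'$-approximately optimal with $\boldsymbol\varepsilon'=(\varepsilon_0,R_Y\varepsilon_1,\varepsilon_2)=(\epsilon,\epsilon^2R_Y,16R_\lambda^3\epsilon)$, which is exactly the claimed conclusion.

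The main obstacle I anticipate is bookkeeping rather than conceptual: making sure every ``small enough $\sigma$'' side condition ($\epsilon\le\beta$, $\epsilon\le 1$, $\delta<\sigma/4en^3$, and the $\max\{\cdots\}\le 1$ hypothesis inside \Cref{thm:tube}) is consistently implied by the stated assumptions, and that the constants $R_Y,L_f,R_\lambda,\kappa,K$ are genuinely problem-dependent constants independent of $\epsilon$ so that the $O(\epsilon^{-6})$ iteration count is legitimately polynomial in $n$ and $\sigma^{-1}$. I would close by noting $\epsilon^{-1}=K\rho^{-1}(2en^3/\sigma)^{1+1/\eta}=\poly(n,\sigma^{-1})$ for fixed $\eta,\rho$, so $O(\epsilon^{-6})$ is polynomial, completing the proof.
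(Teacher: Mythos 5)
Your proposal is correct and follows essentially the same route as the paper's proof: apply \Cref{thm:complexity} on $\mathfrak{M}_\beta$ to obtain an $(\boldsymbol\varepsilon,\gamma)$-AFAC pair with $\|\lambda\|\leq R_\lambda$, compute $\delta=\varepsilon_1\|\mathcal{A}\|/\gamma=\epsilon\|\mathcal{A}\|$, and check that it meets the hypothesis of \Cref{thm:main2}. You are in fact more explicit than the paper on the bookkeeping (verifying \eqref{eq:epsilonineqs}, the $O(\epsilon^{-6})$ count, and the residual ``$\sigma$ small enough'' side conditions such as $\epsilon\leq\beta$ and $\delta<\sigma/4en^3$), all of which checks out.
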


\begin{proof}
  The smoothness assumptions in \Cref{thm:complexity} are satisfied
  since $\mathfrak{M}_\beta$ is compact.
  Then $(Y,\lambda)$ is an $(\boldsymbol\varepsilon,\gamma)$-AFAC pair with $\|\lambda\| \!\leq\! R_\lambda$.
  Note that
  \begin{align*}
    \delta
    \,:=\,
    \varepsilon_1 \|\mathcal{A}\| / \gamma
    \,=\,
    \epsilon\, \|\mathcal{A}\|\,
    \,\leq\,
    (1/3 \kappa)^{1/\eta}\,(\sigma/2 e n^3)^{1+1/\eta}
  \end{align*}
  is as in \Cref{thm:main2}.
  Hence $(Y Y^T, \lambda)$ is $(\varepsilon_0,\varepsilon_1 R_Y,\varepsilon_2)$-approximately optimal for~\eqref{eq:sdp}
  with probability $1 \!-\! O(\sigma/n^3)^{t}.$
\end{proof}

The above theorem shows that $Y Y^T$ obtained is approximately optimal for the perturbed problem~\eqref{eq:sdp} with high probability.
Let \eqsdp denote the SDP problem in which we use the unperturbed cost matrix~$\bar{C}$.
We can also show that $Y Y^T$ is also approximately optimal for~\eqsdp.

\begin{corollary} \label{thm:complete2}
  Consider the setup of \Cref{thm:complete}.
  With probability at least
  $1 \!-\! O(\sigma/n^3)^{t} $,
  the pair $(Y Y^T,\lambda)$ is
  $(\varepsilon''_0,\varepsilon''_1,\varepsilon''_2)$-approximately optimal for~\eqsdp,
  where $\varepsilon''_0,\varepsilon''_1,\varepsilon''_2 = O(\sigma)$.
\end{corollary}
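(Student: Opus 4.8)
The plan is to deduce Corollary~\ref{thm:complete2} directly from Theorem~\ref{thm:complete} by a perturbation argument on the cost matrix: the pair $(YY^T,\lambda)$ is already known to be $(\varepsilon_0,\varepsilon_1 R_Y,\varepsilon_2)$-approximately optimal for the \emph{perturbed} SDP with cost~$C$, and we must transfer this to the unperturbed problem \eqsdp with cost $\bar C$. The only one of the four defining conditions in~\eqref{eq:approxoptimal} that involves the cost matrix is the complementary slackness bound $\|S(\lambda)X\|\le\varepsilon_1$ and the dual feasibility bound $S(\lambda)\succeq-\varepsilon_2\id_n$, since $S(\lambda)=C-\mathcal{A}^*(\lambda)$ depends on~$C$. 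Writing $\bar S(\lambda):=\bar C-\mathcal{A}^*(\lambda)$, we have $\bar S(\lambda)=S(\lambda)+(\bar C-C)$ with $\|\bar C-C\|\le\sigma$, since $C$ lies in the Frobenius ball $\ball{\bar C}{\sigma}$.

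Concretely I would carry out the following steps. First, observe primal feasibility $\|\mathcal{A}(YY^T)-b\|\le\varepsilon_0$ and $YY^T\succeq0$ are unchanged, so those contribute $\varepsilon_0''=\varepsilon_0=\epsilon=O(\sigma)$ (here $\epsilon=O(\sigma)$ because $\epsilon = K^{-1}\rho(\sigma/2en^3)^{1+1/\eta}$ and $1+1/\eta>1$, so for small $\sigma$ this is $O(\sigma)$, and in any case $\epsilon\le O(\sigma^{1+1/\eta})=O(\sigma)$). Second, for the dual slack bound, from $S(\lambda)\succeq-\varepsilon_2\id_n$ and $\|\bar C-C\|\le\sigma$ we get $\bar S(\lambda)\succeq-(\varepsilon_2+\sigma)\id_n$, so $\varepsilon_2''=\varepsilon_2+\sigma=16R_\lambda^3\epsilon+\sigma=O(\sigma)$. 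Third, for complementary slackness, bound
\[
  \|\bar S(\lambda)\,YY^T\|
  \,\le\, \|S(\lambda)\,YY^T\| + \|(\bar C - C)\,YY^T\|
  \,\le\, \varepsilon_1 R_Y + \sigma \|Y\|^2
  \,\le\, \varepsilon_1 R_Y + \sigma R_Y^2,
\]
using $\|Y\|\le R_Y$ on $\mathfrak{M}_\beta$; hence $\varepsilon_1''=\varepsilon_1 R_Y+\sigma R_Y^2=\epsilon^2 R_Y+\sigma R_Y^2=O(\sigma)$. Fourth, assemble: $(YY^T,\lambda)$ is $(\varepsilon_0'',\varepsilon_1'',\varepsilon_2'')$-approximately optimal for \eqsdp with all three parameters $O(\sigma)$, and this holds on exactly the same event of probability $1-O(\sigma/n^3)^t$ from Theorem~\ref{thm:complete} since no new randomness is introduced.

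The main subtlety—not really an obstacle—is bookkeeping the dependence of the hidden constants on the problem-dependent quantities $R_Y$, $R_\lambda$, $\varrho$, $\|\mathcal{A}\|$, $K$: one should be careful that ``$O(\sigma)$'' is understood with these held fixed (as constants of the problem instance), and that $\epsilon=O(\sigma)$ genuinely holds, which requires $\sigma$ small enough (or simply noting $(\sigma/2en^3)^{1+1/\eta}\le\sigma$ for $\sigma\le 2en^3$, always true in the regime of interest). A second point worth a sentence is that the perturbation $\bar C - C$ has Frobenius norm at most $\sigma$ deterministically because $C\in\ball{\bar C}{\sigma}$ with probability one, so the $O(\sigma)$ bounds are not merely in expectation but hold surely on the good event. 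Everything else is the triangle inequality together with the operator-norm bound $\|(\bar C-C)M\|\le\|\bar C-C\|\,\|M\|$ in the Frobenius norm, so the proof is short.
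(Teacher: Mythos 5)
Your proof is correct and takes essentially the same approach as the paper: decompose $\bar S(\lambda)=S(\lambda)+(\bar C - C)$, observe $\|\bar C - C\|\le\sigma$ deterministically, and transfer each of the four conditions in~\eqref{eq:approxoptimal} via the triangle inequality. Your explicit check that $\epsilon=O(\sigma)$ and your tracking of the constants $R_Y$, $R_\lambda$ are slightly more careful than the paper's terse presentation, but the argument is the same.
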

\begin{proof}
  Let $X := Y Y^T$.
  We know that $(X, \lambda)$ is $(\varepsilon'_0,\varepsilon'_1,\varepsilon'_2)$-approximately optimal for~\eqref{eq:sdp} with high probability.
  Let $S := C \!-\! \mathcal{A}^*(\lambda)$, $\bar S := \bar C \!-\! \mathcal{A}^*(\lambda)$
  be the slack matrices for \eqref{eq:sdp} and \eqsdp.
  Observe that
  \begin{gather}
    \|\mathcal{A}(X){-}b\|
    \,\leq\,
    \varepsilon_0'
    \,\leq\,
    O(\sigma),
    \\
    \label{eq:sbar}
    \| \bar S X \|
    \,\leq\,
    \| S X \| + \| (\bar S {-} S) X \|
    \,\leq\,
    \varepsilon_1' + \sigma \|X\|
    \,\leq\,
    O(\sigma),
    \\
    \label{eq:sbar2}
    \bar S
    \,\succeq\,
    S
    - \|\bar S {-} S\| \id_n
    \succeq
    -(\varepsilon_2' \!+\! \sigma)\id_n
    \succeq
    -O(\sigma)\id_n.
  \end{gather}
  So the optimality conditions of \eqsdp hold with
  $\varepsilon''_0,\varepsilon''_1,\varepsilon''_2 = O(\sigma)$.
\end{proof}

\subsection{Feasibility}

We now consider the computation of a feasible point.
Consider the following setting:
\begin{itemize}
  \item
    $p$ satisfies $\tau(p) \geq (1{+}\eta)m \!+\! \eta\, t$
    for some given constants $\eta,t \!\in\! \RR_+$.
  \item
    $b$ is fixed and $\mathcal{A}$ is uniformly distributed on a ball
    $\ball{\bar{\mathcal{A}}}{\sigma}$.
  \item $\exists\alpha\in\RR_+$ and a matrix $Y_0$ such that
    $\mathfrak{M}_{\alpha}$ is compact and $Y_0 \in \mathfrak{M}_{\alpha}$
  \item
    $R_Y \in \RR_+$ is a constant that bounds $\|Y\|$,
    for $Y \!\in \mathfrak{M}_\alpha$.
  \item
    \eqref{eq:ls-bm} is solved with the method from \Cref{thm:complexityunconstrained}
    initialized at~$Y_0$.
\end{itemize}

\begin{theorem}[Polytime feasibility] \label{thm:ls-complete}
  Let $\rho \!\in\! (0,1]$ arbitrary,
  and let
  \begin{gather*}
    \varepsilon_1 \,:=\, \epsilon^{3/2},
    \quad
    \varepsilon_2 \,:=\, \epsilon,
    \quad\text{ with }\quad
    \epsilon \,:=\,
    K^{-1}\, \left(\rho\, \sigma^2/2 n^3 m\right)^{1+1/\eta},
  \end{gather*}
  where $K$ is the problem dependent constant
  \begin{gather*}
    K := R_A\,(3\kappa)^{1/\eta},
    \quad\text{ with }\quad
    \kappa := 2(R_A R_Y^2 \!+\! \|b\|)R_A,
    \quad
    R_A := \|\bar{\mathcal{A}}\| \!+\! \sigma.
  \end{gather*}
  The algorithm from \Cref{thm:complexityunconstrained} returns a point $Y$ after $O(\epsilon^{-3})$ function evaluations.
  With probability at least
  $1 {-} O(\sigma^2/n^3 m)^{t} $,
  we have that $Y Y^T$ is
  $( \rho\, \sigma, \epsilon^{3/2} R_Y, 5 \epsilon)$-approximately optimal for~\eqref{eq:ls}.
\end{theorem}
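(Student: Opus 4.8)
The plan is to follow the same template as the proof of \Cref{thm:complete}: first invoke the finite-time complexity bound, then verify that the output meets the hypotheses of the relevant ``critical $\Rightarrow$ feasible'' result, and finally read off the approximate optimality. Concretely, I would start by checking the hypotheses of \Cref{thm:complexityunconstrained} for the objective $f(Y) = \|\mathcal{A}(Y Y^T) - b\|^2$ of \eqref{eq:ls-bm}. Since $\mathfrak{M}_\alpha$ is compact and $Y_0 \in \mathfrak{M}_\alpha$, the sublevel set $\{Y : f(Y) \le \alpha^2\}$ is compact, so $f, \nabla f, \nabla^2 f$ are uniformly bounded and Lipschitz there; hence the ARC method initialized at $Y_0$ produces an $(\varepsilon_1,\varepsilon_2)$-AC point $Y$ in $O(\max\{\varepsilon_1^{-2},\varepsilon_2^{-3}\})$ iterations. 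With $\varepsilon_1 = \epsilon^{3/2}$ and $\varepsilon_2 = \epsilon$, both terms are $O(\epsilon^{-3})$, which gives the claimed iteration count.

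Next I would apply \Cref{thm:ls-main2} to rule out spurious AC points. The key is to check that the quantities match up: with the present choice, $\delta := \varepsilon_1 R_A / \sqrt{\varepsilon_2} = \epsilon^{3/2} R_A / \sqrt{\epsilon} = \epsilon\, R_A$, and substituting $\epsilon = K^{-1}(\rho\sigma^2/e^2 n^6)^{1+1/\eta}$ with $K = R_A(3\kappa)^{1/\eta}$ yields exactly $\delta \le (1/3\kappa)^{1/\eta}(\rho\sigma^2/e^2 n^6)^{1+1/\eta}$, the bound required in \Cref{thm:ls-main2}. The hypotheses $\tau(p) \ge (1{+}\eta)m + \eta t$ and $\varepsilon_0 := \rho\sigma \ge \rho\sigma$ are satisfied by assumption, and one should also note $R_Y$ bounds $\|Y\|$ on $\mathfrak{M}_\alpha$ so $Y$ is of bounded norm. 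Therefore $\mathcal{A} \notin \mathscr{A}_{\boldsymbol\varepsilon}$ with probability at least $1 - 8e(\rho\sigma^2/3\kappa e^2 n^6)^t = 1 - O(\sigma/n^3)^{2t}$, where one absorbs the constants into the $O(\cdot)$ (using $\sigma^2/n^6 = (\sigma/n^3)^2$). This means $Y$ is \emph{not} a spurious $\boldsymbol\varepsilon$-AC point. Since $Y$ \emph{is} $(\varepsilon_1,\varepsilon_2)$-AC and has $\|Y\| \le R_Y$, the only way out of the definition of ``spurious'' is that $Y Y^T$ is $\boldsymbol\varepsilon'$-approximately optimal for \eqref{eq:ls}, with $\boldsymbol\varepsilon' = (\varepsilon_0, R_Y\varepsilon_1, 5\varepsilon_2) = (\rho\sigma, \epsilon^{3/2}R_Y, 5\epsilon)$, which is precisely the claimed conclusion.

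The one place that needs a little care — and what I expect to be the main (though still routine) obstacle — is the exact bookkeeping of constants, in two spots. First, I must confirm that $\rho$-LICQ does \emph{not} need to be assumed here: unlike \Cref{thm:complete}, the feasibility problem is unconstrained, so there are no Lagrange multipliers and no LICQ hypothesis, and one should double-check that \Cref{thm:ls-main2} indeed does not implicitly require it (it does not — $\kappa$ there is defined purely in terms of $R_A, R_Y, \|b\|$). Second, the probability arithmetic: one must verify $(\rho\sigma^2/3\kappa e^2 n^6)^t$ collapses to $O(\sigma/n^3)^{2t}$, i.e.\ that $\rho/3\kappa e^2$ is a problem-dependent constant independent of $\sigma$ and $n$ — but $\kappa = 2(R_A R_Y^2 + \|b\|)R_A$ with $R_A = \|\bar{\mathcal{A}}\| + \sigma$, so for bounded $\sigma$ this is indeed $O(1)$, and $\sigma^2/n^6 = (\sigma/n^3)^2$ gives the exponent $2t$. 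Beyond these two checks the proof is a direct substitution, so I would keep it to a few lines mirroring the proof of \Cref{thm:complete}.
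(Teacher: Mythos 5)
Your proposal is correct and follows essentially the same approach as the paper's proof: invoke \Cref{thm:complexityunconstrained} using compactness of $\mathfrak{M}_\alpha$ to get an $(\varepsilon_1,\varepsilon_2)$-AC point in $O(\epsilon^{-3})$ evaluations, verify that $\delta = \varepsilon_1 R_A/\sqrt{\varepsilon_2} = \epsilon R_A$ matches the bound in \Cref{thm:ls-main2}, and conclude via the definition of spurious AC points. Your extra checks (no LICQ needed, and the $\sigma^2/n^6 = (\sigma/n^3)^2$ exponent bookkeeping) are correct observations that the paper leaves implicit.
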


\begin{proof}
  The smoothness assumptions in \Cref{thm:complexityunconstrained} are satisfied since $\mathfrak{M}_\alpha$ is compact.
  Therefore $Y$ is an $(\varepsilon_1,\varepsilon_2)$-AC point.
  Note that
  \begin{align*}
    \delta \,:=\,
    \varepsilon_1 R_A/ \sqrt{\varepsilon_2}
    \,=\,
    \epsilon\, R_A
    \,=\,
    (1/3\kappa)^{1/\eta} (\rho\, \sigma^2/2 n^3 m)^{1+1/\eta}
  \end{align*}
  is as in \Cref{thm:ls-main2}.
  Hence $Y Y^T$ is $( \rho\, \sigma, \varepsilon_1 R_Y, 5 \varepsilon_2)$-approximately optimal for~\eqref{eq:ls}
  with probability $1 \!-\! O(\sigma^2/n^3 m)^{t}. $
\end{proof}

\begin{remark}
  The above theorem holds even if the optimal value of~\eqref{eq:ls} is nonzero.
  In the special case that the optimal value is zero,
  then by \Cref{thm:ls-approxoptimal} we have that
  \begin{align*}
    \|\mathcal{A}(Y Y^T) {-} b\| \leq
    \max\{ \varepsilon_0',\;
    {n}^{1/4} (\varepsilon_1'{+}\varepsilon_2' R_Y)^{1/2}\}.
  \end{align*}
\end{remark}

Let \eqls denote the instance of problem \eqref{eq:ls} in which we use the unperturbed constraints~$\bar{\mathcal{A}}$.
We next show that $Y Y^T$ is also approximately optimal for~\eqls.

\begin{corollary} \label{thm:ls-complete2}
  Consider the setup of \Cref{thm:ls-complete}.
  With probability at least
  $1 {-} O(\sigma^2/n^3 m)^{t} $,
  the matrix $Y Y^T$ is
  $(\varepsilon_0'',\varepsilon_1'',\varepsilon_2'')$-approximately optimal for~\eqls, where
  $\varepsilon_0'',\varepsilon_1'',\varepsilon_2'' = O(\sigma)$.
\end{corollary}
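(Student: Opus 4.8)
The plan is to mimic the argument of \Cref{thm:complete2}, transferring an approximate-optimality certificate for the perturbed problem \eqref{eq:ls} to the unperturbed problem \eqls by absorbing the perturbation of $\mathcal{A}$ into the error parameters. First I would recall from \Cref{thm:ls-complete} that, with probability at least $1-O(\sigma/n^3)^{2t}$, the matrix $X := YY^T$ is $(\rho\sigma,\varepsilon_1 R_Y, 5\varepsilon_2)$-approximately optimal for \eqref{eq:ls}, and that $X\succeq 0$ automatically. Since $\varepsilon_1 = \epsilon^{3/2}$ and $\varepsilon_2 = \epsilon$ with $\epsilon = K^{-1}(\rho\sigma^2/e^2n^6)^{1+1/\eta} = O(\sigma^{2})$ (treating $n,\rho,\eta,t,R_A,R_Y,\|b\|$ as constants), all three error parameters $\rho\sigma$, $\epsilon^{3/2}R_Y$, $5\epsilon$ are already $O(\sigma)$, so the perturbed certificate itself is $O(\sigma)$-accurate.

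Next I would compare the least-squares data for $\mathcal{A}$ and $\bar{\mathcal{A}}$. Write $f(X) = \|\mathcal{A}(X)-b\|^2$ and $\bar f(X) = \|\bar{\mathcal{A}}(X)-b\|^2$, with gradients $S(X) = 2\mathcal{A}^*(\mathcal{A}(X)-b)$ and $\bar S(X) = 2\bar{\mathcal{A}}^*(\bar{\mathcal{A}}(X)-b)$. Because $\|\mathcal{A}-\bar{\mathcal{A}}\|\le\sigma$ and $\|X\|\le R_Y^2$ on the compact set $\mathfrak{M}_\alpha$, one gets $\|S(X)-\bar S(X)\| = O(\sigma)$ by a routine triangle-inequality expansion (each of the two factors $\mathcal{A}^*$ and $\mathcal{A}(X)-b$ moves by $O(\sigma)$, and both are bounded). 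Consequently $\|\bar S(X)\,X\| \le \|S(X)\,X\| + \|(\bar S(X)-S(X))X\| \le \varepsilon_1 R_Y + O(\sigma)R_Y^2 = O(\sigma)$ and $\bar S(X)\succeq S(X) - \|\bar S(X)-S(X)\|\id_n \succeq -(5\varepsilon_2 + O(\sigma))\id_n = -O(\sigma)\id_n$, exactly as in \eqref{eq:sbar} and \eqref{eq:sbar2}. For the $\varepsilon_0$-branch of the definition of approximate optimality, $\|\bar{\mathcal{A}}(X)-b\| \le \|\mathcal{A}(X)-b\| + \|\mathcal{A}-\bar{\mathcal{A}}\|\|X\| \le \rho\sigma + \sigma R_Y^2 = O(\sigma)$ covers the disjunct directly. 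Either way the conditions in \eqref{eq:ls-approxoptimal} hold for \eqls with parameters $\varepsilon_0'',\varepsilon_1'',\varepsilon_2'' = O(\sigma)$, and the probability bound is inherited unchanged from \Cref{thm:ls-complete}.

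There is essentially no hard step here; the statement is a robustness/continuity observation, so the only thing to be careful about is bookkeeping — making sure that the constants hidden in $O(\sigma)$ genuinely depend only on the problem data declared fixed ($n$, $R_A$, $R_Y$, $\|b\|$, $\rho$, $\eta$, $t$) and not on $\sigma$ itself, and that the probabilistic event is the same one from \Cref{thm:ls-complete} so no additional union bound is needed. The one subtlety worth flagging is that $R_A = \|\bar{\mathcal{A}}\| + \sigma$ depends mildly on $\sigma$, but since $\sigma$ is to be thought of as small (indeed $\le 1$ without loss of generality), $R_A$ is bounded by a constant and this causes no trouble. I would therefore present the proof as a three-line computation paralleling that of \Cref{thm:complete2}, and conclude.

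\begin{proof}
  Let $X := YY^T$. By \Cref{thm:ls-complete}, with probability at least $1-O(\sigma/n^3)^{2t}$ the matrix $X\succeq 0$ is $(\rho\sigma,\varepsilon_1 R_Y,5\varepsilon_2)$-approximately optimal for~\eqref{eq:ls}; write $\varepsilon_0' := \rho\sigma$, $\varepsilon_1' := \varepsilon_1 R_Y = \epsilon^{3/2}R_Y$, $\varepsilon_2' := 5\varepsilon_2 = 5\epsilon$. Since $\epsilon = O(\sigma^2)$, all of $\varepsilon_0',\varepsilon_1',\varepsilon_2'$ are $O(\sigma)$. Set $S := S(X) = 2\mathcal{A}^*(\mathcal{A}(X){-}b)$ and $\bar S := 2\bar{\mathcal{A}}^*(\bar{\mathcal{A}}(X){-}b)$, the gradients of the objectives of~\eqref{eq:ls} and~\eqls. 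On the compact set $\mathfrak{M}_\alpha$ we have $\|X\| \le R_Y^2$, and $\|\mathcal{A}-\bar{\mathcal{A}}\| \le \sigma$, so a triangle-inequality expansion gives $\|S - \bar S\| = O(\sigma)$. If the first disjunct of~\eqref{eq:ls-approxoptimal} holds for~\eqref{eq:ls}, then
  \begin{align*}
    \|\bar{\mathcal{A}}(X){-}b\|
    \,\le\,
    \|\mathcal{A}(X){-}b\| + \|\mathcal{A}{-}\bar{\mathcal{A}}\|\,\|X\|
    \,\le\,
    \varepsilon_0' + \sigma R_Y^2
    \,=\,
    O(\sigma).
  \end{align*}
  Otherwise the second disjunct holds, and then
  \begin{gather*}
    \|\bar S X\|
    \,\le\,
    \|S X\| + \|(\bar S {-} S) X\|
    \,\le\,
    \varepsilon_1' + \|\bar S {-} S\|\,\|X\|
    \,=\,
    O(\sigma),
    \\
    \bar S
    \,\succeq\,
    S - \|\bar S {-} S\|\,\id_n
    \,\succeq\,
    -(\varepsilon_2' + O(\sigma))\,\id_n
    \,=\,
    -O(\sigma)\,\id_n.
  \end{gather*}
  In either case $X\succeq 0$ satisfies~\eqref{eq:ls-approxoptimal} for~\eqls with $\varepsilon_0'',\varepsilon_1'',\varepsilon_2'' = O(\sigma)$, and the probability bound is the one from \Cref{thm:ls-complete}.
\end{proof}
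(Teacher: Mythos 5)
Your proof is correct and follows essentially the same route as the paper's: split on the two disjuncts of the approximate-optimality definition, handle the feasibility branch with a one-line triangle inequality, and handle the stationarity branch by bounding $\|\bar S - S\|$ by $O(\sigma)$ and propagating that into $\|\bar S X\|$ and the semidefinite lower bound on $\bar S$. The only cosmetic difference is that the paper factors the bound on $\|\bar S - S\|$ through intermediate multipliers $\lambda = 2(\mathcal{A}(X) - b)$ and $\bar\lambda = 2(\bar{\mathcal{A}}(X) - b)$, whereas you fold that into a single ``triangle-inequality expansion''; both lead to the same $O(\sigma)$ estimate.
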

\begin{proof}
  We know that the matrix $X := Y Y^T$ is $(\varepsilon'_0,\varepsilon'_1,\varepsilon'_2)$-approximately optimal for~\eqref{eq:sdp} with high probability.
  There are two cases.
  The first case is that
  $\|{\mathcal{A}}(X) {-} b\| \leq\nobreak \varepsilon_0'$,
  which implies that
  \begin{align*}
    \|\bar{\mathcal{A}}(X) {-} b\|
    \,\leq\,
    \|{\mathcal{A}}(X) {-} b\|
    + \|(\bar{\mathcal{A}} {-} \mathcal{A})X \|
    \,\leq\,
    \varepsilon_0' + \sigma \|X\|
    \,\leq\,
    O(\sigma).
  \end{align*}
  This means that $\varepsilon_0'' = O(\sigma)$.
  Consider the following variables:
  \begin{align*}
    \lambda := 2(\mathcal{A}(X) \!-\! b),\quad
    S := \mathcal{A}^*(\lambda),\quad
    \bar\lambda := 2(\bar{\mathcal{A}}(X) \!-\! b),\quad
    \bar S := \bar{\mathcal{A}}^*(\bar \lambda).
  \end{align*}
  The second case is that
  $\|S X\| \leq \varepsilon_1$ and $S \succeq -\varepsilon_2 \id_n$.
  Observe that
  \begin{gather*}
    \|\bar \lambda {-} \lambda \|
    \,\leq\,
    2\, \|\bar{\mathcal{A}}{-}\mathcal{A}\| \, \|X\|
    \,\leq\,
    O(\sigma),
    \\
    \|\bar S {-} S \|
    \,\leq\,
    \|\bar{\mathcal{A}}^*(\bar\lambda {-} \lambda)\|
    + \|(\bar{\mathcal{A}}^*{-} {\mathcal{A}}^*)\lambda\|
    \,\leq\,
    O(\sigma).
  \end{gather*}
  From~\eqref{eq:sbar} and~\eqref{eq:sbar2} we get that
  $\|\bar S X\| \!\leq\! O(\sigma)$ and $\bar S \!\succeq\! -O(\sigma) \id_n$.
  So the optimality conditions of \eqls hold with
  $\varepsilon''_0,\varepsilon''_1,\varepsilon''_2 = O(\sigma)$.
\end{proof}

\section{Experiments}\label{s:experiments}

We present some experimental results to complement our theorems.
We rely on the open source library \texttt{NLopt}~\cite{nlopt}
to solve the nonlinear problem~\eqref{eq:bm}.
Concretely, we use the \emph{augmented Lagrangian method} (ALM) implemented in \texttt{NLopt}
(which is based on~\cite{Birgin2008}),
and we use the preconditioned truncated Newton method as the subroutine.
We also rely on \texttt{Mosek} to solve semidefinite programs.

For our first experiment we consider a random SDP with a planted solution.
More precisely, we consider a matrix $X_0 \in \SS^n$, $X_0 \succeq 0$ of rank~$r$,
where $n:= 50$ and $r \in \{4,7,12\}$.
We then generate a random SDP for which $X_0$ is an optimal solution.
To do so, we first generate $m := \tau(r)$ random constraints that are satisfied at $X_0$.
Afterwards, we find a cost matrix $C$ in the normal cone of~$X_0$ (this requires solving an auxiliary SDP).

For each $r\in \{4,7,12\}$ we generate 100 random SDPs as above.
We solve these SDPs with the Burer-Monteiro method,
using different values of~$p$ (the rank of~$Y$) and random initializations.
The initial points are matrices with i.i.d.\ normalized Gaussian entries.
\Cref{fig:exp1} shows the percentage of experiments solved correctly for each value of $r$ and~$p$.
We regard an experiment as ``correct'' if the criticality conditions from~\eqref{eq:approxoptimal} are satisfied.

\begin{figure}[htb]
  \centering
  \includegraphics[height=130pt]{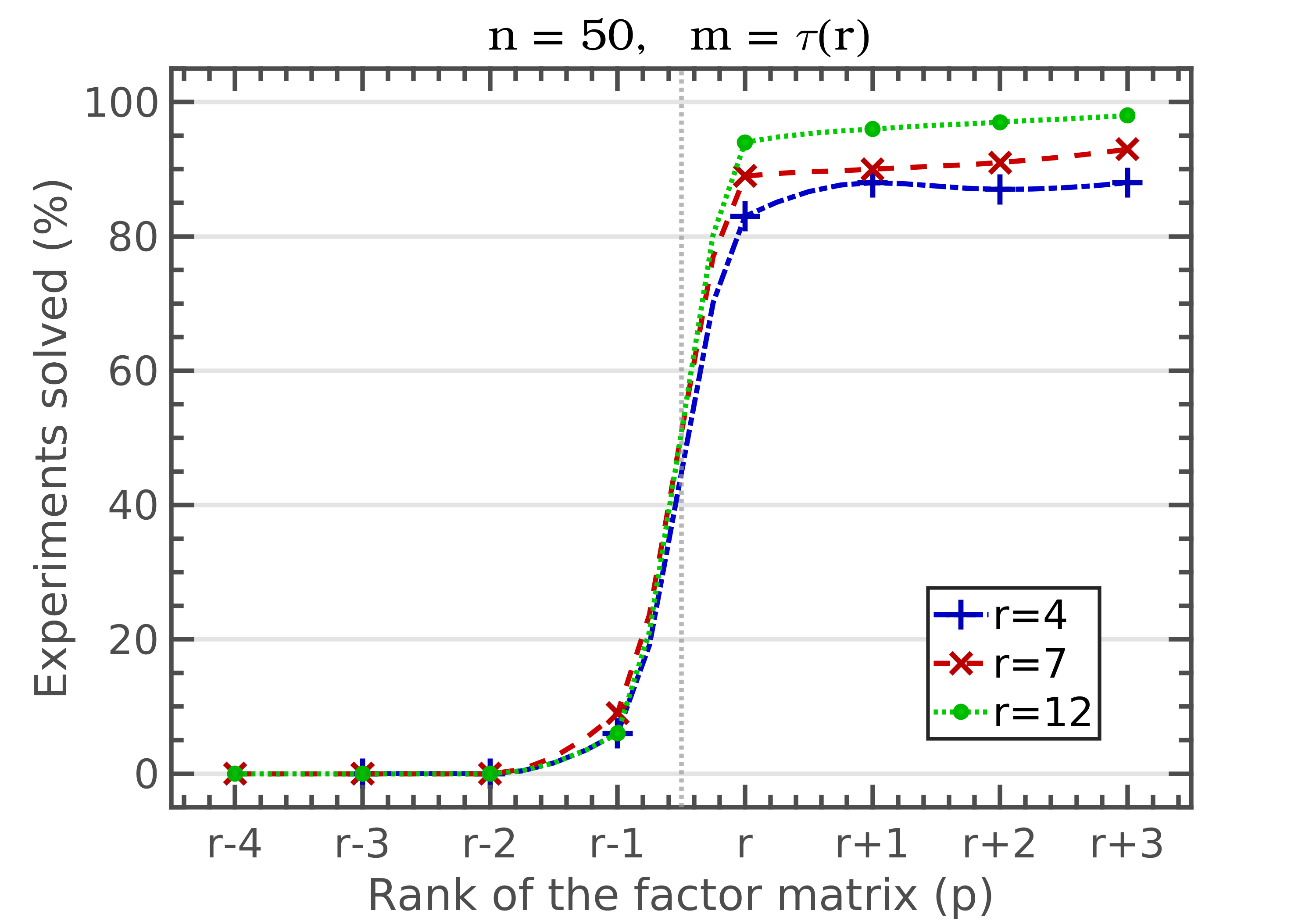}
  \caption{
    Performance of the Burer-Monteiro method for a class of random SDPs with a planted solution of rank~$r$.
  }
  \label{fig:exp1}
\end{figure}

\Cref{fig:exp1} illustrates that there is a sharp phase transition at the Barvinok-Pataki bound $p=\nobreak r$.
Above the Barvinok-Pataki bound, the Burer-Monteiro method solves most instances.
Beneath the Barvinok-Pataki bound, it is not just that our techniques stop working, but that the method itself usually fails.
Note that previous work~\cite{Pumir2018,Bhojanapalli2018} required $p$ to be larger than~$3 r/\sqrt{2}$.
However, we see from experiments that the phase transition is much sharper than this.
It remains an interesting open question to investigate for what types of structured SDPs having a smaller $p$ might suffice \cite{Bandeira2016,Mei2017}.

Observe that, even for $p\geq r$, the number of experiments solved is always below~$100\%$.
Nonetheless, the number of bad instances seems to get smaller for larger values of~$p$.
This agrees with our result from \Cref{thm:main}.

For the second experiment we fix the parameters $n:=50$, $m:=28$, and $p:=r:=7$.
Among the 100 random SDPs considered in \Cref{fig:exp1},
we take an instance for which the Burer-Monteiro problem performed badly.
We then perturb this seemingly bad SDP by adding varying amounts of noise~$\sigma$.
For each noise level we solve 70 random experiments,
in which both the perturbations and the initializations are random.
The perturbations consist in adding to each matrix a random matrix with i.i.d.\ Gaussian entries scaled by the noise level.
\Cref{fig:exp2} summarizes the results obtained.

\begin{figure}[htb]
  \centering
  \begin{subfigure}[b]{0.4\textwidth}
    \includegraphics[height=130pt]{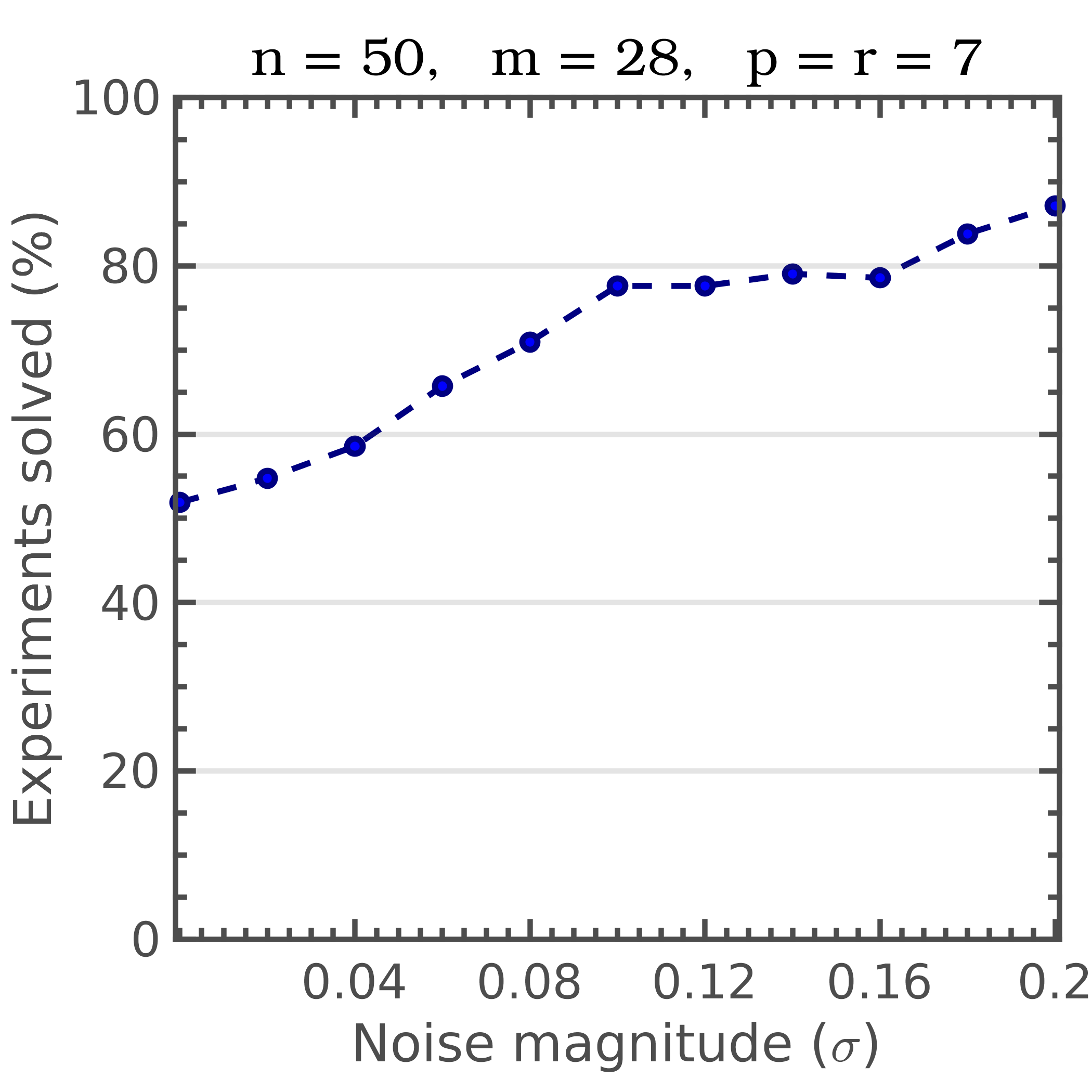}
    \caption{Fraction of SDPs solved}
    \label{fig:exp2a}
  \end{subfigure}
  \begin{subfigure}[b]{0.55\textwidth}
    \includegraphics[height=130pt]{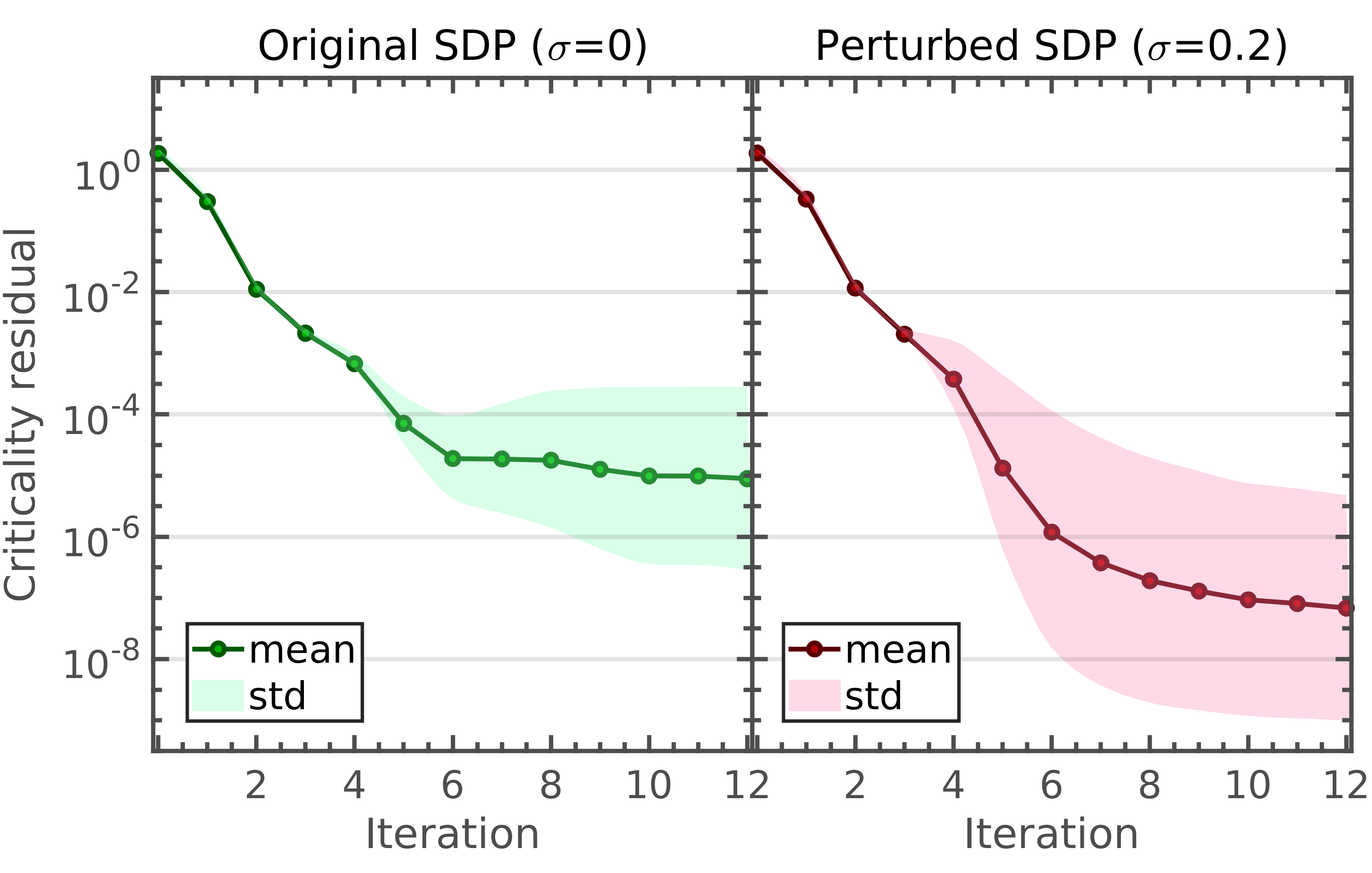}
    \caption{Iteration progress}
    \label{fig:exp2b}
  \end{subfigure}
  \caption{
    Performance of the Burer-Monteiro method
    after perturbing a ``bad'' SDP with different levels of noise.
  }
  \label{fig:exp2}
\end{figure}

\Cref{fig:exp2a} shows the percentage of instances for which the Burer-Monteiro method succeeded for each noise level.
The percentage is with respect to the random perturbation and the random initialization.
For the unperturbed problem ($\sigma\!=\!0$) the method succeeds only for $52\%$ of the random initializations.
This percentage increases as we add noise.
For $\sigma\!=\!0.2$ the method succeeds $87\%$ of the time.
\Cref{fig:exp2b} shows the progress of the algorithm for the cases $\sigma\!=\!0$ and $\sigma\!=\!0.2$.
The progress is measured in terms of the residual of the criticality conditions.
The figure shows the mean and standard deviation of the residual for each iteration of ALM.

\Cref{fig:exp2a,fig:exp2b} illustrate the advantages of smoothing a badly behaved SDP.
\Cref{thm:complete} predicts that the complexity of the algorithm is proportional to $\sigma^{-d}$ for some  exponent~$d$.
So for a fixed number of iterations~$N$,
we should set the noise level proportional to $N^{-1/d}$.
However, our bounds were shown for the algorithm from \Cref{thm:complexity}.
We do not know if they also apply to~ALM.

\section{Discussion}

\subsubsection*{Assumptions of our theorems}

We proceed to discuss the main assumptions of \Cref{thm:complete,thm:ls-complete}.
A major premise in both theorems is that $\tau(p) > (1{+}\eta)\,m$, for some fixed $\eta >0$.
It was shown in \cite{Waldspurger2018} that that the Burer-Monteiro method might have spurious local minima when $\tau(p) \!+\! p \leq m$.
Hence, our required bound on~$p$ is close to the smallest that is needed to guarantee global optimality.

\Cref{thm:complete,thm:ls-complete} also make a compactness assumption on the domain of~\eqref{eq:bm}.
We point out that compactness is a standard assumption for the analysis of polynomial time methods, particularly those relying on smoothness constants.
It is possible to relax this assumption by requiring instead that the objective function is coercive on the domain,
since this ensures that all iterates remain bounded.
% In addition, compactness is also needed in the analysis of the ellipsoid method, and in the Turing model analysis of interior point methods.

Finally, \Cref{thm:complete} assumes that LICQ holds for \eqref{eq:bm} everywhere on its domain.
The LICQ condition for the Burer-Monteiro problem is, in fact, equivalent to the \emph{primal nondegeneracy} of the original~SDP,
see \cite[Prop.6.2]{Boumal2018}.
Nondegeneracy is a well studied notion in the SDP literature~\cite{Chan2008,Alizadeh1997},
which plays a pivotal role in several techniques for solving SDPs
(besides the Burer-Monteiro method).
It is known that nondegeneracy is satisfied generically~\cite{Alizadeh1997},
i.e., every feasible $X$ is primal nondegenerate for generic $\mathcal{A},b,C$.
Moreover, the same holds even if $b,C$ are fixed and only $\mathcal{A}$ is generic, see~\cite[Prop.1]{Cifuentes2019}.
Hence, our LICQ assumption is satisfied for ``most'' problems, in a suitable sense.

\subsubsection*{Underlying local optimization method}

\Cref{thm:complete} relies on \Cref{alg:cartis}, a variant of the method from~\cite{Cartis2018}, to solve the Burer-Monteiro problem.
Such an algorithm was designed for theoretical purposes,
and will likely not perform well in practice.
Nonetheless, the theorems proved in this paper can accommodate other local optimization methods for solving the Burer-Monteiro problem.
Concretely, our results can be applied with any local optimization method that satisfies an analogue of \Cref{thm:complexity},
i.e., that provably finds an AFAC point in polynomial time.

The Burer-Monteiro problem is often solved in practice using ALM.
In particular, this was the method of choice in the original work by Burer and Monteiro~\cite{Burer2003,Burer2005}.
This motivates the following open problem.

\begin{problem}
  If we solve problem~\eqref{eq:bm} using ALM,
  can we provably find an approximately optimal solution for~\eqref{eq:sdp} in polynomial time?
\end{problem}

To answer the above question, we need to show that ALM converges to an AFAC point in polynomial time.
A step in this direction was recently given by Sahin et al.~\cite{Sahin2019}.
They proved that ALM computes a point satisfying the 2nd-order condition for the augmented Lagrangian function.
However, this does not imply that the 2nd-order condition holds for the original problem.

\subsection*{Acknowledgments}
Ankur Moitra was supported in part by a Microsoft Trustworthy AI Grant, NSF CAREER Award CCF-1453261, NSF Large CCF-1565235, a David and Lucile Packard Fellowship, an Alfred P.\ Sloan Fellowship and an ONR Young Investigator Award.
\appendix
\section{Computing AFAC points}\label{s:outer}

In this appendix we prove \Cref{thm:complexity},
which states that AFAC points can be computed in polynomial time.

\subsection{The algorithm}

Cartis et al.~\cite{Cartis2018} proposed a method for computing $q$-th order critical points for $q\in \{1,2,3\}$.
However, they use a nonstandard notion of criticality
which is not easy to translate into our setting.
We present here a slight modification of this algorithm that accommodates more general criticality conditions.

We focus on the constrained optimization problem~\eqref{eq:nonlinear}.
Consider the least squares functions
\begin{align*}
\nu(y) \,:=\, \|h(y)\|^2,
\qquad
\mu(t,y) \,:=\, (f(y){-}t)^2 + \|h(y)\|^2.
\end{align*}
We denote $\mu_t = \mu(t,\cdot)$ the function obtained by fixing the value of~$t$.
\Cref{alg:cartis} below is a variant of the method from~\cite{Cartis2018}.
It consists of two phases.
The first phase attempts to find an approximately feasible solution through the unconstrained problem $\min_y \nu(y)$.
If successful, the second phase minimizes~$f$ while preserving feasibility.
To do so, it solves a sequence of problems $\min_y \mu(t_k,y)$,
where the values $\{t_k\}_{k\geq 0}$ are decreasing.

\begin{algorithm}
  \caption{Constrained optimization algorithm based on~\cite{Cartis2018} }
  \label{alg:cartis}
  \begin{algorithmic}
    \Require{Initial point $y_0 \!\in\! \RR^n$, tolerances $\epsilon_0 \!\in\! \RR_+$, $\boldsymbol\epsilon \!\in\! \RR_+^q$, constant $\delta \!\in\! (0,1)$.}
    \Ensure{A point $y \in \RR^n$ and a number $t \leq f(y)$.}
    \State \centerline{\textsc{Phase I}}
    \State $y_1 := \mathrm{local\,min}_y \, \nu(y)$
    starting with $y_0$
    \State $t_0 := f(y_1)$
    \If{$\nu(y_1)>(\delta\epsilon_0)^2$}
    \;\Return $(y_1,t_0)$ \EndIf
    \State \centerline{\textsc{Phase II}}
    \State $t_1 := f(y_1) -(\epsilon_0^2 \!-\! \nu(y_1))^{1/2}$
    \For {$k=2,3,4,\dots$}
    \State $y_{k} :=$ $\mathrm{local\,min}_y \, \mu(t_{k-1},y)$
    starting with $y_{k-1}$
    \If{$\mu(t_{k-1},y_{k})<(\delta\epsilon_0)^2$}
    \Comment{case (a)}
    \State $t_{k} := f(y_{k}) -(\epsilon_0^2 \!-\! \nu(y_{k}))^{1/2}$
    \If{$\boldsymbol\chi(\mu_{t_{k}},y_{k}) \leq \boldsymbol\epsilon$}
    \;\Return $(y_{k},t_{k})$ \EndIf
    \EndIf
    \If{$\mu(t_{k-1},y_{k})\geq(\delta\epsilon_0)^2$ \& $f(y_{k})<t_{k-1}$}
    \Comment{case (b)}
    \State $t_{k} := 2 f(y_{k}) - t_{k-1}$
    \If{$\boldsymbol\chi(\mu_{t_{k}},y_{k}) \leq \boldsymbol\epsilon$}
    \;\Return $(y_{k},t_{k})$ \EndIf
    \EndIf
    \If{$\mu(t_{k-1},y_{k})\geq(\delta\epsilon_0)^2$ \& $f(y_{k})\geq t_{k-1}$}
    \Comment{case (c)}
    \State \Return $(y_{k},t_{k})$, with $t_k := t_{k-1}$
    \EndIf
    \EndFor
  \end{algorithmic}
\end{algorithm}

\Cref{alg:cartis} relies on an \emph{inner method} for solving the unconstrained problem
$\min_y \psi(y)$,
where $\psi$ is either $\nu$ or $\mu_t \!=\! \mu(t,\cdot)$.
Given
$\boldsymbol\epsilon \!=\! (\epsilon_1, \dots, \epsilon_q) \!\in\! \RR_+^q$,
the inner method looks for a point $y$ such that
$ \boldsymbol\chi(\psi,y) \leq \boldsymbol\epsilon$,
for some \emph{criticality measure}
$ \boldsymbol\chi = (\chi_1,\dots, \chi_q)$.
We assume that the $j$-th component $\chi_j(\psi,y)$ only involves derivatives $\{\nabla^d \psi(y)\}_{d\leq j}$ up to order~$j$.
For instance, the AC-criticality condition from~\eqref{eq:conditionsunconstrainedapprox} corresponds to the case
\begin{align}\label{eq:criticality}
  \boldsymbol\chi^{\mathrm{AC}}(\psi,y)
  \,:=\,
  \left(\,\|\nabla \psi(y)\|, \, -\mineig (\nabla^2 \psi(y))\,\right).
\end{align}
Given an initial point $y^0$ and tolerances
$\boldsymbol\epsilon \!\in\! \RR_+^q$,
the inner method produces iterates~$\{y^i\}_{i=1}^N$.
We assume that the final point $y^N$ achieves these tolerances
and that the objective function decreases proportionately to~$N$:
\begin{gather}\label{eq:inner}
  \boldsymbol\chi(\psi,y^N) \leq \boldsymbol\epsilon
  \qquad\text{ and }\qquad
  \psi(y^0) - \psi(y^N) \geq N\, \kappa_{\psi}\, p(\boldsymbol\epsilon),
\end{gather}
for some constant $\kappa_\psi \!>\!0$ and some function~$p$.
Hence, the number of iterations~$N$ is proportional to $p(\boldsymbol\epsilon)^{-1}$.

The next theorem provides guarantees for \Cref{alg:cartis}.
Our proof closely follows that of~\cite[Thm.4.5]{Cartis2018}
but has the advantage that it applies to a general class of criticality measures,
as opposed to~\cite{Cartis2018}, which relies on a particular nonstandard measure of criticality.
However our complexity is larger than in~\cite{Cartis2018} by a factor of~$\epsilon_0^{-1}$.

\begin{theorem} \label{thm:cartis}
  Assume that:
  \begin{itemize}
    \item The inner method satisfies~\eqref{eq:inner} for the function~$\nu$ with constant~$\kappa_\nu$.
    \item The inner method satisfies~\eqref{eq:inner} for the function~$\mu_t$,
      and the constant $\kappa_\mu$ is independent of~$t$.
    \item There exists $\beta \!>\! \epsilon_0$ and $f_{\mathrm{low}} \!\in\! \RR$
      such that $f(y) \!\geq\! f_{\mathrm{low}}$ for all $y \in \mathfrak{M}_\beta$,
      where $\mathfrak{M}_\beta := \{y: \|h(y)\| {\leq} \beta\}$.
  \end{itemize}
  Then the total number of inner iterations made in \Cref{alg:cartis} is at most
  \begin{align}\label{eq:numiters}
    p(\boldsymbol\epsilon)^{-1}
    \left( \kappa_\nu^{-1}\, \nu(y_0) \,+\,
      \epsilon_0\, \kappa_\mu^{-1}\, (1{-}\delta)^{-1}\,
      (f(y_1){-}f_{\mathrm{\mathrm{low}}}{+}\beta) \right),
  \end{align}
  and the algorithm returns a pair $(y,t)$ such that:
  \begin{subequations} \label{eq:cartis}
  \begin{align}
    \label{eq:cartisgood}
    &\text{either }
    &&t < f(y),
    &&\|h(y)\| \leq \epsilon_0,
    &&\boldsymbol\chi(\mu_t,y) \leq \boldsymbol\epsilon,
    \\
    \label{eq:cartisbad}
    &\text{or }
    &&t = f(y),
    &&\|h(y)\| > \delta \epsilon_0,
    &&\chi_1(\nu,y) \leq \epsilon_1.
  \end{align}
  \end{subequations}
\end{theorem}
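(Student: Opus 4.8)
The plan is to track the value sequence $\{t_k\}$ and the feasibility residuals $\{\nu(y_k)\}$ through the two phases, and to argue that (i) each inner call makes definite progress on either $\nu$ or $\mu_{t_{k-1}}$, so the total iteration count is controlled by telescoping, and (ii) whenever the algorithm returns, the returned pair lies in one of the two cases \eqref{eq:cartis}. First I would dispatch Phase~I: the inner method applied to $\nu$ starting at $y_0$ produces $y_1$ with $\nu(y_0)-\nu(y_1)\ge N_1\,\kappa_\nu\,p(\boldsymbol\epsilon)$ by \eqref{eq:inner}, so $N_1 \le p(\boldsymbol\epsilon)^{-1}\kappa_\nu^{-1}\nu(y_0)$, which is the first summand in \eqref{eq:numiters}. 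If $\nu(y_1) > (\delta\epsilon_0)^2$ the algorithm returns $(y_1,t_0)$ with $t_0 = f(y_1)$; since the inner method drove $\chi_1(\nu,y_1)\le\epsilon_1$ and $\|h(y_1)\| = \sqrt{\nu(y_1)} > \delta\epsilon_0$, this is exactly case \eqref{eq:cartisbad}. Otherwise $\nu(y_1)\le(\delta\epsilon_0)^2 < \epsilon_0^2$, so the quantity $(\epsilon_0^2-\nu(y_1))^{1/2}$ in the definition of $t_1$ is real, and we enter Phase~II with $t_1 < f(y_1)$.

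For Phase~II the key invariant I would establish by induction on $k$ is: at the start of iteration $k$ we have $t_{k-1} \le f(y_{k-1})$, and moreover $\mu(t_{k-1},y_{k-1}) \le \epsilon_0^2$ (this holds for $k=2$ because $\mu(t_1,y_1) = (f(y_1)-t_1)^2 + \nu(y_1) = (\epsilon_0^2-\nu(y_1)) + \nu(y_1) = \epsilon_0^2$). The inner method applied to $\mu_{t_{k-1}}$ from $y_{k-1}$ gives $N_k$ iterations with $\mu(t_{k-1},y_{k-1}) - \mu(t_{k-1},y_k) \ge N_k\,\kappa_\mu\,p(\boldsymbol\epsilon)$. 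Now I would split into the three cases of the algorithm. In case (a), $\mu(t_{k-1},y_k) < (\delta\epsilon_0)^2$, so the drop is at least $\epsilon_0^2 - (\delta\epsilon_0)^2 = (1-\delta^2)\epsilon_0^2 \ge (1-\delta)\epsilon_0^2$; also $\nu(y_k)\le\mu(t_{k-1},y_k) < (\delta\epsilon_0)^2 < \epsilon_0^2$ so $t_k$ is well-defined, $t_k < f(y_k)$, and $\mu(t_k,y_k) = \epsilon_0^2$, restoring the invariant; if the criticality test passes we return in case \eqref{eq:cartisgood} since $\|h(y_k)\| = \sqrt{\nu(y_k)} \le \epsilon_0$. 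In case (b), $\mu(t_{k-1},y_k)\ge(\delta\epsilon_0)^2$ but $f(y_k) < t_{k-1}$: here I would show $f(y_k) - t_{k-1} < 0$ so that $|f(y_k)-t_{k-1}|^2 \le \mu(t_{k-1},y_k) \le \epsilon_0^2$, hence $t_k := 2f(y_k)-t_{k-1}$ satisfies $t_k < f(y_k)$ (since $f(y_k)<t_{k-1}$) and $\mu(t_k,y_k) = (f(y_k)-t_k)^2 + \nu(y_k) = (t_{k-1}-f(y_k))^2 + \nu(y_k) = \mu(t_{k-1},y_k) \le \epsilon_0^2$; so $\nu(y_k)\le\epsilon_0^2$ and the return (if the test passes) is again case \eqref{eq:cartisgood}. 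Case (c), $\mu(t_{k-1},y_k)\ge(\delta\epsilon_0)^2$ and $f(y_k)\ge t_{k-1}$: the algorithm returns $(y_k, t_{k-1})$; I must check this is case \eqref{eq:cartisgood} or \eqref{eq:cartisbad} — actually here the inner method guarantees $\boldsymbol\chi(\mu_{t_{k-1}},y_k)\le\boldsymbol\epsilon$, and I claim $\|h(y_k)\|\le\epsilon_0$ holds using $\mu(t_{k-1},y_k)\le\mu(t_{k-1},y_{k-1})\le\epsilon_0^2$, and $t_{k-1}\le f(y_{k-1})$ — I would need to upgrade this to $t_{k-1}\le f(y_k)$, which is exactly the case hypothesis $f(y_k)\ge t_{k-1}$; with $t_k = t_{k-1} \le f(y_k)$ and $\boldsymbol\chi(\mu_{t_k},y_k)\le\boldsymbol\epsilon$ this lands in \eqref{eq:cartisgood} (treating the equality $t=f(y)$ edge case as a limiting instance of the strict inequality, or noting it forces $\nu(y_k)$ small and reduces to \eqref{eq:cartisbad}).

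It remains to bound the total Phase~II iteration count. The trick, following \cite{Cartis2018}, is that in every non-terminating iteration the value $t_k$ strictly decreases by a controllable amount tied to the progress on $\mu$: in case (a) the drop $\mu(t_{k-1},y_k)$ below $(\delta\epsilon_0)^2$ costs the inner method at least $(1-\delta)\epsilon_0^2/(\kappa_\mu p(\boldsymbol\epsilon))$ iterations, while $t_{k-1}-t_k = (\epsilon_0^2-\nu(y_{k-1}))^{1/2} + (\text{something})$ — more precisely I would relate $f(y_{k-1})-t_{k-1}$ and $f(y_k)-t_k$ so that $\sum_k (t_{k-1}-t_k)$ telescopes and is bounded by $f(y_1) - f_{\mathrm{low}} + \beta$ using the third hypothesis (the iterates stay in $\mathfrak{M}_\beta$ because $\nu(y_k)\le\epsilon_0^2<\beta^2$). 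Combining, the number of case-(a)/(b) inner iterations is at most $\epsilon_0\,\kappa_\mu^{-1}(1-\delta)^{-1} p(\boldsymbol\epsilon)^{-1}(f(y_1)-f_{\mathrm{low}}+\beta)$, matching the second summand in \eqref{eq:numiters}, and case (c) occurs at most once (it terminates). Adding the Phase~I bound gives \eqref{eq:numiters}.

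\textbf{Main obstacle.} The delicate part is the bookkeeping in Phase~II: showing that the $t_k$ updates keep $t_k \le f(y_k)$ and $\mu(t_k,y_k)\le\epsilon_0^2$ simultaneously across all three cases, and then extracting from the inner-method progress bound \eqref{eq:inner} a lower bound on iterations per outer step that is uniform in $t$ (this is why we assume $\kappa_\mu$ is independent of $t$) and that telescopes against the monotone sequence $\{t_k\}$. The factor $\epsilon_0^{-1}$ loss relative to \cite{Cartis2018} — acknowledged in the remark before the theorem — comes precisely from using the scale $(\delta\epsilon_0)^2$ rather than a criticality-measure-adapted threshold in the case split, so I would not try to optimize it. Handling the $t = f(y)$ boundary case cleanly (case (c), and the returns when $\nu(y_k)=0$) is a minor but necessary annoyance; I would absorb it by allowing the weak inequality $t\le f(y)$ in \eqref{eq:cartisgood} or by a separate one-line argument.
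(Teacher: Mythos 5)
Your proposal follows the same route as the paper's proof (itself a variant of Cartis et~al.): Phase~I is bounded directly from the inner-method progress guarantee~\eqref{eq:inner}; return correctness is a case analysis on the invariants $t_k\le f(y_k)$ and $\nu(y_k)\le\mu(t_k,y_k)\le\epsilon_0^2$; and the Phase~II count combines a within-block $\mu$-telescope (using the carryover $\mu(t_k,y_k)=\mu(t_{k-1},y_k)$ in case~(b) and the reset $\mu(t_k,y_k)=\epsilon_0^2$ in case~(a)) with a $t$-telescope that bounds the number of case-(a) resets by $(f(y_1)-f_{\mathrm{low}}+\beta)/\bigl((1-\delta)\epsilon_0\bigr)$. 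Two small slips worth correcting: you write that a case-(a) transition ``costs the inner method at least $(1-\delta)\epsilon_0^2/(\kappa_\mu p(\boldsymbol\epsilon))$ iterations,'' but what is needed (and true) is an \emph{upper} bound --- the inner iterations over the entire block between two consecutive case-(a) resets total at most $\epsilon_0^2/(\kappa_\mu\,p(\boldsymbol\epsilon))$, since $\mu$ starts the block at $\epsilon_0^2$ and telescopes down to something nonnegative via the case-(b) carryover, with no per-iteration lower bound on the drop. And in case~(c) with $t=f(y)$ the relevant fact is $\nu(y)=\mu_t(y)\ge(\delta\epsilon_0)^2$, i.e.\ $\|h(y)\|>\delta\epsilon_0$ (not ``$\nu(y_k)$ small''), together with $\chi_1(\mu_t,y)=\chi_1(\nu,y)$ because $f(y)-t=0$ kills the cross-term in $\nabla\mu_t$, which is exactly what places this output in~\eqref{eq:cartisbad}. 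These are local fixes; the skeleton and final accounting match the paper's argument.
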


\subsection{Proof of \Cref{thm:cartis}}

Let $K$ be the number of outer iterations of \Cref{alg:cartis}.
Consider the sets of indices:
\begin{align*}
  A &\,:=\, \{1\} \cup \{ k : 2\!\leq\! k \!\leq\! K \text{ and case (a) is applied} \}, \\
  B &\,:=\, \{ k : 2\!\leq\! k \!\leq\! K \text{ and case (b) is applied} \}.
\end{align*}
The following lemma gives a few properties of \Cref{alg:cartis}.
Its proof is identical to \cite[Lem.3.1]{Cartis2018}.

\begin{lemma}
  If the algorithm reaches Phase II, then:
  \begin{align}
    \label{eq:property0}
    &\nu(y_k) \leq \mu(t_k,y_k) \leq \epsilon_0^2,
    &&\kern-1em0 \leq f(y_k) - t_k \leq \epsilon_0,
    &&\text{for } k \geq 1,
    \\
    \label{eq:propertyA}
    &\mu(t_k,y_{k}) = \epsilon_0^2,
    &&\kern-1emt_{k-1} - t_{k} \geq (1{-}\delta)\epsilon_0,
    &&\text{for } k\in A,
    \\
    \label{eq:propertyB}
    &\mu(t_{k},y_{k}) = \mu(t_{k-1},y_{k}),
    &&\kern-1emt_{k-1} > t_k,
    &&\text{for } k\in B,
    \\
    \label{eq:propertyK}
    &\mu(t_{k},y_{k}) \geq (\delta\epsilon_0)^2,
    &&\kern-1em\boldsymbol\chi(\mu_{t_k},y_k) \leq \boldsymbol\epsilon,
    &&\text{for }k=K.
  \end{align}
\end{lemma}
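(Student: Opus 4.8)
The plan is to prove the four displayed properties \eqref{eq:property0}--\eqref{eq:propertyK} directly from the update rules of \Cref{alg:cartis}, by tracking how $t_k$ is defined in each branch and using the elementary identity $\mu(t,y) = (f(y)-t)^2 + \nu(y)$.

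\textbf{Property \eqref{eq:property0}.} I would argue by induction on $k$. For $k=1$: since Phase~II is reached, the check $\nu(y_1) \le (\delta\epsilon_0)^2 \le \epsilon_0^2$ holds, and $t_1 := f(y_1) - (\epsilon_0^2 - \nu(y_1))^{1/2}$ is well-defined with $f(y_1) - t_1 = (\epsilon_0^2-\nu(y_1))^{1/2} \in [0,\epsilon_0]$; squaring gives $\mu(t_1,y_1) = (f(y_1)-t_1)^2 + \nu(y_1) = \epsilon_0^2$, so $\nu(y_1) \le \mu(t_1,y_1) = \epsilon_0^2$. For the inductive step, in each of the three cases I would check the two claims. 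In case (a), $t_k := f(y_k) - (\epsilon_0^2 - \nu(y_k))^{1/2}$ is defined because $\nu(y_k) \le \mu(t_{k-1},y_k) < (\delta\epsilon_0)^2 \le \epsilon_0^2$ (the inner method only decreases $\mu_{t_{k-1}}$, and $\mu(t_{k-1},y_{k-1}) \le \epsilon_0^2$ by induction), and then exactly as for $k=1$ one gets $0 \le f(y_k)-t_k \le \epsilon_0$ and $\mu(t_k,y_k) = \epsilon_0^2$; in particular $\nu(y_k) \le \mu(t_k,y_k) = \epsilon_0^2$. In case (b), $t_k := 2f(y_k) - t_{k-1}$ gives $f(y_k) - t_k = t_{k-1} - f(y_k) > 0$ by the case condition $f(y_k) < t_{k-1}$, hence $(f(y_k)-t_k)^2 = (f(y_k)-t_{k-1})^2$ and therefore $\mu(t_k,y_k) = \mu(t_{k-1},y_k) \le \mu(t_{k-1},y_{k-1}) \le \epsilon_0^2$; also $f(y_k) - t_k = t_{k-1}-f(y_k) \le t_{k-1} - t_k$, and combined with $\nu(y_k) \le \mu(t_k,y_k) \le \epsilon_0^2$ one extracts $0 \le f(y_k)-t_k \le \epsilon_0$ (the upper bound follows since $(f(y_k)-t_k)^2 \le \mu(t_k,y_k) \le \epsilon_0^2$). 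In case (c) the algorithm returns immediately with $t_k := t_{k-1}$, so $y_k$ is not an iterate that feeds a later outer step; still, $\mu(t_k,y_k) = \mu(t_{k-1},y_k) \ge (\delta\epsilon_0)^2$ and one checks $f(y_k) - t_k = f(y_k) - t_{k-1} \ge 0$ by the case condition, with the upper bound $\le \epsilon_0$ again from $(f(y_k)-t_k)^2 \le \mu(t_k,y_k) \le \epsilon_0^2$.

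\textbf{Properties \eqref{eq:propertyA} and \eqref{eq:propertyB}.} For $k \in A$ (case (a), or $k=1$), the computation above already shows $\mu(t_k,y_k) = \epsilon_0^2$. For the bound on $t_{k-1}-t_k$: in case (a) the termination-check condition that was \emph{not} met is $\mu(t_{k-1},y_k) < (\delta\epsilon_0)^2$, so $(f(y_k)-t_{k-1})^2 + \nu(y_k) < (\delta\epsilon_0)^2$, giving $|f(y_k)-t_{k-1}| < \delta\epsilon_0$; on the other hand $f(y_k) - t_k = (\epsilon_0^2 - \nu(y_k))^{1/2} \ge \sqrt{\epsilon_0^2 - (\delta\epsilon_0)^2}$\,; hmm, the cleaner route is $t_{k-1} - t_k = (f(y_k)-t_k) - (f(y_k)-t_{k-1}) \ge \epsilon_0\sqrt{1 - \delta^2}\cdot 0 + \dots$ --- actually the intended bound $(1-\delta)\epsilon_0$ comes from $f(y_k)-t_k \ge $ (something $\ge (1-\delta)\epsilon_0$ once $\nu(y_k)$ is controlled) minus $|f(y_k)-t_{k-1}| < \delta\epsilon_0$; since $\mu(t_{k-1},y_k)<(\delta\epsilon_0)^2$ forces $\nu(y_k)<(\delta\epsilon_0)^2$, we get $f(y_k)-t_k = (\epsilon_0^2-\nu(y_k))^{1/2} > \epsilon_0\sqrt{1-\delta^2} \ge \epsilon_0(1-\delta)$ when $\delta\in(0,1)$ (using $\sqrt{1-\delta^2}\ge 1-\delta$), wait that needs $f(y_k)-t_k \ge$ the full $\epsilon_0(1-\delta) + \delta\epsilon_0 = \epsilon_0$ --- I will instead follow the bookkeeping of \cite[Lem.3.1]{Cartis2018} verbatim, which yields exactly $t_{k-1}-t_k \ge (1-\delta)\epsilon_0$; the point is simply that $f(y_k)-t_{k-1}$ is small (bounded by $\delta\epsilon_0$) while $f(y_k)-t_k$ is close to $\epsilon_0$. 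For $k\in B$ (case (b)), the identity $\mu(t_k,y_k)=\mu(t_{k-1},y_k)$ was shown above, and $t_{k-1}>t_k$ is immediate from $t_k = 2f(y_k)-t_{k-1}$ together with the case condition $f(y_k) < t_{k-1}$, which gives $t_{k-1} - t_k = 2(t_{k-1}-f(y_k)) > 0$.

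\textbf{Property \eqref{eq:propertyK}.} This concerns the final outer index $k=K$, i.e.\ the iterate at which the algorithm returns via a \texttt{Return} inside the \texttt{For} loop. The only returns that occur at index $k\ge 2$ and assert $\boldsymbol\chi(\mu_{t_k},y_k)\le\boldsymbol\epsilon$ are the two nested \texttt{If} blocks in cases (a) and (b), each of which explicitly tests $\boldsymbol\chi(\mu_{t_k},y_k)\le\boldsymbol\epsilon$ before returning --- so that half is immediate by inspection. The claim $\mu(t_K,y_K)\ge(\delta\epsilon_0)^2$ is where the two cases split: in case (b) the outer \texttt{If} guard is $\mu(t_{k-1},y_k)\ge(\delta\epsilon_0)^2$, and we showed $\mu(t_K,y_K)=\mu(t_{K-1},y_K)$, so the bound transfers; in case (a) the outer guard is $\mu(t_{k-1},y_k)<(\delta\epsilon_0)^2$, but there $\mu(t_K,y_K)=\epsilon_0^2\ge(\delta\epsilon_0)^2$ since $\delta<1$. (A return via case (c), or the Phase~I return, is governed by the separate alternative \eqref{eq:cartisbad} of \Cref{thm:cartis} and is not what \eqref{eq:propertyK} is about.) I would present all of this as a short case analysis, citing \cite[Lem.3.1]{Cartis2018} for the few-line arithmetic estimates that are unchanged.

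\textbf{Main obstacle.} Nothing here is deep --- the content is entirely in correctly matching each displayed inequality to the branch of \Cref{alg:cartis} that produces it and keeping the signs straight in $f(y_k)-t_k$ across cases (a), (b), (c). The one spot that needs care is the $(1-\delta)\epsilon_0$ lower bound in \eqref{eq:propertyA}: it requires combining the \emph{failed} test $\mu(t_{k-1},y_k)\ge(\delta\epsilon_0)^2$ was \emph{not} satisfied (so $f(y_k)$ is within $\delta\epsilon_0$ of $t_{k-1}$) with the fresh definition of $t_k$ that places $f(y_k)$ at distance $(\epsilon_0^2-\nu(y_k))^{1/2}$ above $t_k$; getting the constant to come out as exactly $(1-\delta)\epsilon_0$ rather than $\epsilon_0\sqrt{1-\delta^2}$ is the only arithmetic subtlety, and it is handled exactly as in \cite[Lem.3.1]{Cartis2018}, to which I would defer.
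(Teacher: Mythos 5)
Your case analysis is the right one, and it is all the paper itself offers (the paper gives no proof, declaring it identical to \cite[Lem.3.1]{Cartis2018}); properties \eqref{eq:property0} and \eqref{eq:propertyB} and the identity $\mu(t_k,y_k)=\epsilon_0^2$ in \eqref{eq:propertyA} are handled correctly. The one step you visibly fail to close is the bound $t_{k-1}-t_k\ge(1-\delta)\epsilon_0$ in \eqref{eq:propertyA} for $k\ge 2$: using the two \emph{separate} consequences $|f(y_k)-t_{k-1}|<\delta\epsilon_0$ and $\nu(y_k)<(\delta\epsilon_0)^2$ of the case-(a) guard only yields $\epsilon_0(\sqrt{1-\delta^2}-\delta)$, which is strictly smaller than $(1-\delta)\epsilon_0$ (try $\delta=\tfrac12$), so that route genuinely does not work. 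The repair is to keep the guard as a joint constraint: $(f(y_k)-t_{k-1})^2\le(\delta\epsilon_0)^2-\nu(y_k)$, so writing $s:=\nu(y_k)$,
\[
t_{k-1}-t_k \,=\, \bigl(f(y_k)-t_k\bigr)-\bigl(f(y_k)-t_{k-1}\bigr)
\,\ge\, (\epsilon_0^2-s)^{1/2}-\bigl((\delta\epsilon_0)^2-s\bigr)^{1/2}
\,\ge\, (1-\delta)\,\epsilon_0,
\]
where the last inequality holds because $\sqrt{a-s}-\sqrt{b-s}=(a-b)/(\sqrt{a-s}+\sqrt{b-s})$ is nondecreasing in $s$ for $a\ge b\ge s\ge 0$, hence is at least its value $\sqrt{a}-\sqrt{b}=\epsilon_0-\delta\epsilon_0$ at $s=0$. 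For $k=1$, $t_0-t_1=(\epsilon_0^2-\nu(y_1))^{1/2}\ge\epsilon_0\sqrt{1-\delta^2}\ge(1-\delta)\epsilon_0$. So there is no need to defer to the reference.

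A second, smaller correction concerns \eqref{eq:propertyK}: you cannot set aside a case-(c) termination, because the paper invokes \eqref{eq:propertyK} for \emph{every} Phase-II return in the proof of \Cref{thm:cartis} (the case-(c) return is exactly the branch leading to \eqref{eq:cartisbad}, and it uses $\chi_1(\mu_{t},y)\le\epsilon_1$ there). The claim still holds in case (c), but for a different reason than in cases (a) and (b): since $t_K=t_{K-1}$ and $y_K$ is the terminal iterate of the inner method applied to $\mu_{t_{K-1}}=\mu_{t_K}$, the bound $\boldsymbol\chi(\mu_{t_K},y_K)\le\boldsymbol\epsilon$ is supplied by the inner-method guarantee \eqref{eq:inner} rather than by an explicit test, and $\mu(t_K,y_K)=\mu(t_{K-1},y_K)\ge(\delta\epsilon_0)^2$ comes from the guard. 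With these two repairs your argument is complete.
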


Let $(y,t)$ be the output of \Cref{alg:cartis},
and let us show~\eqref{eq:cartis}.
Assume first that the algorithm terminates in Phase~I.
Then $y$ is a local minimum of~$\nu$,
$\nu(y) \!>\! (\delta \epsilon_0)^2$,
and $t \!=\! f(y)$.
Hence \eqref{eq:cartisbad} holds.
Assume now that the algorithm terminates in Phase~II.
By \eqref{eq:property0} and \eqref{eq:propertyK},
we have
\begin{align*}
t \leq f(y),\qquad
(\delta\epsilon_0)^2 \leq \mu_t(y) \!\leq\! \epsilon_0^2,\qquad
\boldsymbol\chi(\mu_t,y) \leq \boldsymbol\epsilon.
\end{align*}
If $f(y) \!<\! t$ then $\|h(y)\| \!\leq\! \sqrt{\mu_t(y)} \!\leq\! \epsilon_0$,
so \eqref{eq:cartisgood} holds.
Consider now the case that $f(y)\!=\!t$.
Note that $\mu_t(y) \!=\! \nu(y)$, $\nabla \mu_t(y) \!=\! \nabla \nu(y)$.
Then $\chi_1(\mu_t,y) \!=\! \chi_1(\nu,y)$,
as they only involve derivatives up to order~1.
Since $\|h(y)\| \!=\! \sqrt{\mu_t(y)} \!\geq\! \delta \epsilon_0$,
then \eqref{eq:cartisbad} holds.

We proceed to show that the number of inner iterations is bounded by~\eqref{eq:numiters}.
Each outer iteration $k$ of \Cref{alg:cartis} calls the inner method once.
Let $N_k$ be the number of inner iterations made in this call.
The total number of inner iterations is
$ \sum\nolimits_{k=1}^K N_k.$

We first analyze Phase~I.
The inner method is applied to the problem $\min_y \nu(y)$,
starting with $y_0$ and terminating with $y_1$.
By \eqref{eq:inner}, we have
\begin{align*}
  \nu(y_0)
  \,\geq\,
  \nu(y_0) - \nu(y_1)
  \,\geq\,
  N_1\, \kappa_\nu\, p(\boldsymbol\epsilon).
\end{align*}
It follows that
$N_1 \leq \nu(y_0) /\kappa_\nu p(\epsilon)$.

We proceed to Phase~II.
For each $a \in A$, let $n(a)$ be the next integer that lies in $A$.
For the largest $a \in A$ we define $n(a) := K$,
where $K$ is the final iteration.
We can group the indices $k \geq 2$ as follows:
\begin{align*}
  \{2,3,\dots,K\} = \bigcup_{a \in A} K_a,
  \quad\text{ where }\quad
  K_a := \{a{+}1,a{+}2, \dots , n(a)\}.
\end{align*}

We will show that for any $a\in A$ we have that
\begin{align}\label{eq:boundKa}
  N(K_a)
  \,:=\,
  \sum_{k \in K_a} N_k
  \,\leq\,
  \epsilon_0^2 /\kappa_\mu p(\epsilon).
\end{align}
Consider an iteration $k \in K_a$.
The inner method is applied to $\min_y \mu(t_{k-1},y)$,
starting with $y_{k-1}$ and terminating with $y_k$.
By~\eqref{eq:inner}, we have
\begin{align*}
  \mu(t_{k-1},y_{k-1}) - \mu(t_{k-1},y_k)
  \,\geq\,
  N_k\, \kappa_\mu\, p(\boldsymbol\epsilon).
\end{align*}
Observe that $K_a \setminus \{n(a)\} \subset B$.
By \eqref{eq:propertyB}, we have
\begin{align*}
  \mu(t_{k-1},y_k) = \mu(t_{k},y_k)
  \quad\text{ for }\quad
  k \in K_a\setminus \{n(a)\}.
\end{align*}
Also note that $\mu(t_{a},y_a) = \epsilon_0^2$ by \eqref{eq:propertyA}.
Therefore,
\begin{align*}
  \epsilon_0^2
  \,&\geq\,
  \mu(t_{a},y_{a}) - \mu(t_{n(a)}{-}1,y_{n(a)})\\
  &=\,
  \sum_{k \in K_a} \mu(t_{k-1},y_{k-1}) - \mu(t_{k-1},y_k)
  \,\geq\,
  \sum_{k \in K_a} N_k \,\kappa_\mu\, p(\boldsymbol\epsilon).
\end{align*}
By rearranging the above inequality we get~\eqref{eq:boundKa}.

Let us now upper bound the cardinality of~$A$.
By~\eqref{eq:propertyA} and~\eqref{eq:propertyB} we have that
$t_{k-1} \!-\! t_{k}$ is at least $(1{-}\delta)\epsilon_0$ for $k\in A$,
and is positive for $k \in B$.
Also note that $t_0 \!=\! f(y_1)$
and $t_K \!\geq\! f(y_K) {-} \epsilon_0 \!\geq\! f_{\mathrm{low}} {-} \beta$
by~\eqref{eq:property0}.
Then
\begin{align*}
  f(y_1) {-} f_{\mathrm{\mathrm{low}}} {+} \beta
  \,\geq\,
  t_0 {-} t_{K}
  \,=\,
  \sum_{k=1}^K (t_{k-1} {-} t_{k})
  \,\geq\,
  \sum_{k\in A} (t_{k-1} {-} t_{k})
  \,\geq\,
  |A| \, (1{-}\delta)\epsilon_0,
\end{align*}
and hence
$|A| \leq (f(y_1){-}f_{\mathrm{\mathrm{low}}}{+}\beta)/(1{-}\delta)\epsilon_0$.

Combining everything, we derive
\[
  \sum_{k=1}^K N_k
  \,\leq\,
  N_1 + |A| \cdot \max_{a\in A} N(K_a)
  \,\leq\,
  \frac{\nu(y_0)}{\kappa_\nu p(\epsilon)}
  + \frac{f(y_1){-}f_{\mathrm{\mathrm{low}}}{+}\beta}{(1{-}\delta)\epsilon_0}
  \cdot \frac{\epsilon_0^2}{\kappa_\mu\, p(\epsilon)},
  \qedhere
\]
which is equal to~\eqref{eq:numiters}.

\subsection{Proof of \Cref{thm:complexity}}

We finally show that AFAC points can be computed in polynomial time.
Let
$\varepsilon_0,\varepsilon_1,\varepsilon_2,\gamma,R_\lambda$
be as in the statement of \Cref{thm:complexity}.
We consider \Cref{alg:cartis} with parameters
\begin{gather*}
  \delta \,:=\, 1/2,
  \qquad
  q \,:=\, 2,
  \qquad
  \boldsymbol\epsilon \,:=\, (\epsilon_1,\epsilon_2),
  \\
  \epsilon_0 \,:=\, \varepsilon_0,
  \quad
  \epsilon_1 \,:=\, R_\lambda^{-1}\, \varepsilon_0 \,\varepsilon_1,
  \quad
  \epsilon_2 \,:=\, \tfrac{1}{2} R_\lambda^{-1}\, \varepsilon_0\, \varepsilon_2.
\end{gather*}
For the inner method we use the ARC algorithm from \Cref{thm:complexityunconstrained},
using the criticality measure~\eqref{eq:criticality}.
\Cref{alg:cartis} returns a pair $(y,t)$.
The associated multiplier is
$\lambda := (f(y){-}t)^{-1}\, h(y) \in \RR^m$,
which is defined only if $f(y)\neq t$.

In order to apply \Cref{thm:cartis},
we have to check that the functions $\nu$ and $\mu_t \!=\! \mu(t,\cdot)$ are smooth enough
so that the inner algorithm satisfies~\eqref{eq:inner}.

\begin{lemma}[{\cite[Lem.4.1]{Cartis2018}}] \label{thm:lipchitz}
  Assume that $\{\nabla^j f\}_{j=0}^q, \{\nabla^j h\}_{j=0}^q$ are uniformly bounded and Lipschitz continuous on a set~$D \subset \RR^n$.
  Then
  \begin{enumerate}[label=(\roman*)]
    \item $\{\nabla^j \nu\}_{j=0}^q$ are uniformly bounded and Lipschitz continuous on~$D$.
    \item $\{\nabla^j \mu_t\}_{j=0}^q$ are uniformly bounded and Lipschitz continuous on $D \cap\nobreak B_{t}$,
      with $B_{t}:= \{y: |f(y){-}t| {\leq} 1\}$,
      and the constants are independent of~$t$.
  \end{enumerate}
\end{lemma}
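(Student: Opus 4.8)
The plan is to reduce the entire statement to a single elementary closure property and then apply the Leibniz rule. First I would record the following observation: fix any set $S\subset\RR^n$; if $g_1,\dots,g_k$ are tensor-valued maps on $S$ that are each uniformly bounded and Lipschitz continuous on $S$, then every finite $\RR$-linear combination and every tensor (or pointwise) product of the $g_i$ is again uniformly bounded and Lipschitz on $S$, with the resulting bound and Lipschitz constant controlled by a fixed polynomial in the bounds and Lipschitz constants of the inputs. Linear combinations are immediate; for a product one uses $g_1(y)g_2(y)-g_1(y')g_2(y')=g_1(y)\bigl(g_2(y)-g_2(y')\bigr)+\bigl(g_1(y)-g_1(y')\bigr)g_2(y')$, which gives Lipschitz constant at most $\sup_S\|g_1\|\cdot L_{g_2}+\sup_S\|g_2\|\cdot L_{g_1}$, and then one iterates. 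This argument only ever compares two points of $S$, so no convexity or connectedness of $S$ is needed.

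For part~(i) I would apply the Leibniz rule to $\nu=\sum_{i=1}^m h_i^2$: for each $0\le j\le q$ the derivative $\nabla^j\nu$ is a finite linear combination of symmetrized tensor products $\nabla^a h_i\odot\nabla^{j-a}h_i$ with $0\le a\le j\le q$. Since each factor $\nabla^a h_i$ with $a\le q$ is uniformly bounded and Lipschitz on $D$ by hypothesis, the closure property immediately yields that $\nabla^j\nu$ is uniformly bounded and Lipschitz on~$D$.

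For part~(ii) I would split $\mu_t=(f-t)^2+\nu$, deal with $\nu$ by~(i), and focus on $g_t:=f-t$. Since $\nabla g_t=\nabla f$, the Leibniz rule gives, for $1\le j\le q$,
\begin{align*}
  \nabla^j\bigl((f-t)^2\bigr)\,=\,2\,g_t\,\nabla^j f\;+\;\sum_{a=1}^{j-1}\binom{j}{a}\,\nabla^a f\odot\nabla^{j-a}f,
\end{align*}
while for $j=0$ it is simply $g_t^2$. On the slab $B_t=\{y:|f(y)-t|\le 1\}$ the scalar $g_t$ obeys $|g_t|\le 1$ by definition, and $g_t$ is Lipschitz with constant $\sup_D\|\nabla f\|$; both of these are independent of~$t$. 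The remaining factors $\nabla^a f$ ($1\le a\le q$) and $\nabla^a h_i$ ($0\le a\le q$) are uniformly bounded and Lipschitz on~$D$, with $t$-independent constants. Applying the closure property with $S:=D\cap B_t$ then shows each $\nabla^j\mu_t$, $0\le j\le q$, is uniformly bounded and Lipschitz on $D\cap B_t$, with constants independent of~$t$.

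The only genuinely delicate point---and the reason the domain in~(ii) is cut down to $D\cap B_t$ rather than $D$---is precisely the $t$-independence: off of $B_t$ the function $f-t$ admits no uniform bound, whereas on $B_t$ the bound $|f-t|\le 1$ is automatic and $t$-free. Everything else is bookkeeping of Leibniz coefficients, which does not affect the qualitative conclusion, so I would not carry it out in detail.
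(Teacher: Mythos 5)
The paper states this lemma only by citation to Cartis et al.\ (their Lemma~4.1) and provides no proof of its own, so there is no internal argument to compare yours against; I will therefore review your proof on its merits.

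Your strategy---a one-line closure property for sums and tensor products of uniformly bounded Lipschitz maps, followed by the Leibniz rule applied to $\nu=\sum_i h_i^2$ and to $(f-t)^2$---is correct and gives exactly the $t$-independence the lemma requires, because on $B_t$ the only $t$-dependent factor $g_t=f-t$ is trapped in $[-1,1]$ while all other factors are controlled by hypotheses on~$D$ alone. One small inconsistency is worth repairing: you carefully point out that the closure property needs no convexity or connectedness of the domain, but then assert that $g_t$ is Lipschitz ``with constant $\sup_D\|\nabla f\|$,'' which on a disconnected or nonconvex $D$ would itself rest on a mean-value-type path argument you just declined to use. The fix is one line: the hypothesis already includes $\nabla^0 f=f$ among the maps assumed uniformly bounded and Lipschitz on $D$, so $f-t$ is Lipschitz on $D$ (hence on $D\cap B_t$) with $f$'s Lipschitz constant $L_0$, which is manifestly $t$-free. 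With that substitution the argument is complete and self-contained.
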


The above lemma shows that $\nu$ is smooth on~$\mathfrak{M}_\beta$
and $\mu_t$ is smooth on~$\mathfrak{M}_\beta \cap\nobreak B_t$,
with $B_{t}:= \{y: |f(y){-}t| {\leq} 1\}$.
Note that all points $y_k$ produced by \Cref{alg:cartis} lie in $\mathfrak{M}_\beta \cap B_t$ because of~\eqref{eq:property0}.
Since $\nu, \mu_t$ are sufficiently smooth,
we can apply \Cref{thm:complexityunconstrained} (see also~\cite{Cartis2011}).
We conclude that the inner method satisfies~\eqref{eq:inner} with
\begin{align*}
  p(\boldsymbol\epsilon)
  \,=\,
  \min\{\epsilon_1^{2},\,\epsilon_2^{3}\}
  \,=\,
  \Omega(\min\{\varepsilon_0^2 \,\varepsilon_1^2,\, \varepsilon_0^3 \,\varepsilon_2^3\}).
\end{align*}
Hence, by \Cref{thm:cartis},
the total number of inner iterations is
$
  O(p(\boldsymbol\epsilon)^{-1}) =
  O(\max\{\varepsilon_0^{-2} \,\varepsilon_1^{-2},
  \varepsilon_0^{-3} \,\varepsilon_2^{-3}\}).
$
Since each inner iteration requires $O(1)$ function evaluations (see \Cref{thm:complexityunconstrained}),
then the total number of function evaluations has the same order of magnitude.

Let us see that the conditions~\eqref{eq:conditionsKKTapprox} hold.
Let $(y,t)$ be the output of \Cref{alg:cartis}.
By \Cref{thm:cartis}, this pair satisfies either \eqref{eq:cartisgood} or~\eqref{eq:cartisbad}.
Let us see that~\eqref{eq:cartisbad} cannot occur.
Assume that
\begin{align*}
  \|h(y)\| > \epsilon_0/2,
  \qquad
  \|\nabla \nu(y)\| \leq \epsilon_1.
\end{align*}
Observe that $\|h(y)\| \!\leq\! \epsilon_0 \!\leq\! \beta$ by \eqref{eq:property0},
and hence $\varrho$-LICQ holds at~$y$.
Then
\begin{align*}
  \varrho\, \epsilon_0
  \,<\,
  2\, \varrho \|h(y)\|
  \,\leq\,
  2\, \|h(y)^{\!T} \nabla h(y)\|
  \,=\,
  \|\nabla \nu(y)\|
  \,\leq\,
  \epsilon_1.
\end{align*}
Also note that that
\begin{align*}
  R_\lambda^{-1}
  \,\leq\,
  \tfrac{1}{2} \varrho(1{+}L_f)^{-1}
  \,\leq\,
  \tfrac{1}{2} \varrho,
  \qquad
  \epsilon_1/\epsilon_0
  \,=\,
  R_\lambda^{-1}\, \varepsilon_1
  \,\leq\,
  \tfrac{1}{2}\varrho\, \varepsilon_1
  \,\leq\,
  \tfrac{1}{2}\varrho.
\end{align*}
The last two equations give a contradiction.

Then the output $(y,t)$ satisfies~\eqref{eq:cartisgood}.
Hence, $t < f(y)$ and
\begin{align*}
  \|h(y)\| \,\leq\, \epsilon_0,
  \qquad
  \|\nabla \mu_t(y)\| \,\leq\, \epsilon_1,
  \qquad
  \nabla^2 \mu_t(y) \,\succeq\, -\epsilon_2 \id_n.
\end{align*}
Let
$ \alpha := (f(y){-}t)^{-1} $,
so that
$ \lambda = \alpha\, h(y). $
It can be checked that
$\alpha^2 \mu_t(y) = \|(1,\lambda)\|^2$.
Note that $\mu_t(y) \geq (\epsilon_0/2)^2$ by \eqref{eq:propertyK},
and hence
\begin{align*}
  \alpha
  \,=\,
  \mu_t(y)^{-1/2}\,\|(1,\lambda)\|
  \,\leq\,
  2\, \varepsilon_0^{-1} \|(1,\lambda)\|.
\end{align*}
The Lagrangian function
$L(y,\lambda) \!=\! f(y) \!+\! \lambda \!\cdot\! h(x)$
is closely related to~$\mu_t(y)$.
A simple calculation gives that
\begin{align}\label{eq:Lagmu}
  \nabla \Lag(y,\lambda)
  \,=\, \alpha \cdot \tfrac{1}{2} \nabla\mu_t(y),
  \qquad
  \nabla^2 \Lag(y,\lambda)
  \,=\, \alpha\,\bigl(\tfrac{1}{2}\nabla^2 \mu_t(y)
  \!-\! \tilde{J}^{\,T} \tilde{J} \bigr),
\end{align}
where
$\tilde J := \left(\begin{smallmatrix} \nabla f(y) \\ \nabla h(y) \end{smallmatrix}\right)$
is the augmented Jacobian.

We proceed to verify~\eqref{eq:firstorderapprox}.
We already have that
$ \|h(y)\| \leq \varepsilon_0.$
Note that
\begin{gather}\label{eq:nablaLag}
  \|\nabla \Lag(y,\lambda) \|
  \,=\,
  \tfrac{1}{2}\, \alpha\, \|\nabla \mu_t(y)\|
  \,\leq\, \varepsilon_0^{-1} \|(1,\lambda)\| \,\epsilon_1\,
  \,=\, R_\lambda^{-1} \|(1,\lambda)\|\, \varepsilon_1.
\end{gather}
We claim that $\|(1,\lambda)\| \!\leq\! R_\lambda$.
By \Cref{thm:lambda} and
$R_\lambda^{-1} \!\leq\! \varrho/2$, $\varepsilon_1 \!\leq\! 1$,
we have
\begin{gather*}
  \|\lambda\|
  \,\leq\,
  \varrho^{-1}\,(R_\lambda^{-1}\, \varepsilon_1\|(1,\lambda)\| + \|\nabla f(y)\|)
  \,\leq\,
  \tfrac{1}{2}(1{+}\|\lambda\|) + \varrho^{-1} L_f.
\end{gather*}
It follows that
$\|\lambda\| \leq 1 \!+\! 2 \varrho^{-1} L_f = R_\lambda \!-\! 1$
and hence $\|(1,\lambda)\| \leq R_\lambda$,
as we claimed.
Then $\|\nabla \Lag(y,\lambda)\| \leq \varepsilon_1$ by~\eqref{eq:nablaLag}.

We now verify~\eqref{eq:secondorderapprox}.
Let $u \!\in\! \RR^n$ of unit norm such that $\|J u\| \!\leq\! \gamma$,
where $J \!:=\! \nabla h(y)$.
We need to show that $u^T \nabla^2 \Lag(y,\lambda) u \geq -\varepsilon_2$.
By \eqref{eq:Lagmu}, we have
\begin{align}\label{eq:nabla2Lag}
  u^T \nabla^2 \Lag(y,\lambda) u
  \,=\,
  \alpha\,(\tfrac{1}{2} u^T \nabla^2 \mu_t(y) u - \|\tilde{J} u\|^2).
\end{align}
Note that
$u^T \nabla^2 \mu_t(y) u
\geq -\epsilon_2
= -\tfrac{1}{2} R_\lambda^{-1} \varepsilon_0 \varepsilon_2$.
We bound $\|\tilde J u\|$ next:
\begin{gather*}
  \tilde J
  \,=\,
  \begin{pmatrix} \nabla f(y) \\ \nabla h(y) \end{pmatrix}
  \,=\,
  \begin{pmatrix} \nabla \Lag(y,\lambda) \\ 0 \end{pmatrix}
  + \begin{pmatrix} -\lambda^T J \\ J \end{pmatrix},
  \\
  \|\tilde J u \|
  \,\leq\,
  \|\nabla \Lag(y,\lambda)\|
  + \|(1,\lambda)\|\, \|J u\|
  \,\leq\,
  \varepsilon_1 + \gamma \, \|(1,\lambda)\|
  \,\leq\,
  \tfrac{1}{2} (R_\lambda^{-1} \,\varepsilon_0 \,\varepsilon_2)^{1/2},
\end{gather*}
where we used that $\varepsilon_1$ and $\gamma R_\lambda$
are at most $\tfrac{1}{4} (R_\lambda^{-1} \varepsilon_0 \varepsilon_2)^{1/2} $
by~\eqref{eq:epsilonineqs}.
Hence
\begin{align*}
  \alpha\,(\tfrac{1}{2} u^T \nabla^2 \mu_t(y) u - \|\tilde{J} u\|^2)
  \,\geq\,
  -(2\varepsilon_0^{-1} \|(1,\lambda)\|) \cdot
  (\tfrac{1}{2} R_\lambda^{-1} \varepsilon_0\, \varepsilon_2)
  \,\geq
  -\varepsilon_2.
\end{align*}
Together with~\eqref{eq:nabla2Lag},
we get that $u^T \nabla^2 L(y,\lambda) u \geq -\varepsilon_2$.

\bibliographystyle{abbrv}
\bibliography{../arxiv/refs.bib}

\begin{thebibliography}{10}

\bibitem{Alizadeh1997}
F.~Alizadeh, J.-P.~A. Haeberly, and M.~L. Overton.
\newblock Complementarity and nondegeneracy in semidefinite programming.
\newblock {\em Mathematical programming}, 77(1):111--128, 1997.

\bibitem{Andreani2010}
R.~Andreani, E.~G. Birgin, J.~M. Mart{\'\i}nez, and M.~L. Schuverdt.
\newblock Second-order negative-curvature methods for box-constrained and
  general constrained optimization.
\newblock {\em Comput. Optim. Appl.}, 45(2):209--236, 2010.

\bibitem{Bandeira2016}
A.~S. Bandeira, N.~Boumal, and V.~Voroninski.
\newblock On the low-rank approach for semidefinite programs arising in
  synchronization and community detection.
\newblock In {\em Proc. Conf. Learn. Theory}, pages 361--382, 2016.

\bibitem{Barvinok1995problems}
A.~I. Barvinok.
\newblock Problems of distance geometry and convex properties of quadratic
  maps.
\newblock {\em Discrete Comput. Geom.}, 13(2):189--202, 1995.

\bibitem{Basu2021}
S.~Basu and A.~Lerario.
\newblock Hausdorff approximations and volume of tubes of singular algebraic
  sets.
\newblock {\em arXiv:2104.05053}, 2021.

\bibitem{Bhojanapalli2018}
S.~Bhojanapalli, N.~Boumal, P.~Jain, and P.~Netrapalli.
\newblock Smoothed analysis for low-rank solutions to semidefinite programs in
  quadratic penalty form.
\newblock In {\em Conf. Learn. Theory}, pages 3243--3270, 2018.

\bibitem{Bhojanapalli2016}
S.~Bhojanapalli, B.~Neyshabur, and N.~Srebro.
\newblock Global optimality of local search for low rank matrix recovery.
\newblock In {\em Adv. Neural Inf. Process. Syst.}, pages 3873--3881, 2016.

\bibitem{Birgin2016}
E.~G. Birgin, J.~Gardenghi, J.~M. Martinez, S.~A. Santos, and P.~L. Toint.
\newblock Evaluation complexity for nonlinear constrained optimization using
  unscaled {KKT} conditions and high-order models.
\newblock {\em SIAM J. Optim.}, 26(2):951--967, 2016.

\bibitem{Birgin2008}
E.~G. Birgin and J.~M. Mart{\'\i}nez.
\newblock Improving ultimate convergence of an augmented {L}agrangian method.
\newblock {\em Optim. Methods Software}, 23(2):177--195, 2008.

\bibitem{Boumal2016}
N.~Boumal, V.~Voroninski, and A.~Bandeira.
\newblock The non-convex {B}urer-{M}onteiro approach works on smooth
  semidefinite programs.
\newblock In {\em Adv. Neural Inf. Process. Syst.}, pages 2757--2765, 2016.

\bibitem{Boumal2018}
N.~Boumal, V.~Voroninski, and A.~Bandeira.
\newblock Deterministic guarantees for {B}urer-{M}onteiro factorizations of
  smooth semidefinite programs.
\newblock {\em Commun. Pure Appl. Math.}, 2019.

\bibitem{Burer2003}
S.~Burer and R.~D. Monteiro.
\newblock A nonlinear programming algorithm for solving semidefinite programs
  via low-rank factorization.
\newblock {\em Math. Program.}, 95(2):329--357, 2003.

\bibitem{Burer2005}
S.~Burer and R.~D. Monteiro.
\newblock Local minima and convergence in low-rank semidefinite programming.
\newblock {\em Math. Program.}, 103(3):427--444, 2005.

\bibitem{Burgisser2008}
P.~B{\"u}rgisser, F.~Cucker, and M.~Lotz.
\newblock The probability that a slightly perturbed numerical analysis problem
  is difficult.
\newblock {\em Math. Comput.}, 77(263):1559--1583, 2008.

\bibitem{Cartis2011}
C.~Cartis, N.~I. Gould, and P.~L. Toint.
\newblock Adaptive cubic regularisation methods for unconstrained optimization.
  {P}art {II}: worst-case function-and derivative-evaluation complexity.
\newblock {\em Math. Program.}, 130(2):295--319, 2011.

\bibitem{Cartis2012}
C.~Cartis, N.~I. Gould, and P.~L. Toint.
\newblock Complexity bounds for second-order optimality in unconstrained
  optimization.
\newblock {\em J. Complexity}, 28(1):93--108, 2012.

\bibitem{Cartis2013}
C.~Cartis, N.~I. Gould, and P.~L. Toint.
\newblock On the evaluation complexity of cubic regularization methods for
  potentially rank-deficient nonlinear least-squares problems and its relevance
  to constrained nonlinear optimization.
\newblock {\em SIAM J. Optim.}, 23(3):1553--1574, 2013.

\bibitem{Cartis2014}
C.~Cartis, N.~I. Gould, and P.~L. Toint.
\newblock On the complexity of finding first-order critical points in
  constrained nonlinear optimization.
\newblock {\em Math. Program.}, 144(1-2):93--106, 2014.

\bibitem{Cartis2018}
C.~Cartis, N.~I. Gould, and P.~L. Toint.
\newblock Optimality of orders one to three and beyond: characterization and
  evaluation complexity in constrained nonconvex optimization.
\newblock {\em J. Complexity}, 2018.

\bibitem{Chan2008}
Z.~X. Chan and D.~Sun.
\newblock Constraint nondegeneracy, strong regularity, and nonsingularity in
  semidefinite programming.
\newblock {\em SIAM Journal on optimization}, 19(1):370--396, 2008.

\bibitem{Cifuentes2019}
D.~Cifuentes.
\newblock On the {B}urer--{M}onteiro method for general semidefinite programs.
\newblock {\em Optimization Letters}, pages 1--11, 2021.

\bibitem{Conn2000}
A.~R. Conn, N.~I. Gould, and P.~L. Toint.
\newblock {\em Trust region methods}, volume~1 of {\em MOS-SIAM Series Optim.}
\newblock SIAM, 2000.

\bibitem{Curtis2018}
F.~E. Curtis, D.~P. Robinson, and M.~Samadi.
\newblock Complexity analysis of a trust funnel algorithm for equality
  constrained optimization.
\newblock {\em SIAM J. Optim.}, 28(2):1533--1563, 2018.

\bibitem{Ge2017}
R.~Ge, C.~Jin, and Y.~Zheng.
\newblock No spurious local minima in nonconvex low rank problems: A unified
  geometric analysis.
\newblock In {\em Proc. Int. Conf. Mach. Learn.}, pages 1233--1242, 2017.

\bibitem{Gray2012}
A.~Gray.
\newblock {\em Tubes}, volume 221.
\newblock Birkh{\"a}user, 2012.

\bibitem{nlopt}
S.~G. Johnson.
\newblock The {NL}opt nonlinear-optimization package.
\newblock Available at \url{http://github.com/stevengj/nlopt}.

\bibitem{Journee2010}
M.~Journ\'ee, F.~Bach, P.-A. Absil, and R.~Sepulchre.
\newblock Low-rank optimization on the cone of positive semidefinite matrices.
\newblock {\em SIAM J. Optim.}, 20(5):2327--2351, 2010.

\bibitem{Li2017}
Q.~Li and G.~Tang.
\newblock The nonconvex geometry of low-rank matrix optimizations with general
  objective functions.
\newblock In {\em IEEE Global Conf. Signal Inf. Process.}, pages 1235--1239,
  2017.

\bibitem{Lotz2015}
M.~Lotz.
\newblock On the volume of tubular neighborhoods of real algebraic varieties.
\newblock {\em Proc. Am. Math. Soc.}, 143(5):1875--1889, 2015.

\bibitem{Mei2017}
S.~Mei, T.~Misiakiewicz, A.~Montanari, and R.~I. Oliveira.
\newblock Solving {SDP}s for synchronization and maxcut problems via the
  {G}rothendieck inequality.
\newblock In {\em Proc. Conf. Learn. Theory}, pages 1476--1515, 2017.

\bibitem{Pataki1998}
G.~Pataki.
\newblock On the rank of extreme matrices in semidefinite programs and the
  multiplicity of optimal eigenvalues.
\newblock {\em Math. Oper. Res.}, 23(2):339--358, 1998.

\bibitem{Pumir2018}
T.~Pumir, S.~Jelassi, and N.~Boumal.
\newblock Smoothed analysis of the low-rank approach for smooth semidefinite
  programs.
\newblock In {\em Adv. Neural Inf. Process. Syst.}, pages 2287--2296, 2018.

\bibitem{Sahin2019}
M.~F. Sahin, A.~Eftekhari, A.~Alacaoglu, F.~Latorre, and V.~Cevher.
\newblock An inexact augmented {L}agrangian framework for nonconvex
  optimization with nonlinear constraints.
\newblock {\em Proc. Neural Inf. Process. Syst.}, 32:13965--13977, 2019.

\bibitem{Spielman2004}
D.~A. Spielman and S.-H. Teng.
\newblock Smoothed analysis of algorithms: {W}hy the simplex algorithm usually
  takes polynomial time.
\newblock {\em JACM}, 51(3):385--463, 2004.

\bibitem{Sturm2000}
J.~F. Sturm.
\newblock Error bounds for linear matrix inequalities.
\newblock {\em SIAM J. Optim.}, 10(4):1228--1248, 2000.

\bibitem{Waldspurger2018}
I.~Waldspurger and A.~Waters.
\newblock Rank optimality for the {B}urer--{M}onteiro factorization.
\newblock {\em SIAM Journal on Optimization}, 30(3):2577--2602, 2020.

\bibitem{Zhang2018}
R.~Zhang, C.~Josz, S.~Sojoudi, and J.~Lavaei.
\newblock How much restricted isometry is needed in nonconvex matrix recovery?
\newblock In {\em Adv. Neural Inf. Process. Syst.}, pages 5586--5597, 2018.

\end{thebibliography}

\end{document}